\let\svthefootnote\thefootnote
\newtheorem{Proposition}{Proposition}[section]
\newtheorem{Theorem}[Proposition]{Theorem}
\newtheorem{Lemma}[Proposition]{Lemma}
\newtheorem{cor}[Proposition]{Corollary}
\newtheorem{Problem}[Proposition]{Problem}
\newtheorem{Conjecture}[Proposition]{Conjecture}
\newtheorem{remark}[Proposition]{Remark}
\numberwithin{equation}{section}\allowdisplaybreaks
\newcommand{\shift}{\operatorname{shift}}
\newcommand{\supp}{\mathrm{supp}}
\begin{document}
\title[Mean dimension and an embedding theorem for real flows]
{Mean dimension and an embedding theorem\\for real flows}
\author[Y. Gutman]{Yonatan Gutman}
\address{Yonatan Gutman:
Institute of Mathematics, Polish Academy of Sciences, ul. \'Sniadeckich 8, 00-656 Warszawa, Poland}
\email{y.gutman@impan.pl}
\author[L. Jin]{Lei Jin $^*$}\let\thefootnote\relax\footnote{* Corresponding author.}
\addtocounter{footnote}{-1}\let\thefootnote\svthefootnote
\address{Lei Jin:
Center for Mathematical Modeling, University of Chile and UMI 2807 - CNRS}
\email{jinleim@mail.ustc.edu.cn}
\subjclass[2010]{37B05; 54H20.}\keywords{Mean dimension, real flow, equivariant embedding, band-limited function}\begin{abstract}
We develop mean dimension theory for $\mathbb{R}$-flows.
We obtain fundamental properties and examples and prove an embedding theorem:
Any real flow $(X,\mathbb{R})$ of mean dimension strictly less than $r$ admits
an extension $(Y,\mathbb{R})$ whose mean dimension is equal to that of $(X,\mathbb{R})$
and such that $(Y,\mathbb{R})$ can be embedded in the $\mathbb{R}$-shift on the compact function space
$\{f\in C(\mathbb{R},[-1,1])|\;\supp(\hat{f})\subset [-r,r]\}$, where $\hat{f}$ is the Fourier transform of $f$ considered as a tempered distribution. These canonical embedding spaces appeared previously as a tool in embedding results for $\mathbb{Z}$-actions.
\end{abstract}\maketitle\section{Introduction}\label{sec:intro}
Mean dimension was introduced by Gromov \cite{Grom} in 1999,
and was systematically studied by Lindenstrauss and Weiss \cite{LW}
as an invariant of topological dynamical systems (t.d.s).
In recent years it has extensively been investigated with relation to the so-called
embedding problem, mainly for $\mathbb{Z}^k$-actions ($k\in\mathbb{N}$).
For $\mathbb{Z}$-actions, the problem is which $\mathbb{Z}$-actions $(X,T)$ can be embedded in the shifts on the Hilbert cubes
$(([0,1]^N)^\mathbb{Z},\sigma)$, where $N$ is a natural number and the shift $\sigma$ acts on $([0,1]^N)^\mathbb{Z}$ by
$\sigma((x_n)_{n\in\mathbb{Z}})=(x_{n+1})_{n\in\mathbb{Z}}$ for $x_n\in [0,1]^N$.
Under the conditions that $X$ has finite Lebesgue covering dimension and the system
$(X,T)$ is aperiodic, Jaworski \cite{J} proved in 1974 that
$(X,T)$ can be embedded in the shift on $[0,1]^\mathbb{Z}$.
Using Fourier and complex analysis, Gutman and Tsukamoto showed that
if $(X,T)$ is minimal and has mean dimension strictly less than $N/2$
then it can be embedded in $(([0,1]^N)^\mathbb{Z},\sigma)$ (see a more general result in \cite{GQT}).
We note that the value $N/2$ is optimal since
a minimal system of mean dimension
$N/2$ which cannot be embedded in $(([0,1]^N)^\mathbb{Z},\sigma)$
was constructed in \cite[Theorem 1.3]{LT}. More references for the embedding problem are given in
\cite{A,K,L,G11,G,GT14,GLT,G16,G17,GQS}.

In this paper, we develop the mean dimension theory for $\mathbb{R}$-actions
and investigate the embedding problem in this context.
Throughout this paper, by a {\bf flow}
we mean a pair $(X,\mathbb{R})$, where $X$ is a compact metric space
and $\Gamma:\mathbb{R}\times X\to X,(r,x)\mapsto rx$ is a continuous map
such that $\Gamma(0,x)=x$ and $\Gamma(r_1,\Gamma(r_2,x))=\Gamma(r_1+r_2,x)$
for all $r_1,r_2\in\mathbb{R}$ and $x\in X$.
Let $(X,\mathbb{R})=(X,(\varphi_r)_{r\in\mathbb{R}})$ and
$(Y,\mathbb{R})=(Y,(\phi_r)_{r\in\mathbb{R}})$ be flows.
We say that $(Y,\mathbb{R})$ can be {\bf embedded} in $(X,\mathbb{R})$ if there is
an $\mathbb{R}$-equivariant homeomorphism of $Y$ onto a subspace of $X$; namely,
there is a homeomorphism $f:Y\to f(Y)\subset X$ such that
$f\circ\phi_r=\varphi_r\circ f$ for all $r\in\mathbb{R}$.

This paper is organized as follows:
In Section \ref{sec:mdim}, we present basic notions and properties
of mean dimension theory for flows. In Section \ref{sec:construction} we construct minimal real flows with arbitrary mean dimension. In Section \ref{sec:conj}, we propose an embedding conjecture for flows and discuss its relation to the Lindenstrauss-Tsukamoto embedding conjecture for $\mathbb{Z}$-systems.
In Section \ref{sec:embedding}, we state the main embedding theorem
and prove it using a key proposition.
In Section \ref{sec:prod embed}, we prove the key proposition.

\bigskip

\noindent\textbf{Acknowledgements.}
Y. Gutman was partially supported by the NCN (National Science Center, Poland) Grant 2016/22/E/ST1/00448. Y. Gutman and L. Jin were partially supported by the NCN (National Science Center, Poland) Grant 2013/08/A/ST1/00275. L. Jin was supported by Basal funding PFB 170001 and Fondecyt Grant No. 3190127. This work owes greatly to previous work by Y. Gutman and M. Tsukamoto. We are grateful to the anonymous reviewer for a careful reading and many useful suggestions.

\bigskip

\section{Mean dimension for real flows}\label{sec:mdim}
We first introduce the definition of mean dimension for $\mathbb{R}$-actions.
Let $(X,d)$ be a compact metric space.
Let $\epsilon>0$ and $Y$ a topological space. A continuous map $f:X\to Y$ is called a \textbf{$(d,\epsilon)$-embedding} if for any $x_1,x_2\in X$
with $f(x_1)=f(x_2)$ we have $d(x_1,x_2)<\epsilon$. Define $$\mathrm{Widim}_\epsilon(X,d)=\min_{K\in\mathcal{K}}\dim(K),$$
where $\dim(K)$ is the Lebesgue covering dimension of the space $K$
and $\mathcal{K}$ denotes the collection of compact metrizable spaces $K$ satisfying that
there is a $(d,\epsilon)$-embedding $f:X\to K$.
Note that $\mathcal{K}$ is always nonempty since we can take
$K=X$ which is a compact metric space and $f=id$ which is the identity map from $X$ to itself.

Let $(X,\mathbb{R})$ be a flow.
For $x,y\in X$ and a subset $A$ of $\mathbb{R}$ let $$d_A(x,y)=\sup_{r\in A}d(rx,ry).$$
For $R>0$ denote by $d_R$ the metric $d_{[0,R]}$ on $X$.
Clearly, the metric $d_R$ is compatible with the topology on $X$.
\begin{Proposition}
For any $\epsilon>0$, we have

(1) $\mathrm{Widim}_\epsilon(X,d)\le\dim(X)$;

(2) if $0<\epsilon_1<\epsilon_2$ then $\mathrm{Widim}_{\epsilon_1}(X,d)\ge\mathrm{Widim}_{\epsilon_2}(X,d)$;

(3) if $0\le R_1<R_2$ then $\mathrm{Widim}_\epsilon(X,d_{R_1})\le\mathrm{Widim}_\epsilon(X,d_{R_2})$;

(4) $\mathrm{Widim}_\epsilon(X,d_{[r_1,r_2]})=\mathrm{Widim}_\epsilon(X,d_{[r_0+r_1,r_0+r_2]})$ for any $r_0,r_1,r_2\in\mathbb{R}$;

(5) $\mathrm{Widim}_\epsilon(X,d_{N+M})\le\mathrm{Widim}_\epsilon(X,d_N)+\mathrm{Widim}_\epsilon(X,d_M)$ for any $N,M\ge0$.
\end{Proposition}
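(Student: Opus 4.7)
The plan is to handle the five claims in order, each by producing an explicit $(d,\epsilon)$- or $(d_R,\epsilon)$-embedding into a suitably chosen compact metrizable target space.

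For (1), I would just note that the identity $\mathrm{id}:X\to X$ is trivially a $(d,\epsilon)$-embedding, so $\mathcal K$ contains $X$ itself, giving the bound by $\dim(X)$. For (2), the pointwise inequality $d_R\le d$ is not even needed: any $(d,\epsilon_1)$-embedding is automatically a $(d,\epsilon_2)$-embedding when $\epsilon_1<\epsilon_2$, so the admissible collection $\mathcal K$ only grows as $\epsilon$ increases, and the minimum of $\dim$ can only decrease. For (3), the key observation is the pointwise inequality $d_{R_1}\le d_{R_2}$ coming from taking sup over a smaller set; this means every $(d_{R_2},\epsilon)$-embedding is automatically a $(d_{R_1},\epsilon)$-embedding, so the $\mathcal K$ for $d_{R_1}$ contains the $\mathcal K$ for $d_{R_2}$, and the inequality of minima follows.

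For (4), the main input is that the time-$r_0$ map $\varphi_{r_0}:X\to X$ is a homeomorphism, together with the cocycle identity, which give the metric identity
\[
d_{[r_0+r_1,\,r_0+r_2]}(x,y)=d_{[r_1,r_2]}(\varphi_{r_0}x,\varphi_{r_0}y).
\]
Consequently, if $f:X\to K$ is a $(d_{[r_1,r_2]},\epsilon)$-embedding then $f\circ\varphi_{r_0}:X\to K$ is a $(d_{[r_0+r_1,r_0+r_2]},\epsilon)$-embedding, and symmetrically; so the two admissible collections $\mathcal K$ coincide, proving equality.

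The substantive part is (5), but it is still a short product construction. I would split $[0,N+M]=[0,N]\cup[N,N+M]$ to get
\[
d_{N+M}(x,y)=\max\bigl(d_N(x,y),\,d_{[N,N+M]}(x,y)\bigr),
\]
and then use (4) to rewrite $d_{[N,N+M]}(x,y)=d_M(\varphi_N x,\varphi_N y)$. Choose near-optimal embeddings $f:X\to K_1$ (for $d_N$) and $g:X\to K_2$ (for $d_M$) with $\dim K_i$ equal to the respective widims, and define $h:X\to K_1\times K_2$ by $h(x)=(f(x),g(\varphi_N x))$. If $h(x_1)=h(x_2)$ then both $d_N(x_1,x_2)<\epsilon$ and $d_M(\varphi_N x_1,\varphi_N x_2)<\epsilon$, hence $d_{N+M}(x_1,x_2)<\epsilon$, so $h$ is a $(d_{N+M},\epsilon)$-embedding; combined with the standard bound $\dim(K_1\times K_2)\le\dim K_1+\dim K_2$ this gives the claim. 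I do not expect any real obstacle: everything reduces to the decomposition of $[0,N+M]$, the translation identity from (4), and the product bound for covering dimension.
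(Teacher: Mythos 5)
Your proposal is correct and follows essentially the same route as the paper's proof: (1)--(3) from the definition, (4) via composing a given embedding with the time-$r_0$ homeomorphism, and (5) via the product map $x\mapsto(f(x),g(\varphi_N x))$ into $K_1\times K_2$ together with the subadditivity of covering dimension under products. You merely spell out (2)--(4) in slightly more detail than the paper, which simply states that they "follow from the definition" and suppresses the metric identity $d_{[r_0+r_1,r_0+r_2]}(x,y)=d_{[r_1,r_2]}(\varphi_{r_0}x,\varphi_{r_0}y)$ that you make explicit.
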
\begin{proof}
Since $(X,d)$ is a compact metric space that belongs to $\mathcal{K}$,
we have (1). Points (2) and (3) follow from the definition.
Let $\epsilon>0$.
If $K$ is a compact metrizable space and $f:X\to K$ is a continuous map
such that for any $x_1,x_2\in X$ with $f(x_1)=f(x_2)$ we have $d_{[r_1,r_2]}(x_1,x_2)<\epsilon$,
then $f\circ r_0:X\to K$ is a continuous map
such that for any $x_1,x_2\in X$ with $f\circ r_0(x_1)=f\circ r_0(x_2)$
we have
$d_{[r_1,r_2]}(r_0x_1,r_0x_2)<\epsilon$ which implies that
$d_{[r_0+r_1,r_0+r_2]}(x_1,x_2)<\epsilon$.
This shows (4).

To see (5),
let $\epsilon>0$,
$K$ (resp. $L$) be a compact metrizable space and $f:X\to K$ (resp. $g:X\to L$) be
a continuous map such that for any $x_1,x_2\in X$ with $f(x_1)=f(x_2)$ (resp. $g(x_1)=g(x_2)$)
we have $d_N(x_1,x_2)<\epsilon$ (resp. $d_M(x_1,x_2)<\epsilon$).
Define $F:X\to K\times L$ by $F(x)=(f(x),g(Nx))$ for every $x\in X$.
Clearly, $K\times L$ is a compact metrizable space and the map $F$ is continuous.
For $x,y\in X$,
if $F(x)=F(y)$ then $f(x)=f(y)$ and $g(Nx)=g(Ny)$, thus we have $d_N(x,y)<\epsilon$
and $d_M(Nx,Ny)<\epsilon$, and hence $d_{N+M}(x,y)<\epsilon$.
It follows that
$\mathrm{Widim}_\epsilon(X,d_{N+M})\le\dim(K\times L)\le\dim(K)+\dim(L)$.
Thus,
$\mathrm{Widim}_\epsilon(X,d_{N+M})\le\mathrm{Widim}_\epsilon(X,d_N)+\mathrm{Widim}_\epsilon(X,d_M)$.
\end{proof}We define the {\bf mean dimension} of a flow $(X,\mathbb{R})$ by:
$$\mathrm{mdim}(X,\mathbb{R})=\lim_{\epsilon\to0}\lim_{N\to\infty}\frac{\mathrm{Widim}_\epsilon(X,d_N)}{N}.$$

The limit exists by the Ornstein-Weiss lemma \cite[Theorem 6.1]{LW} as subadditivity holds.

Next we recall the definition of mean dimension for $\mathbb{Z}$-actions in \cite[Definition 2.6]{LW}.
Let $(X,T)$ be a $\mathbb{Z}$-action.
For $x,y\in X$ and $N\in\mathbb{N}$, denote
$$d^\mathbb{Z}_N(x,y)=\max_{n\in\mathbb{Z}\cap[0,N-1]}d(T^n(x),T^n(y)).$$
Define the mean dimension of $(X,T)$ by:
$$\mathrm{mdim}(X,\mathbb{Z})=\mathrm{mdim}(X,T)=\lim_{\epsilon\to0}
\lim_{N\to\infty(N\in\mathbb{N})}\frac{\mathrm{Widim}_\epsilon(X,d^\mathbb{Z}_N)}{N}.$$
\begin{Proposition}
Let $(X,\mathbb{R})$ be a flow.
If $X$ is finite dimensional
then $\mathrm{mdim}(X,\mathbb{R})=0$.
\end{Proposition}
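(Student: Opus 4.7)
The plan is to reduce everything to part (1) of the previous proposition, applied not to the original metric $d$ but to each $d_N$. The excerpt already notes that $d_R$ is a metric on $X$ compatible with the original topology (this follows from the joint continuity of the flow together with compactness of $[0,R]$), so $(X,d_N)$ is itself a compact metric space and $\dim(X,d_N)=\dim(X)$ since Lebesgue covering dimension is a topological invariant.

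With that in hand, the proof of part (1) goes through verbatim with $d$ replaced by $d_N$: take $K=X$ equipped with the metric $d_N$ and $f=\mathrm{id}$, which is trivially a $(d_N,\epsilon)$-embedding for every $\epsilon>0$. Hence
\[
\mathrm{Widim}_\epsilon(X,d_N)\le\dim(X)
\]
for every $\epsilon>0$ and every $N\ge 0$. Since $\dim(X)<\infty$ by hypothesis, dividing by $N$ and letting $N\to\infty$ yields
\[
\lim_{N\to\infty}\frac{\mathrm{Widim}_\epsilon(X,d_N)}{N}=0,
\]
and then the outer limit as $\epsilon\to 0$ is also $0$. By the definition of $\mathrm{mdim}(X,\mathbb{R})$ given just above the proposition, this is exactly the claim.

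There is no real obstacle here; the only point that requires a moment's thought is noticing that part (1) of the preceding proposition applies with $d$ replaced by $d_N$, which in turn rests on the compatibility of $d_N$ with the topology of $X$. Everything else is a one-line bound followed by sending $N\to\infty$.
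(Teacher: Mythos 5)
Your proof is correct and coincides with the paper's own argument: both simply observe that $\mathrm{Widim}_\epsilon(X,d_N)\le\dim(X)<\infty$ (using that $d_N$ is a compatible metric so part~(1) applies) and then divide by $N$ and pass to the limit. The paper states this in one line; you have merely spelled out the same steps.
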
\begin{proof}
We have $\mathrm{Widim}_\epsilon(X,d_N)\le\dim(X)<+\infty$.
The result follows.\end{proof}
Although the definition of mean dimension for $\mathbb{R}$-actions
depends on the metric $d$,
the next proposition shows that the mean dimension of a flow has the same value for
all metrics compatible with the topology.
Therefore mean dimension is an invariant of $\mathbb{R}$-actions.

\begin{Proposition}
Let $(X,\mathbb{R})$ be a flow.
Suppose that $d$ and $d'$ are compatible metrics on $X$.
Then $\mathrm{mdim}(X,\mathbb{R};d)=\mathrm{mdim}(X,\mathbb{R};d')$.
\end{Proposition}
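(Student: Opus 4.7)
The plan is to leverage the compactness of $X$ to reduce the statement to a pointwise metric comparison and then transfer it to the dynamical metrics $d_N$ and $d'_N$ uniformly in $N$.

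First I would observe that since $X$ is compact, the identity map $(X,d)\to(X,d')$ is uniformly continuous, so for every $\epsilon>0$ there exists $\delta=\delta(\epsilon)>0$ such that $d'(x,y)<\delta$ implies $d(x,y)<\epsilon$ for all $x,y\in X$. The crucial remark is that this implication propagates to the dynamical metrics: if $d'_N(x,y)=\sup_{r\in[0,N]}d'(rx,ry)<\delta$, then in particular $d'(rx,ry)<\delta$ for every $r\in[0,N]$, so $d(rx,ry)<\epsilon$ for every such $r$, hence $d_N(x,y)\le\epsilon$. Importantly the same $\delta$ works for every $N\ge0$.

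Next I would translate this into a comparison of widths. Given a compact metrizable space $K$ and a $(d'_N,\delta)$-embedding $f:X\to K$, the same map $f$ is automatically a $(d_N,\epsilon)$-embedding by the implication above; hence
$$\mathrm{Widim}_\epsilon(X,d_N)\le\mathrm{Widim}_\delta(X,d'_N)$$
for every $N\ge 0$. Dividing by $N$ and letting $N\to\infty$ (using that both limits exist by the Ornstein--Weiss lemma, as already noted after the definition of mean dimension), I obtain
$$\lim_{N\to\infty}\frac{\mathrm{Widim}_\epsilon(X,d_N)}{N}\le\lim_{N\to\infty}\frac{\mathrm{Widim}_\delta(X,d'_N)}{N}\le\mathrm{mdim}(X,\mathbb{R};d'),$$
where the last inequality uses that $\mathrm{mdim}(X,\mathbb{R};d')$ is the supremum over $\delta'>0$ of the middle quantity (since by part (2) of the preceding Proposition the function $\delta'\mapsto\lim_N\mathrm{Widim}_{\delta'}(X,d'_N)/N$ is monotone decreasing, so its limit as $\delta'\to 0$ coincides with its supremum). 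Taking the supremum over $\epsilon>0$ on the left yields $\mathrm{mdim}(X,\mathbb{R};d)\le\mathrm{mdim}(X,\mathbb{R};d')$. The reverse inequality follows by interchanging the roles of $d$ and $d'$.

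I expect no serious obstacle; the only point that requires a moment's care is the observation that the modulus of continuity $\delta(\epsilon)$ obtained from compactness of $X$ works uniformly in $N$ for the dynamical metrics, which is automatic because $d_N$ and $d'_N$ are suprema of the same pointwise metrics along the orbit. Everything else is just unwinding the definitions and using the monotonicity in $\epsilon$ already proved.
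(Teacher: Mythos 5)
Your proof is correct and follows essentially the same route as the paper's: extract a uniform modulus $\delta(\epsilon)$ from the uniform continuity of the identity on the compact space $X$, observe it transfers to the dynamical metrics $d_N$ and $d'_N$ uniformly in $N$, deduce $\mathrm{Widim}_\epsilon(X,d_N)\le\mathrm{Widim}_\delta(X,d'_N)$, and pass to the limit. The only cosmetic difference is how the final limit in $\epsilon$ is handled: the paper arranges $\delta<\epsilon$ so that $\delta\to 0$ with $\epsilon$, whereas you invoke monotonicity in $\epsilon$ to identify the limit with a supremum; both are fine. (Minor: your inequality $d_N(x,y)\le\epsilon$ can in fact be made strict, since $r\mapsto d(rx,ry)$ is continuous on the compact interval $[0,N]$ and so attains its supremum, but this has no bearing on the argument.)
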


\begin{proof}
Since $d$ are $d'$ are equivalent, the identity map $id:(X,d')\to(X,d)$ is uniformly continuous.
Thus, for every $\epsilon>0$ there is $\delta>0$ with $\delta<\epsilon$ such that
for any $x,y\in X$ with $d'(x,y)<\delta$ we have $d(x,y)<\epsilon$
which implies that
$\mathrm{Widim}_\epsilon(X,d_N)\le\mathrm{Widim}_\delta(X,d'_N)$
for every $N\in\mathbb{N}$.
Noting that $\epsilon\to0$ yields $\delta\to0$ we obtain that
$\mathrm{mdim}(X,\mathbb{R};d)\le\mathrm{mdim}(X,\mathbb{R};d')$.
In the same way we also obtain
$\mathrm{mdim}(X,\mathbb{R};d')\le\mathrm{mdim}(X,\mathbb{R};d)$.
\end{proof}

\begin{Proposition}[{\cite[Def. 2.6]{LW}}]\label{ddz}
Let $(X,\mathbb{Z})$ be a t.d.s. If $d$ and $d'$ are compatible metrics on $X$
then we have $\mathrm{mdim}(X,\mathbb{Z};d)=\mathrm{mdim}(X,\mathbb{Z};d')$.
\end{Proposition}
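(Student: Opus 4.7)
The plan is to mimic, almost verbatim, the proof of the preceding proposition for $\mathbb{R}$-flows, since the only structural difference is that the sup over $[0,N]$ is replaced by a max over the finite set $\mathbb{Z}\cap[0,N-1]$. The underlying comparison of widims on a fixed compact space still rests on uniform continuity of the identity map between the two metrics, which holds automatically because $X$ is compact and $d,d'$ are compatible.

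Concretely, I would first fix $\epsilon>0$ and, using uniform continuity of $\mathrm{id}:(X,d')\to(X,d)$, choose $\delta\in(0,\epsilon)$ such that $d'(x,y)<\delta$ implies $d(x,y)<\epsilon$. The key observation is that this implication upgrades at once to the iterated metrics: if $(d')^{\mathbb{Z}}_N(x,y)<\delta$, i.e.\ $d'(T^nx,T^ny)<\delta$ for all $n\in\mathbb{Z}\cap[0,N-1]$, then $d(T^nx,T^ny)<\epsilon$ for every such $n$, so $d^{\mathbb{Z}}_N(x,y)<\epsilon$. Consequently any $((d')^{\mathbb{Z}}_N,\delta)$-embedding $f:X\to K$ is automatically a $(d^{\mathbb{Z}}_N,\epsilon)$-embedding, giving
\[
\mathrm{Widim}_\epsilon(X,d^{\mathbb{Z}}_N)\le\mathrm{Widim}_\delta(X,(d')^{\mathbb{Z}}_N)
\]
for every $N\in\mathbb{N}$.

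Next I would divide by $N$, let $N\to\infty$, and then let $\epsilon\to 0$. Because $\delta$ was chosen depending on $\epsilon$ with $\delta\to 0$ as $\epsilon\to 0$, the outer limit on the right-hand side converges to $\mathrm{mdim}(X,\mathbb{Z};d')$, yielding $\mathrm{mdim}(X,\mathbb{Z};d)\le\mathrm{mdim}(X,\mathbb{Z};d')$. Swapping the roles of $d$ and $d'$ and repeating the argument (using uniform continuity of $\mathrm{id}:(X,d)\to(X,d')$) gives the reverse inequality.

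There is essentially no obstacle here; the argument is a verbatim transcription of the continuous-time proof with $d_N$ replaced by $d^{\mathbb{Z}}_N$. The only point worth checking explicitly is that the two limits defining mean dimension genuinely exist (monotonicity in $\epsilon$ and subadditivity in $N$ for the $\mathbb{Z}$-case), which is recorded in \cite{LW} and does not affect the metric-independence argument itself.
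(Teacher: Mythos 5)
Your argument is correct and is the standard one; it transcribes verbatim the paper's own proof of the analogous $\mathbb{R}$-flow statement (the preceding proposition), with $d_N$ replaced by $d^{\mathbb{Z}}_N$. The paper itself does not reprove this proposition but simply quotes it from \cite[Def.\ 2.6]{LW}, so there is no divergence to report.
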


Note that a flow $(X,(\varphi_r)_{r\in\mathbb{R}})$
naturally induces a ``sub-$\mathbb{Z}$-action'' $(X,\varphi_1)$.
\begin{Proposition}\label{rz}
Let $(X,(\varphi_r)_{r\in\mathbb{R}})$ be a flow.
Then $\mathrm{mdim}(X,(\varphi_r)_{r\in\mathbb{R}})=\mathrm{mdim}(X,\varphi_1)$.
\end{Proposition}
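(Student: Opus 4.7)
The plan is to prove the two inequalities by directly comparing the metrics $d_N$ (continuous-time) and $d^{\mathbb{Z}}_N$ (integer-time) on the same space $X$. Each inequality will reduce, via the elementary observation that a $(d,\epsilon)$-embedding for a pointwise larger pseudometric is also a $(d,\epsilon)$-embedding for a pointwise smaller one, to a comparison between $d_N$ and $d^{\mathbb{Z}}_N$ (up to a controlled loss of $\epsilon$). The mean-dimension statement will then fall out of the limit definitions.

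For the inequality $\mathrm{mdim}(X,\varphi_1) \le \mathrm{mdim}(X,(\varphi_r)_{r \in \mathbb{R}})$ I would simply note that $\{0,1,\dots,N-1\} \subset [0,N]$, whence $d^{\mathbb{Z}}_N(x,y) \le d_N(x,y)$ for all $x,y \in X$. Consequently every $(d_N,\epsilon)$-embedding is automatically a $(d^{\mathbb{Z}}_N,\epsilon)$-embedding, so $\mathrm{Widim}_\epsilon(X,d^{\mathbb{Z}}_N) \le \mathrm{Widim}_\epsilon(X,d_N)$; dividing by $N$ and passing to the limits $N \to \infty$ and $\epsilon \to 0$ gives the inequality.

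The reverse inequality is the substantive step and rests on uniform continuity. Since $\Gamma \colon [0,1] \times X \to X$ is continuous on a compact space, for every $\epsilon > 0$ there exists $\delta \in (0,\epsilon)$ such that whenever $d(x,y) < \delta$ one has $d(\varphi_s x, \varphi_s y) < \epsilon$ \emph{for every} $s \in [0,1]$. Any $r \in [0,N]$ can be decomposed as $r = n + s$ with $n \in \{0,1,\dots,N-1\}$ and $s \in [0,1]$ (the endpoint $r = N$ is covered by $n=N-1$, $s=1$), and $\varphi_r = \varphi_s \circ \varphi_n$. Hence $d^{\mathbb{Z}}_N(x,y) < \delta$ forces $d(\varphi_n x, \varphi_n y) < \delta$ for every $n$, and uniform continuity upgrades this to $d(\varphi_r x, \varphi_r y) < \epsilon$ at every $r \in [0,N]$, so $d_N(x,y) \le \epsilon$; strictness is recovered by running the same argument for $\epsilon/2$. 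Thus every $(d^{\mathbb{Z}}_N,\delta)$-embedding is a $(d_N,\epsilon)$-embedding, which yields $\mathrm{Widim}_\epsilon(X,d_N) \le \mathrm{Widim}_\delta(X,d^{\mathbb{Z}}_N)$. Dividing by $N$, letting $N \to \infty$, and finally sending $\epsilon \to 0$ (which forces $\delta \to 0$) finishes the argument.

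The only delicate point is ensuring that the modulus of continuity $\delta = \delta(\epsilon)$ is independent of both $N$ and the time $s$ at which the flow is applied, so that the resulting bound on $\mathrm{Widim}_\epsilon(X,d_N)$ in terms of $\mathrm{Widim}_\delta(X,d^{\mathbb{Z}}_N)$ is uniform in $N$. This uniformity is exactly what compactness of $[0,1] \times X$ delivers, and it is what permits the limit $N \to \infty$ to be taken before $\epsilon \to 0$ without difficulty; the rest of the proof is bookkeeping with the two definitions of $\mathrm{mdim}$.
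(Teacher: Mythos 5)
Your proof is correct, but it takes a somewhat different route than the paper. The paper observes the clean identity $d_N = (d_1)^{\mathbb Z}_N$, where $d_1 = d_{[0,1]}$ is the compatible metric obtained by taking the sup of $d$ over the time-$[0,1]$ orbit segment; this says that the $\mathbb{R}$-mean-dimension of $(X,\mathbb R)$ computed with $d$ is, on the nose, the $\mathbb{Z}$-mean-dimension of $(X,\varphi_1)$ computed with $d_1$. It then invokes the metric-independence of $\mathbb{Z}$-mean-dimension (quoted from Lindenstrauss--Weiss as Proposition~\ref{ddz}) to replace $d_1$ by $d$, and is done. You instead prove the two inequalities by hand: the easy one from $d^{\mathbb Z}_N \le d_N$, and the hard one by choosing, via compactness of $[0,1]\times X$, a $\delta = \delta(\epsilon)$ uniform in $s\in[0,1]$ and $N$, then decomposing $r = n + s$. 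This is correct (and you handle the $\le\epsilon$ versus $<\epsilon$ subtlety properly by running the argument for $\epsilon/2$), but it is worth noting that your uniform-continuity step is exactly the content of the metric-independence lemma you are \emph{not} citing: you are in effect re-proving that $d$ and $d_1$ are uniformly equivalent. So what your approach buys is self-containment --- no appeal to Proposition~\ref{ddz} --- at the cost of being longer; what the paper's approach buys is brevity and a conceptually useful intermediate statement, $\mathrm{mdim}(X,\mathbb{R};d)=\mathrm{mdim}(X,\varphi_1;d_1)$, which is sharper than the proposition itself and worth recording.
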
\begin{proof}
Recall that for any compatible metric $D$ on $X$ and $R>0$, we denote
$D_R=D_{[0,R]}$. For a flow $(X,d;\mathbb{R})$ and $N\in\mathbb{N}$,
we have
$$(d_1)^\mathbb{Z}_N=(d^\mathbb{Z}_N)_1=d_N.$$
Thus,
$$\mathrm{mdim}(X,\mathbb{R};d)=\mathrm{mdim}(X,\mathbb{Z};d_1).$$
Since $d_1$ and $d$ are compatible metrics on $X$,
by Proposition \ref{ddz} we have
$$\mathrm{mdim}(X,\mathbb{Z};d_1)=\mathrm{mdim}(X,\mathbb{Z};d).$$
Combining the two equalities we have as desired $$\mathrm{mdim}(X,(\varphi_r)_{r\in\mathbb{R}})=\mathrm{mdim}(X,\varphi_1).$$
\end{proof}Thus if the space is not metrizable then we may take
$\mathrm{mdim}(X,\varphi_1)$ as the definition of mean dimension.
\begin{Proposition}
Let $(X,(\varphi_r)_{r\in\mathbb{R}})$ be a flow.
If the topological entropy of $(X,(\varphi_r)_{r\in\mathbb{R}})$ is finite
then the mean dimension of $(X,(\varphi_r)_{r\in\mathbb{R}})$ is zero.
\end{Proposition}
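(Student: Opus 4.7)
The plan is to reduce the $\mathbb{R}$-action statement to the corresponding, already known $\mathbb{Z}$-action statement via Proposition \ref{rz}. The key observation is that the topological entropy of a flow $(X,(\varphi_r)_{r\in\mathbb{R}})$ is standardly defined to coincide with the topological entropy of its time-one map $\varphi_1$ (by Abramov's formula, or simply taken as the definition); in particular, finite entropy of the flow is equivalent to finite entropy of the $\mathbb{Z}$-action $(X,\varphi_1)$.

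First I would invoke this identity to translate the hypothesis: if $h_{\mathrm{top}}(X,(\varphi_r)_{r\in\mathbb{R}})<\infty$, then $h_{\mathrm{top}}(X,\varphi_1)<\infty$. Next I would apply the Lindenstrauss--Weiss theorem for $\mathbb{Z}$-actions (\cite{LW}), which asserts that any $\mathbb{Z}$-system of finite topological entropy has zero mean dimension; this yields $\mathrm{mdim}(X,\varphi_1)=0$. Finally, Proposition \ref{rz} gives
\[
\mathrm{mdim}(X,(\varphi_r)_{r\in\mathbb{R}})=\mathrm{mdim}(X,\varphi_1)=0,
\]
which is the desired conclusion.

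There is no real obstacle here once the definitional conventions are agreed upon: the entire content is packaged by Proposition \ref{rz} together with the classical $\mathbb{Z}$-action result. The only point worth flagging is to be explicit about how the flow's topological entropy relates to that of the time-one map, so that the hypothesis is transferred correctly to the discrete setting before quoting Lindenstrauss--Weiss.
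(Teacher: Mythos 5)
Your proposal is correct and follows essentially the same route as the paper's proof: identify the flow's topological entropy with that of its time-one map (the paper cites \cite[Proposition 8.3.6]{HK} for this), apply \cite[Theorem 4.2]{LW} to conclude $\mathrm{mdim}(X,\varphi_1)=0$, and then invoke Proposition \ref{rz} to pass back to the flow.
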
\begin{proof}
By \cite[Proposition 8.3.6]{HK} we have
$h_{top}(X,\varphi_1)=h_{top}(X,(\varphi_r)_{r\in\mathbb{R}})$ which is finite.
By \cite[Theorem 4.2]{LW} we have $\mathrm{mdim}(X,\varphi_1)=0$.
By Proposition \ref{rz}, $\mathrm{mdim}(X,(\varphi_r)_{r\in\mathbb{R}})=0$.
\end{proof}The following proposition directly follows from the definition.
\begin{Proposition}
For any flow $(X,(\varphi_r)_{r\in\mathbb{R}})$ and $c\in\mathbb{R}$,
$$\mathrm{mdim}(X,(\varphi_{cr})_{r\in\mathbb{R}})=|c|\cdot\mathrm{mdim}(X,(\varphi_r)_{r\in\mathbb{R}}).$$
\end{Proposition}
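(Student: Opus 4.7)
The plan is to unwind the definitions and reduce each case to a change of variable in the limit. Write $\psi_r := \varphi_{cr}$, and let $d^\psi_N$ denote the metric $d_N$ associated to the flow $\psi$, while $d^\varphi_R = d_{[0,R]}$ as defined earlier.

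First, I would handle the case $c>0$. Directly from the definition,
\[
d^\psi_N(x,y)=\sup_{r\in[0,N]}d(\varphi_{cr}x,\varphi_{cr}y)=\sup_{s\in[0,cN]}d(\varphi_s x,\varphi_s y)=d^\varphi_{cN}(x,y),
\]
so $\mathrm{Widim}_\epsilon(X,d^\psi_N)=\mathrm{Widim}_\epsilon(X,d^\varphi_{cN})$. Dividing by $N$ and rewriting $1/N=c/(cN)$,
\[
\lim_{N\to\infty}\frac{\mathrm{Widim}_\epsilon(X,d^\psi_N)}{N}=c\cdot\lim_{M\to\infty}\frac{\mathrm{Widim}_\epsilon(X,d^\varphi_M)}{M},
\]
where the substitution $M=cN$ is valid because $M\to\infty$ iff $N\to\infty$ (both limits exist by Ornstein--Weiss). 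Taking $\epsilon\to 0$ yields $\mathrm{mdim}(X,\psi)=c\cdot\mathrm{mdim}(X,\varphi)$.

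Next, for $c<0$ set $c'=-c>0$. Then
\[
d^\psi_N(x,y)=\sup_{r\in[0,N]}d(\varphi_{-c'r}x,\varphi_{-c'r}y)=\sup_{s\in[-c'N,0]}d(\varphi_s x,\varphi_s y)=d^\varphi_{[-c'N,0]}(x,y).
\]
By the shift invariance of $\mathrm{Widim}_\epsilon$ (part (4) of the first proposition) applied with $r_0=c'N$, $r_1=-c'N$, $r_2=0$, we get $\mathrm{Widim}_\epsilon(X,d^\varphi_{[-c'N,0]})=\mathrm{Widim}_\epsilon(X,d^\varphi_{c'N})$, and the $c>0$ case gives $\mathrm{mdim}(X,\psi)=c'\cdot\mathrm{mdim}(X,\varphi)=|c|\cdot\mathrm{mdim}(X,\varphi)$.

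Finally, the degenerate case $c=0$: then $\psi_r=\mathrm{id}$ for all $r$, so $d^\psi_N=d$ and $\mathrm{Widim}_\epsilon(X,d^\psi_N)=\mathrm{Widim}_\epsilon(X,d)$ is a fixed finite quantity for each $\epsilon>0$ (it is bounded above by the dimension of a finite polyhedron obtained from a cover of $X$ by open sets of diameter less than $\epsilon$). Dividing by $N$ and letting $N\to\infty$ gives zero, matching $|c|\cdot\mathrm{mdim}(X,\varphi)=0$ (with the convention $0\cdot\infty=0$). There is no real obstacle here; the only thing to be slightly careful about is justifying the change of variable $M=cN$ inside the limit, which is immediate since the defining limit is over real $N\to\infty$ rather than over integers.
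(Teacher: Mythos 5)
Your proof is correct, and since the paper states only that the proposition ``directly follows from the definition'' without supplying details, your argument is exactly the natural unwinding the authors have in mind: the observation that $d^{\psi}_{N}=d^{\varphi}_{cN}$ for $c>0$, the reduction of $c<0$ to $c>0$ via the translation invariance of $\mathrm{Widim}_\epsilon$ (part (4) of Proposition 2.1), and the trivial $c=0$ case using finiteness of $\mathrm{Widim}_\epsilon(X,d)$. The only point worth being explicit about, which you do note, is that the substitution $M=cN$ inside the inner limit is legitimate because that limit exists a priori (Ornstein--Weiss) and is over real parameter $N\to\infty$.
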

\section{Construction of minimal real flows with arbitrary mean dimension}\label{sec:construction}
By defintion $\mathrm{mdim}(X,\mathbb{R})$ belongs to $[0,+\infty]$.
In this section we will show that for every $r\in[0,+\infty]$,
there is a minimal flow $(X,\mathbb{R})$ with $\mathrm{mdim}(X,\mathbb{R})=r$.

Recall that there are natural constructions for passing from a $\mathbb{Z}$-action to a flow,
and vice versa \cite[Section 1.11]{BS}.
Let $(X,T)$ be a $\mathbb{Z}$-action and $f:X\to(0,\infty)$ be a continuous function (in particular bounded away from $0$).
Consider the quotient space (equipped with the quotient topology) $$S_fX=\{(x,t)\in X\times\mathbb{R}^+:0\le t\le f(x)\}/\sim,$$
where $\sim$ is the equivalence relation $(x,f(x))\sim(Tx,0)$.
The {\bf suspension} over $(X,T)$ generated by the {\bf roof function} $f$ is
the flow $(S_fX,(\psi_t)_{t\in\mathbb{R}})$ given by
$$\psi_t(x,s)=(T^nx,s') \text{ for } t\in\mathbb{R} \text{ and } (x,s)\in S_fX,$$
where $n$ and $s'$ satisfy
$$\sum_{i=0}^{n-1}f(T^ix)+s'=t+s, \,\; 0\le s'\le f(T^nx).$$
In other words, flow along $\{x\}\times\mathbb{R}^+$ to $(x,f(x))$ then continue from
$(Tx,0)$ (which is the same as $(x,f(x))$) along $\{Tx\}\times\mathbb{R}^+$ and so on.
When $f\equiv 1$, then $S_fX$ is called the \textbf{mapping torus} over $X$.

Let $d$ be a compatible metric on $X$. Bowen and Walters introduced a compatible metric $\tilde{d}$ on $S_fX$ \cite[Section 4]{BW} known today as the \textbf{Bowen-Walters metric}\footnote{Note that in \cite{BW} it is assumed  that $\mathrm{diam}(X)<1$ but this is unnecessary.}. Let us recall the construction. First assume $f\equiv1$.
We will introduce $\tilde{d}_{S_1X}$ on the space $S_1X$. First, for $x,y\in X$ and $0\le t\le1$ define
the length of the horizontal segment $((x,t),(y,t))$ by:
$$d_h((x,t),(y,t))=(1-t)d(x,y)+td(Tx,Ty).$$
Clearly, we have $d_h((x,0),(y,0))=d(x,y)$ and $d_h((x,1),(y,1))=d(Tx,Ty)$.
Secondly, for $(x,t),(y,s)\in S_1X$ which are on the same orbit define
the length of the vertical segment $((x,t),(y,t))$ by:
$$d_v((x,t),(y,s))=\inf\{|r|:\psi_r(x,t)=(y,s)\}.$$
Finally, for any $(x,t),(y,s)\in S_1X$ define the distance
$\tilde{d}_{S_1X}((x,t),(y,s))$ to be the infimum of the lengths of paths
between $(x,t)$ and $(y,s)$ consisting of a finite number of horizontal and vertical segments. Bowen and Walters showed this construction gives rise to a compatible metric on $S_1X$.
Now assume a continuous function $f:X\to(0,\infty)$ is given. There is a natural homeomorphism $i_f:S_1X\to S_fX$ given by $(x,t)\mapsto (x,tf(x))$. Define $\tilde{d}_{S_fX}=(i_f)_{*}(\tilde{d}_{S_1X})$.

Recall from \cite[Definition 4.1]{LW} that for a $\mathbb{Z}$-action $(X,T)$,
the {\bf metric mean dimension} $\mathrm{mdim}_M(X,d)$ of $X$ with respect to
a metric $d$ compatible with the topology on $X$ is defined as follows.
Let $\epsilon>0$ and $n\in\mathbb{N}$.
A subset $S$ of $X$ is called $(\epsilon,d,n)$-spanning if for every
$x\in X$ there is $y\in S$ such that $d_n^\mathbb{Z}(x,y)\leq\epsilon$.
Set $$A(X,\epsilon,d,n)=\min\{\#S: S\subset X \text{ is } (\epsilon,d,n)\text{-spanning}\}$$
and define
$$\mathrm{mdim}_M(X,T,d)=\liminf_{\epsilon\to0}\frac{1}{|\log\epsilon|}
\limsup_{n\to\infty}\frac{1}{n}\log A(X,\epsilon,d,n).$$

Similarly one may define metric mean dimension for flows but we will not pursue this direction.
\begin{Theorem}[Lindenstrauss-Weiss {\cite[Theorem 4.2]{LW}}]\label{lw42}
For any $\mathbb{Z}$-action $(X,T)$ and any metric $d$ compatible with the topology
on $X$, $$\mathrm{mdim}(X,T)\le\mathrm{mdim}_M(X,T,d).$$
\end{Theorem}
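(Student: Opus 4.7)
The plan is to relate $\mathrm{Widim}_\epsilon(X,d^\mathbb{Z}_N)$ to the spanning number $A(X,\delta,d,N)$ at a suitable pair of scales, and then pass to limits. The crux is a purely geometric dimension-theoretic lemma: for any compact metric space $(Y,\rho)$ and any $\epsilon>0$,
$$\mathrm{Widim}_\epsilon(Y,\rho)\cdot|\log\epsilon|\leq \log N(\epsilon/4,Y,\rho)+O(1),$$
where $N(\delta,Y,\rho)$ denotes the minimum number of $\rho$-balls of radius $\delta$ needed to cover $Y$.

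I would prove the lemma starting from a minimal $\epsilon/4$-dense subset $S=\{y_1,\ldots,y_N\}\subseteq Y$. The naive partition-of-unity map $Y\to\Delta^{N-1}$ into the full simplex only yields $\mathrm{Widim}_\epsilon\leq N-1$, which is far too weak. Instead I would build a refined finite open cover of $Y$ by sets of $\rho$-diameter less than $\epsilon$ whose order is at most $\log N/|\log\epsilon|+O(1)$, using an iterative dyadic decomposition of $S$ that alternates between halving the number of center points carried along a branch and halving the covering scale. The partition-of-unity map into the nerve of this refined cover then has the required dimension bound, and any two points with the same image lie in a common cover element, hence are $\rho$-close.

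Granted the lemma, the rest is routine. Any optimal $(\epsilon/4,d,N)$-spanning set is $\epsilon/4$-dense in the $d^\mathbb{Z}_N$ metric, so $N(\epsilon/4,X,d^\mathbb{Z}_N)\leq A(X,\epsilon/4,d,N)$. Applying the lemma to $(Y,\rho)=(X,d^\mathbb{Z}_N)$ yields
$$\mathrm{Widim}_\epsilon(X,d^\mathbb{Z}_N)\leq \frac{\log A(X,\epsilon/4,d,N)+O(1)}{|\log\epsilon|}.$$
Dividing by $N$, taking $\limsup_{N\to\infty}$, and then $\liminf_{\epsilon\to 0}$, we obtain $\mathrm{mdim}(X,T)\leq \mathrm{mdim}_M(X,T,d)$: the term $O(1)/N$ vanishes in the first limit, the factor $|\log(\epsilon/4)|/|\log\epsilon|$ tends to $1$, and the $\lim_{\epsilon\to 0}$ on the left-hand side in the definition of $\mathrm{mdim}$ coincides with $\liminf_{\epsilon\to 0}$ by the monotonicity established in Proposition~2.1(2).

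The main obstacle is the dimension-theoretic lemma: passing from the trivial bound $\mathrm{Widim}_\epsilon\leq N-1$ to the desired logarithmic improvement is a genuinely nontrivial combinatorial construction, as one must balance the cardinality of the cover against both its diameter and its order simultaneously. Once this scale-by-scale inequality is available, the dynamical content of the theorem reduces to the trivial inclusion spanning $\Rightarrow$ covering in the Bowen metric and a routine limit manipulation.
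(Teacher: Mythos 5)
The theorem you are proving is not proved in this paper; it is quoted verbatim from Lindenstrauss--Weiss~\cite{LW}, so the comparison must be with the original argument. Your reduction to the geometric lemma, and the subsequent limit bookkeeping, are fine in outline, but the lemma itself is false as stated. Take $Y=[0,2\epsilon]^m$ with the $\ell^\infty$ metric. Since $\epsilon<2\epsilon$, the Lebesgue covering lemma forces every open cover of $Y$ by sets of diameter less than $\epsilon$ to have order at least $m$, so $\mathrm{Widim}_\epsilon(Y,\rho)=m$; in particular no dyadic refinement of an $\epsilon/4$-net can produce a cover of order $\log N/|\log\epsilon|+O(1)$ once this quantity drops below $m$. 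On the other hand $N(\epsilon/4,Y,\rho)\le 5^m$, so your inequality would read $m\,|\log\epsilon|\le m\log 5+O(1)$, which fails for every $\epsilon<1/5$ as soon as $m$ is large. The same scaling phenomenon can occur for the metrics $d^{\mathbb Z}_N$ in the application, because the Widim--covering relation you posit is not scale invariant while $\mathrm{mdim}_M$ is.

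The underlying defect is that a net at the single scale $\epsilon/4$ carries no information about the geometry of $Y$ at scales much finer than $\epsilon$, and it is exactly multi-scale control that bounds the order of an $\epsilon$-fine cover logarithmically. The correct argument (as in Lindenstrauss--Weiss) works with a cover at a much smaller radius $\delta=\epsilon\,2^{-n}$, where $n$ is a free parameter, and proves a bound of the flavour $\mathrm{Widim}_\epsilon(Y,\rho)\le\frac{1}{n}\log_2 N(\delta,Y,\rho)$; on the counterexample above this gives $m(1+1/n)$, which is sharp. Writing $\frac{1}{n}\approx\frac{\log 2}{|\log\delta|}$ reveals that the factor $\frac{1}{|\log\delta|}$ in the definition of $\mathrm{mdim}_M$ is supplied by the freedom to send $\delta\to 0$ independently of $\epsilon$, not by $|\log\epsilon|$ evaluated at the Widim scale. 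Your proposal pins $\delta$ to the fixed ratio $\epsilon/4$, and that is what breaks.
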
\begin{Theorem}[Lindenstrauss {\cite[Theorem 4.3]{L}}]\label{lin43}
If a $\mathbb{Z}$-action $(X,T)$ is an extension of an aperiodic minimal system
then there is a compatible metric $d$ on $X$ such that
$\mathrm{mdim}(X,T)=\mathrm{mdim}_M(X,T,d)$.
\end{Theorem}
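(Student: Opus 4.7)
The plan is to prove the theorem via the \emph{small boundary property} (SBP), which is known for extensions of aperiodic minimal systems, and then to use SBP to construct an explicit compatible metric $d$ whose metric mean dimension realizes $\mathrm{mdim}(X,T)$. Recall that $(X,T)$ has SBP if the topology has a basis of open sets $U$ whose boundaries $\partial U$ have vanishing \emph{orbit capacity}, where for $E\subset X$
$$\mathrm{ocap}(E)=\lim_{n\to\infty}\frac{1}{n}\sup_{x\in X}\#\{0\le k<n:T^kx\in E\}.$$
The first ingredient I would invoke is the Lindenstrauss--Weiss theorem that every extension of an aperiodic minimal $\mathbb{Z}$-system has SBP; this is the only place minimality and aperiodicity of the base are used.

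Using SBP, I would inductively produce a sequence of finite open covers $(\mathcal{U}_k)_{k\ge1}$ of $X$ with the following three properties: the mesh of $\mathcal{U}_k$ (in some auxiliary reference metric) tends to $0$; the topological dimension of $\mathcal{U}_k$, viewed as an open cover, is bounded by an integer $D_k$ with $D_k/k\to\mathrm{mdim}(X,T)$ as $k\to\infty$; and the total boundary $\partial\mathcal{U}_k:=\bigcup_{U\in\mathcal{U}_k}\partial U$ satisfies $\mathrm{ocap}(\partial\mathcal{U}_k)<2^{-k}$. The existence of the cover with the correct asymptotic order comes from the definition of mean dimension via refinements, while SBP lets one shrink each $U\in\mathcal{U}_k$ slightly to force its boundary inside a small-orbit-capacity set without affecting the order. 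Next, for each $k$, fix a partition of unity $\{\phi_{k,j}\}_j$ subordinate to $\mathcal{U}_k$ and form $\Phi_k:X\to[0,1]^{|\mathcal{U}_k|}$, and define
$$d(x,y)=\sum_{k\ge1}2^{-k}\,\|\Phi_k(x)-\Phi_k(y)\|_\infty.$$
Compatibility follows because $\mathrm{mesh}(\mathcal{U}_k)\to0$ forces $d$ to separate points and to induce the original topology.

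To finish, I would estimate $A(X,\epsilon,d,n)$ for this $d$. Choose $k=k(\epsilon)$ so that $2^{-k}$ is of order $\epsilon$; then an $(\epsilon,d,n)$-separated pair of points must be separated by some $\Phi_k\circ T^j$ with $j\in[0,n)$ and $k\le k(\epsilon)$. Up to the points whose orbit meets $\partial\mathcal{U}_k$ on a set of $j$'s of density $\le n\cdot\mathrm{ocap}(\partial\mathcal{U}_k)=o(n)$, each separated point is determined by its $\mathcal{U}_k$-itinerary over $[0,n)$. A Bishop-style covering argument using that $\dim\mathcal{U}_k\le D_k$ bounds the number of such itineraries exponentially in $n\cdot D_k$, giving
$$\limsup_{n\to\infty}\frac{\log A(X,\epsilon,d,n)}{n}\le D_{k(\epsilon)}\log C+o(1)$$
for a universal constant $C$; dividing by $|\log\epsilon|\asymp k(\epsilon)$ and letting $\epsilon\to0$ yields $\mathrm{mdim}_M(X,T,d)\le\mathrm{mdim}(X,T)$. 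The reverse inequality is Theorem \ref{lw42}.

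The main obstacle is the coupled calibration of the three scales: the mesh of $\mathcal{U}_k$, the order $D_k$, and the orbit capacity bound $2^{-k}$, all against the geometric weight $2^{-k}$ in the metric. One must choose these so that the $\mathcal{U}_k$ that governs resolution $\epsilon$ has $D_k/|\log\epsilon|$ close to $\mathrm{mdim}(X,T)$; this is the technical core of Lindenstrauss's argument, and it is exactly here that SBP (not merely finite topological dimension of $X$, which need not hold) is indispensable, since it allows the boundary error term to be absorbed into the $o(n)$ correction above.
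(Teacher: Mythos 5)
This theorem is stated in the paper as a citation of Lindenstrauss [Lin99, Theorem~4.3]; the paper offers no proof of its own, so the comparison must be with Lindenstrauss's original argument, and with that argument your proposal conflicts at the very first step.

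Your ``first ingredient'' --- that every extension of an aperiodic minimal $\mathbb{Z}$-system has the small boundary property --- is false, and false in a way that wrecks the whole plan. Lindenstrauss and Weiss proved that SBP \emph{implies} $\mathrm{mdim}(X,T)=0$ (see \cite[Theorem 5.4]{LW}). Consequently, any system with positive mean dimension cannot have SBP; and the theorem you are proving is precisely meant to cover systems of arbitrary (possibly infinite) mean dimension, e.g.\ $([0,1]^{\mathbb{Z}}\times Y,\sigma\times S)$ with $(Y,S)$ aperiodic minimal, which is an extension of an aperiodic minimal system but has mean dimension $1$. What Lindenstrauss actually proved is the \emph{partial converse}: an extension of an aperiodic minimal system with $\mathrm{mdim}(X,T)=0$ has SBP. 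You have reversed the hypothesis and conclusion. Once the SBP assumption is removed, the ``shrink each $U$ so that $\mathrm{ocap}(\partial U)<2^{-k}$'' step has no justification, and since that is exactly where the $o(n)$ correction in your counting argument comes from, the estimate $\mathrm{mdim}_M\le\mathrm{mdim}$ does not follow.

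The correct route (and the one in [Lin99]) does not perturb arbitrary open sets but instead exploits the aperiodic minimal \emph{factor} directly: one uses the marker/Rokhlin--tower property of the minimal factor $(Y,S)$ to produce, for each $n$, a closed tower base pulled back to $X$ whose boundary has small orbit capacity, and one builds the compatible metric from covers adapted to these towers together with the $\epsilon$-embeddings furnished by $\mathrm{Widim}$. That device gives precisely the local, cover-by-cover boundary control you want, without asserting the (false) global SBP. Your counting heuristic in the last two paragraphs is in the right spirit --- bounding $A(X,\epsilon,d,n)$ by the number of itineraries through covers of order $D_k\approx k\cdot\mathrm{mdim}$, up to an orbit-capacity error --- but as written it rests on a premise that contradicts Theorem~\ref{lw42} itself whenever $\mathrm{mdim}(X,T)>0$.
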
For related results we refer to \cite[Appendix A]{G17}.
\begin{Proposition}\label{suspmdim}
Let $(Y,(\varphi_r)_{r\in\mathbb{R}})$ be the mapping torus over $(X,T)$ (the suspension generated by the roof function $1$).
Assume that there is a compatible metric $d$ on $X$ with $\mathrm{mdim}_M(X,T,d)=\mathrm{mdim}(X,T)$. Then $$\mathrm{mdim}(X,T)=\mathrm{mdim}(Y,(\varphi_r)_{r\in\mathbb{R}})=\mathrm{mdim}_M(Y,T,\tilde{d}).$$
\end{Proposition}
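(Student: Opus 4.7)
The plan is to establish the cyclic chain
\[
\mathrm{mdim}(X,T)\le\mathrm{mdim}\bigl(Y,(\varphi_r)_{r\in\mathbb{R}}\bigr)\le\mathrm{mdim}_M(Y,\varphi_1,\tilde{d})\le\mathrm{mdim}_M(X,T,d)=\mathrm{mdim}(X,T),
\]
which forces equality throughout; here I read $\mathrm{mdim}_M(Y,T,\tilde d)$ in the statement as the metric mean dimension of the time-one map $\varphi_1$, the only available $\mathbb{Z}$-action on $Y$.

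The equality $\mathrm{mdim}(Y,(\varphi_r)_{r\in\mathbb{R}})=\mathrm{mdim}(Y,\varphi_1)$ is Proposition \ref{rz}. Because $f\equiv 1$, the time-one map acts by $\varphi_1(x,s)=(Tx,s)$, so $X\times\{0\}\subset Y$ is a closed $\varphi_1$-invariant subset on which $\varphi_1$ is conjugate to $T$. Since the restriction of any $(d_N^{\mathbb{Z}},\epsilon)$-embedding of $Y$ is again a $(d_N^{\mathbb{Z}},\epsilon)$-embedding, $\mathrm{Widim}_\epsilon$ is monotone under restriction to closed invariant subsets, which gives the first inequality. The middle inequality is Theorem \ref{lw42} applied to $(Y,\varphi_1,\tilde d)$.

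The third inequality is the heart of the argument; I would attack it by direct construction of spanning sets. A path consisting of one horizontal and one vertical segment in the Bowen--Walters metric yields
\[
\tilde d\bigl((T^k x,s),(T^k y,s')\bigr)\le(1-s)\,d(T^k x,T^k y)+s\,d(T^{k+1}x,T^{k+1}y)+|s-s'|
\]
for every $k\ge 0$ and $s,s'\in[0,1]$. Hence whenever $d^{\mathbb{Z}}_{n+1}(x,y)<\epsilon$ in $(X,d)$ and $|s-s'|<\epsilon$ one has $\max_{0\le k\le n-1}\tilde d(\varphi_1^k(x,s),\varphi_1^k(y,s'))<2\epsilon$. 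Given an $(\epsilon,d,n+1)$-spanning set $S\subset X$ and an $\epsilon$-net $\{s_1,\dots,s_m\}\subset[0,1]$ with $m\le\lceil 1/\epsilon\rceil+1$, the product $S\times\{s_1,\dots,s_m\}$ is $(2\epsilon,\tilde d,n)$-spanning in $Y$, giving
\[
A(Y,2\epsilon,\tilde d,n)\le\bigl(\lceil 1/\epsilon\rceil+1\bigr)\cdot A(X,\epsilon,d,n+1).
\]
Applying $\tfrac{1}{n}\log$ followed by $\limsup_{n\to\infty}$ annihilates the height-cover prefactor, and $\liminf_{\epsilon\to 0}\tfrac{1}{|\log(2\epsilon)|}$ then produces $\mathrm{mdim}_M(Y,\varphi_1,\tilde d)\le\mathrm{mdim}_M(X,T,d)$, using $|\log(2\epsilon)|/|\log\epsilon|\to 1$. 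Combined with the hypothesis this closes the chain.

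The main obstacle is the geometric verification that the displayed horizontal-plus-vertical path really has the claimed length in the Bowen--Walters metric, with no hidden cost from the gluing $(z,1)\sim(Tz,0)$. This reduces to the local description of $\tilde d$ on the cell $X\times[0,1]$ before identification, together with the observation that a vertical segment of length less than one realises the flow-time distance $d_v$; once both are in hand, the remaining limit manipulations are routine bookkeeping.
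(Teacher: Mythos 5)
Your proposal is correct and follows essentially the same route as the paper: the cyclic chain $\mathrm{mdim}(X,T)\le\mathrm{mdim}(Y,\varphi_1)\le\mathrm{mdim}_M(Y,\varphi_1,\tilde d)\le\mathrm{mdim}_M(X,T,d)=\mathrm{mdim}(X,T)$ via the subsystem inclusion, Theorem~\ref{lw42}, and the spanning-set estimate $A(Y,\epsilon,\tilde d,n)\lesssim\epsilon^{-1}A(X,\epsilon/2,d,n+1)$. The only difference is that you spell out the horizontal-plus-vertical path bound in the Bowen--Walters metric which the paper treats as evident.
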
\begin{proof}
By Proposition \ref{rz} we have
$\mathrm{mdim}(Y,(\varphi_r)_{r\in\mathbb{R}})=\mathrm{mdim}(Y,\varphi_1)$.
Since $(X,T)$ is a subsystem of $(Y,T)=(Y,\varphi_1)$,
we have
$\mathrm{mdim}(X,T)\le\mathrm{mdim}(Y,\varphi_1)$.
Note that for every $r\in[0,1)$, $\varphi_r(X)$ is a $\varphi_1$-invariant closed subset of $Y$,
and $(\varphi_r(X),\varphi_1)$ can be regarded as a copy of $(X,T)$.
Let $\epsilon>0$ and $n\in\mathbb{N}$. If $d_{n+1}^\mathbb{Z}(x,y)\leq \frac\epsilon2$ and $|t-t'|\leq \frac\epsilon2$ for $0\le t,t'<1$ then $\tilde{d}_n^{\,\mathbb{Z}}((x,t),(y,t'))\leq \epsilon$. Thus it is easy to see $A(Y,\epsilon,\tilde{d},n)\le ([1/\epsilon]+1)\cdot A(X,\epsilon/2,d,n+1)$.
In particular $$\limsup_{n\to\infty}\frac{1}{n}\log A(Y,\epsilon,\tilde{d},n)\leq \limsup_{n\to\infty}\frac{1}{n}\log A(X,\epsilon/2,d,n)$$ and we obtain that $\mathrm{mdim}_M(Y,\tilde{d})\le\mathrm{mdim}_M(X,d)$.
By Theorem \ref{lw42} we know that $\mathrm{mdim}(Y,\varphi_1)\le\mathrm{mdim}_M(Y,\tilde{d})$.
Summarizing, we have
$$
\mathrm{mdim}(X,T)\le\mathrm{mdim}(Y,\varphi_1)\le\mathrm{mdim}_M(Y,\varphi_1,\tilde{d})$$$$
\le\mathrm{mdim}_M(X,T,d)=\mathrm{mdim}(X,T).
$$
This ends the proof.\end{proof}
We note that for general roof functions Proposition \ref{suspmdim} does not hold.
Indeed Masaki Tsukamoto has informed us that he has constructed
an example of a minimal topological dynamical system $(X,T)$ with compatible metric $d$ and $f\not\equiv 1:X\to(0,\infty)$ such that
$\mathrm{mdim}(X,T)=\mathrm{mdim}_M(X,d)=0$ but $\mathrm{mdim}_M(S_f X,\varphi_1,\tilde{d})>0$ (\cite{TsuPer}).
\begin{Problem}\label{prob:mdim1suspension=mdim}
Is Proposition \ref{suspmdim} always true
without assuming that there is a compatible metric $d$ on $X$
with $\mathrm{mdim}_M(X,d)=\mathrm{mdim}(X,T)$?
\end{Problem}\begin{Problem}
Is it possible to find a topological dynamical system $(X,T)$ with compatible metric $d$ and $f:X\to(0,\infty)$ such that
$\mathrm{mdim}(X,T)=0$ and $\mathrm{mdim}(S_f X,(\varphi_r)_{r\in\mathbb{R}})\neq 0.$
\end{Problem}In Proposition \ref{suspmdim},
if $(X,T)$ is minimal then $(Y,(\varphi_r)_{r\in\mathbb{R}})$ is minimal.
In particular, by Theorem \ref{lin43} we have the following:
\begin{Proposition}\label{mnm}
Suppose that $(X,T)$ is minimal and $(Y,\mathbb{R})$ is be the mapping torus over $(X,T)$ (the suspension generated by the roof function $1$). Then $(Y,\mathbb{R})$ is also minimal
and $\mathrm{mdim}(X,T)=\mathrm{mdim}(Y,\mathbb{R})$.
\end{Proposition}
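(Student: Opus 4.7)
The plan is to treat the two assertions of the proposition separately, and then combine them.

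First, I would verify minimality of $(Y,\mathbb{R})$ directly from minimality of $(X,T)$. Fix $(x,s),(y,t)\in Y$ and $\epsilon>0$. Since $(X,T)$ is minimal, there exists $n\in\mathbb{N}$ such that $T^{n}x$ is within $\epsilon$ of $y$ in the metric $d$ on $X$. Choosing $n$ large enough that we may also arrange $n+(t-s)>0$, we flow $(x,s)$ for time $r=n+(t-s)$ and land at $(T^{n}x,t)$, which is within $O(\epsilon)$ of $(y,t)$ in the Bowen--Walters metric $\tilde d_{S_{1}X}$ (the ``horizontal'' estimate, using that the base metric appears in $d_h$ at height $t$). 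Hence the orbit of $(x,s)$ is dense in $Y$, and $(Y,\mathbb{R})$ is minimal.

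Second, I would establish $\mathrm{mdim}(X,T)=\mathrm{mdim}(Y,\mathbb{R})$ by splitting into two cases depending on whether $(X,T)$ is aperiodic. If some point of $X$ is $T$-periodic, minimality forces $X$ to be a single finite periodic orbit; then $\mathrm{mdim}(X,T)=0$ and $Y$ is homeomorphic to a finite disjoint union of circles, hence finite dimensional, so that the earlier Proposition (finite-dimensional phase space implies zero mean dimension for flows) yields $\mathrm{mdim}(Y,\mathbb{R})=0$, and both sides agree.

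In the remaining, aperiodic case, Theorem \ref{lin43} (Lindenstrauss) supplies a compatible metric $d$ on $X$ for which $\mathrm{mdim}_M(X,T,d)=\mathrm{mdim}(X,T)$. This is precisely the hypothesis of Proposition \ref{suspmdim}, whose conclusion applied to the mapping torus $(Y,(\varphi_r)_{r\in\mathbb{R}})$ over $(X,T)$ gives $\mathrm{mdim}(X,T)=\mathrm{mdim}(Y,(\varphi_r)_{r\in\mathbb{R}})$, as required.

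I do not anticipate a genuine obstacle. The only mildly delicate point is the minimality argument for $(Y,\mathbb{R})$, which requires verifying that the flow time $r=n+(t-s)$ really does carry $(x,s)$ close to $(y,t)$ in the Bowen--Walters metric; the mean-dimension equality itself is then obtained by directly combining Theorem \ref{lin43} with Proposition \ref{suspmdim}, with the periodic (finite) case dispatched separately via the finite-dimensionality criterion.
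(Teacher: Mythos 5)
Your proof is correct and follows essentially the same route as the paper, which asserts minimality of the suspension as a standard fact and then combines Theorem~\ref{lin43} with Proposition~\ref{suspmdim}; you are in fact slightly more careful than the paper in that you explicitly isolate the degenerate case where $(X,T)$ is minimal but periodic, which is needed because Theorem~\ref{lin43} requires an extension of an \emph{aperiodic} minimal system. Two cosmetic remarks: when $(X,T)$ is minimal and periodic it is a single finite orbit, so $Y$ is one circle (of circumference equal to the period) rather than a finite disjoint union of circles, though either way it is finite-dimensional; and in the minimality argument the horizontal estimate $d_h((T^n x,t),(y,t))=(1-t)\,d(T^n x,y)+t\,d(T^{n+1}x,Ty)$ requires uniform continuity of $T$ (automatic by compactness of $X$) to control the second term, which is worth stating explicitly.
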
\begin{Proposition}
For every $c\in[0,+\infty]$ there is a minimal flow
$(X,(\varphi_r)_{r\in\mathbb{R}})$ such that
$\mathrm{mdim}(X,(\varphi_r)_{r\in\mathbb{R}})=c$.
\end{Proposition}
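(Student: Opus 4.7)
The plan is to reduce the problem to the $\mathbb{Z}$-case via the mapping torus construction. Given any minimal $\mathbb{Z}$-action $(X,T)$ with $\mathrm{mdim}(X,T)=c$, Proposition \ref{mnm} immediately produces a minimal flow $(Y,(\varphi_r)_{r\in\mathbb{R}})$---namely, the mapping torus over $(X,T)$---satisfying $\mathrm{mdim}(Y,\mathbb{R})=c$. Hence it suffices to exhibit, for every $c\in[0,+\infty]$, a minimal $\mathbb{Z}$-action whose mean dimension equals $c$.

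For $c=0$, I would take the irrational rotation on $S^1$: it is minimal, aperiodic, and of mean dimension zero (being finite-dimensional, by the $\mathbb{Z}$-analogue of Proposition 2.2). For $c\in(0,+\infty]$, I would appeal to the Lindenstrauss--Weiss constructions in \cite{LW} of aperiodic minimal subshifts of cubes $([0,1]^N)^\mathbb{Z}$ (or of $([0,1]^\mathbb{N})^\mathbb{Z}$ when $c=+\infty$) realizing any prescribed value of mean dimension. The underlying idea is a block construction that tunes, at each scale, the density of ``free'' coordinates per unit time so as to hit exactly $c$; for $c=+\infty$ one either takes an inverse limit or a diagonal combination of such systems with mean dimensions tending to infinity.

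The main obstacle lies entirely on the $\mathbb{Z}$-side, namely the construction of minimal subshifts of prescribed mean dimension, but this is by now classical and available in the literature. On the flow side, Proposition \ref{mnm} packages all the required work: it lifts minimality to the mapping torus and preserves mean dimension via Theorem \ref{lin43}, which applies because a minimal $\mathbb{Z}$-system of positive mean dimension is automatically aperiodic (a minimal system with a periodic orbit is finite, hence of mean dimension zero), while the case $c=0$ is handled directly by the rotation example above.
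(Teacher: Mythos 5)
Your proposal is correct and takes essentially the same route as the paper: pass from a minimal $\mathbb{Z}$-system of mean dimension $c$ (via Lindenstrauss--Weiss \cite[Proposition 3.5]{LW}) to a minimal flow using the mapping torus and Proposition \ref{mnm}. The paper simply invokes the single Lindenstrauss--Weiss proposition for all $c\in[0,+\infty]$ at once; your separate treatment of $c=0$ and the explicit aperiodicity check are careful but not a genuinely different argument.
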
\begin{proof}
By the $\mathbb{Z}$-version result due to Lindenstrauss and Weiss \cite[Proposition 3.5]{LW} there is
a minimal $\mathbb{Z}$-action $(Y,\mathbb{Z})$ such that $\mathrm{mdim}(Y,\mathbb{Z})=c$.
By Proposition \ref{mnm} we obtain a minimal flow $(X,\mathbb{R})$ with $\mathrm{mdim}(X,\mathbb{R})=c$.
\end{proof}

\section{An embedding conjecture}\label{sec:conj}
We now state the main embedding theorem of this paper.
We recall some necessary notions and results in Fourier analysis. A $C^{\infty}$ function $f:\mathbb{R}\to\mathbb{C}$, is said to be rapidly decreasing if there are constants $M_{n,m}>0$ such that $|f^{(m)}(x)| < M_{n,m} |x|^{-n}$ as $x\rightarrow \infty$, 
for all $n,m\in \mathbb{N}$. The space of such function is called the \textit{Schwartz space} and is denoted by $\mathcal{S}$. For $f\in \mathcal{S}$ 
the definitions of the Fourier transform and its inverse are given by:
$$
\mathcal{F}(f)(\xi)=\int_{-\infty}^\infty e^{-2\pi it\xi}f(t)dt, \,\,\,
\overline{\mathcal{F}}(f)(t)=\int_{-\infty}^\infty e^{2\pi it\xi}f(\xi)d\xi.
$$
One has $\mathcal{F}(\mathcal{S})=\mathcal{S}$, $\overline{\mathcal{F}}(\mathcal{S})=\mathcal{S}$ and for all $f\in\mathcal{S}$, $\overline{\mathcal{F}}(\mathcal{F}(f))=\mathcal{F}(\overline{\mathcal{F}}(f))=f$.
The operators
$\mathcal{F}$ and $\overline{\mathcal{F}}$ can be extended to tempered distributions in a standard way
(for details see \cite[Chapter 7]{Schwartz} and \cite[Chapters 3 \& 4]{Strichartz}).
The tempered distributions include in particular bounded continuous functions.

Let $a<b$ be real numbers.
We define $V[a,b]$ as the space of bounded continuous functions $f:\mathbb{R}\to\mathbb{C}$
satisfying $\mathrm{supp}\mathcal{F}(f)\subset[a,b]$. We denote $B_1(V[a,b])=\{f\in V[a,b]:\,||f||_{\infty}\le1\}$ and $B_1(V^{\mathbb{R}}[-a,a])=\{f\in B_1(V[-a,a]):\,f(\mathbb{R})\subset\mathbb{R}\}$.
One may show that $B_1(V[a,b])$ is a compact metric space
with respect to the distance:
$$\boldsymbol{d}(f_1,f_2)=\sum_{n=1}^\infty\frac{||f_1-f_2||_{L^\infty([-n,n])}}{2^n}.$$
This metric coincides with the standard topology of tempered distributions
(for details see \cite[Chapter 7, Section 4]{Schwartz}).
Let $\mathbb{R}=(\tau_r)_{r\in\mathbb{R}}$ act on $B_1(V[a,b])$ by the shift:
for every $r\in\mathbb{R}$ and $f\in B_1(V[a,b])$,
$(\tau_rf)(t)=f(t+r)$ for all $t\in\mathbb{R}$.
Thus we obtain a flow $(B_1(V[a,b]),\mathbb{R})$.

In \cite[Conjecture 1.2]{LT},
Lindenstrauss and Tsukamoto posed
the following conjecture:
\begin{Conjecture}\label{conj:Z}
Let $(X,T)$ be a $\mathbb{Z}$ dynamical system and $D$ an integer. For $r\in \mathbb{N}$, define $P_r(X,T)=\{x\in X:
\, rx=x\}$.
Suppose that for every $r\in\mathbb N$ it holds that
$\dim P_r(X,T)<\frac {rD}{2}$ and $\mathrm{mdim}(X,T)<\frac D2$.
Then $(X,T)$ can be embedded in the system $(([0,1]^D)^\mathbb{Z},\sigma)$.
\end{Conjecture}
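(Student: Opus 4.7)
The plan is to follow the signal-embedding paradigm that Lindenstrauss and Tsukamoto introduced when they stated this conjecture, and which Gutman--Tsukamoto vindicated in the minimal case. Namely, one tries to construct an equivariant embedding $F\colon X\to ([0,1]^D)^{\mathbb{Z}}$ of the form $F(x)=(\phi(T^k x))_{k\in\mathbb{Z}}$ for a well-chosen continuous $\phi\colon X\to[0,1]^D$; equivalently $\phi$ must separate every pair $x\ne y$ along their orbits. I would look for such $\phi$ inside a Baire-generic residual subset of $C(X,[0,1]^D)$, the main obstruction being that the ``dimensional budget'' $D/2$ coming from $\mathrm{mdim}(X,T)<D/2$ has to suffice to make the set of $\phi$ whose induced maps are $(d^{\mathbb{Z}}_N,\epsilon)$-embeddings into some finite-dimensional target both \emph{dense} and \emph{open} for each $\epsilon,N$.

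The first step is to treat the aperiodic part. On the closure of the aperiodic set one would use the Rokhlin/Ornstein--Weiss tower machinery to tile long orbit segments, then on each tower column implant band-limited signals drawn from the compact space $B_1(V^{\mathbb{R}}[-D/2,D/2])^D$ sampled at the integers. A perturbation argument in the spirit of Gutman--Tsukamoto shows that, because the sampling of band-limited functions at rate $1$ over a window of length $N$ realizes only roughly $DN/2$ effective degrees of freedom, one can generically achieve $(d^{\mathbb{Z}}_N,\epsilon)$-embeddings of the aperiodic dynamics at the cost $DN/2>\mathrm{mdim}(X,T)\cdot N$; the slack $D/2-\mathrm{mdim}(X,T)$ is exactly what gives openness of the embedding property and lets a countable intersection over $(\epsilon,N)$ remain residual.

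The second step is to handle each stratum $P_r(X,T)$ of $r$-periodic points. An $r$-periodic orbit in $([0,1]^D)^{\mathbb{Z}}$ is parametrized by a point of $[0,1]^{rD}$ modulo cyclic shift. Using the hypothesis $\dim P_r<rD/2$ together with a Menger--N\"obeling type embedding into $\mathbb{R}^{rD}$ equivariantly with respect to $\mathbb{Z}/r\mathbb{Z}$, one produces a continuous equivariant map of $P_r$ into the $r$-periodic fiber; the quantitative $rD/2$ bound is exactly the quantity beyond which generic equivariant maps become injective (cf.\ the constructions in \cite{LT,GT14,GQT}).

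The hard part, and what makes this conjecture still open in full generality, is the global synthesis: points that are aperiodic but come arbitrarily close to periodic orbits must be embedded in a way consistent with both constructions at once. I would attempt this by introducing a continuous ``clock'' measuring nearness to each $P_r$ (for instance via a partition-of-unity indexed by $r$), splitting the signal $\phi$ into a periodic part encoding the $P_r$-embeddings and a band-limited aperiodic part, and interpolating through the band-limited signal space $V[-D/2,D/2]$ introduced in Section~\ref{sec:conj}. The main obstacle is precisely verifying that this interpolation can be performed without loss in dimensional budget across all scales $r$ simultaneously, and that the periodic-point hypothesis $\dim P_r<rD/2$ suffices uniformly rather than merely pointwise; this is where new ideas beyond the Gutman--Tsukamoto Fourier-analytic toolkit would likely be needed, perhaps using tilings adapted to a hierarchy of ``almost periods.''
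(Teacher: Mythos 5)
The statement you were asked to ``prove'' is Conjecture~\ref{conj:Z}, which the paper attributes to Lindenstrauss and Tsukamoto and states as an \emph{open conjecture}; the paper offers no proof of it, and indeed no proof is known. Your submission is not a proof either, and to your credit you say so explicitly: you lay out the Gutman--Tsukamoto signal-embedding strategy, sketch how one would handle the aperiodic part (tower tilings plus band-limited perturbations, Baire category) and the periodic strata (equivariant Menger--N\"obeling type embeddings into the $rD$-dimensional periodic fibers), and then correctly identify the gluing of the two regimes across all periods simultaneously as the unsolved obstacle. That diagnosis matches the state of the art: the minimal case and the ``extension of an aperiodic subshift'' case are settled by these techniques, and the paper's own contribution on the flow side (Theorem~\ref{thm0}) is likewise restricted to extensions of the aperiodic solenoid $(S,\mathbb{R})$, precisely to avoid the periodic/aperiodic interface.

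So the only ``gap'' to report is the one you already named. To be precise about one further point: your phrase that $\dim P_r < rD/2$ is ``exactly the quantity beyond which generic equivariant maps become injective'' overstates what is currently known --- that threshold is motivated by a dimension count (the $r$-periodic fiber is $[0,1]^{rD}$ up to cyclic shift, and classical $2d+1$/Menger--N\"obeling heuristics suggest $rD/2$), but an equivariant genericity theorem at that sharp threshold, compatible across all $r$ and with the aperiodic perturbations, is part of what the conjecture asserts rather than an available lemma. Within this paper the role of Conjecture~\ref{conj:Z} is purely comparative: Section~\ref{sec:conj} reformulates it to motivate Conjecture~\ref{conj:R} for flows and proves a conditional implication, and Theorem~\ref{thm0} proves the flow analogue only in the aperiodic (solenoid-extension) case. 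If you were assigned this statement expecting a provable theorem, the assignment was mislabeled; if the goal was to articulate the known strategy and its limits, your write-up does that accurately.
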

By \cite[Proposition 3.3]{LW}, $\mathrm{mdim}(([0,1]^D)^\mathbb{Z},\sigma)= D$. It is not hard to see that for $r\in \mathbb{N}$, $$\dim P_r(([0,1]^D)^\mathbb{Z},\sigma)= rD.$$ Thus the above conjecture may be rephrased as if
$$\dim P_r(X,T)<\frac {\dim P_r(([0,1]^D)^\mathbb{Z},\sigma)}{2}$$ for all $r\in \mathbb{N}$ and $$\mathrm{mdim}(X,T)<\frac {\mathrm{mdim}(([0,1]^D)^\mathbb{Z},\sigma)}{2}$$ then $(X,T)\hookrightarrow(([0,1]^D)^\mathbb{Z},\sigma)$.
We expect that a similar phenomenon holds for flows where the role of $(([0,1]^D)^\mathbb{Z},\sigma)$ is played by $(B_1(V^{\mathbb{R}}[-a,a]),\mathbb{R})$. By \cite[Footnote 4]{GQT}, $\mathrm{mdim}(B_1(V^{\mathbb{R}}[-a,a]),\mathbb{R})=2a$.
For $r\in\mathbb{R}_{> 0}$ denote $$P_r(X,\mathbb{R})=\{x\in X:\,rx=x\}.$$
We now calculate $\dim P_r(B_1(V^{\mathbb{R}}[-a,a]),\mathbb{R})$.
\begin{Proposition}
Let $r>0$ then  $\dim P_r(B_1(V^\mathbb{R}[-a,a]))=2\lfloor ar\rfloor+1$.
\end{Proposition}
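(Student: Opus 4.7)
The plan is to identify $P_r(B_1(V^\mathbb{R}[-a,a]))$ explicitly as the closed unit ball inside a finite-dimensional real vector space of trigonometric polynomials, and then invoke standard dimension theory.

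First, I would characterize the elements of $P_r$. A function $f\in V^\mathbb{R}[-a,a]$ with $\tau_r f=f$ is a bounded continuous real-valued $r$-periodic function, so it admits a formal Fourier expansion $f(t)\sim\sum_{n\in\mathbb{Z}}c_n e^{2\pi int/r}$ with $|c_n|\le\|f\|_\infty$ and $c_{-n}=\overline{c_n}$. The Fejér sums $\sigma_N f$ converge uniformly to $f$, hence also as tempered distributions, so applying the continuous map $\mathcal{F}\colon \mathcal{S}'\to\mathcal{S}'$ (and using $\mathcal{F}(e^{2\pi i\xi_0 t})=\delta_{\xi_0}$) yields the distributional identity $\mathcal{F}(f)=\sum_{n\in\mathbb{Z}}c_n\delta_{n/r}$. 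The hypothesis $\mathrm{supp}\,\mathcal{F}(f)\subset[-a,a]$ then forces $c_n=0$ whenever $|n|>ar$.

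Next, I would count the free parameters. The integers $n$ satisfying $|n|\le ar$ form the set $\{-\lfloor ar\rfloor,\dots,\lfloor ar\rfloor\}$, of cardinality $2\lfloor ar\rfloor+1$ (regardless of whether $ar$ is itself an integer). Under the reality constraint $c_{-n}=\overline{c_n}$, the coefficient $c_0$ contributes one real parameter and each pair $(c_n,c_{-n})$ with $1\le n\le\lfloor ar\rfloor$ contributes two (namely $\mathrm{Re}(c_n)$ and $\mathrm{Im}(c_n)$). Hence the collection
\[
W=\Bigl\{\,c_0+\sum_{n=1}^{\lfloor ar\rfloor}\bigl(c_n e^{2\pi int/r}+\overline{c_n}e^{-2\pi int/r}\bigr):\,c_0\in\mathbb{R},\,c_n\in\mathbb{C}\,\Bigr\}
\]
is a real vector space of dimension $d:=2\lfloor ar\rfloor+1$, and by Step~1, $P_r(B_1(V^\mathbb{R}[-a,a]))=\{f\in W:\|f\|_\infty\le 1\}$.

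Finally, I would compute the dimension. Since $W$ is finite-dimensional, all Hausdorff vector topologies on $W$ coincide, so the topology inherited from the metric $\boldsymbol{d}$ on $B_1(V^\mathbb{R}[-a,a])$ agrees with the $L^\infty$-norm topology on $W$ (indeed, for $r$-periodic functions $g$ the seminorms $\|g\|_{L^\infty([-n,n])}$ stabilize to $\|g\|_\infty$ once $2n\ge r$). Thus $P_r$ is linearly homeomorphic to the closed unit ball of a norm on $\mathbb{R}^d$, i.e.\ a convex body with nonempty interior, and therefore has Lebesgue covering dimension exactly $d=2\lfloor ar\rfloor+1$.

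The principal technical point is Step~1: although the formula $\mathcal{F}(f)=\sum c_n\delta_{n/r}$ is folklore, $f$ lies in neither $L^1$ nor $L^2$, so some care is needed to run the argument in $\mathcal{S}'$ via Fejér (or equivalently Abel) summation. Once this identification is in hand, the remainder is elementary linear algebra together with the standard fact that a $d$-dimensional convex body has Lebesgue covering dimension $d$.
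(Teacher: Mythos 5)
Your proof is correct and follows essentially the same route as the paper's: expand the $r$-periodic element of $B_1(V^\mathbb{R}[-a,a])$ in a Fourier series, pass to the distributional Fourier transform to conclude $c_n=0$ for $|n|>ar$, and count the surviving real parameters. The minor differences are that you justify convergence by Fejér summation (valid for any continuous periodic function) whereas the paper appeals to smoothness from the entire extension, and that you spell out the final dimension step explicitly (the set is the unit ball of a $(2\lfloor ar\rfloor+1)$-dimensional normed space, hence a convex body of that covering dimension) whereas the paper invokes a Vandermonde argument to establish linear independence of the exponentials and leaves the last step implicit. These are presentational variations, not a different strategy.
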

\begin{proof}
  Let $f\in B_1(V^\mathbb{R}[-a,a])$ with $f(x)=f(x+r)$ for all $x\in\mathbb{R}$.
In particular we have a periodic $f\in C^\infty(\mathbb{R},\mathbb{R})$, being a restriction of a holomorphic function,
and hence the Fourier series representation of $f$, $f(x)=\sum_{k=-\infty}^\infty c_ke^\frac{2\pi ikx}{r}$, converges uniformly to $f$
and $c_{-k}=\overline{c_k}$ for all $k$. Since $\mathcal{F}(f)=c_0\mathcal{F}(1)+\sum_{k=1}^\infty
c_k\mathcal{F}(e^\frac{2\pi ikt}{r})+\overline{c_k}\mathcal{F}(e^\frac{-2\pi ikt}{r})$
is supported in $[-a,a]$, we have $c_k=0$ for $|k|>ar$. Let $N=\lfloor ar \rfloor$.
Choose $x_0<x_1<x_2<\dots<x_N$ so that $e^\frac{2\pi i\cdot x_i}{r}\neq e^\frac{2\pi i\cdot x_j}{r}$ for $i\neq j$. The Vandermonde matrix formula indicates that $\det\left(e^\frac{2\pi i\cdot k x_l}{r}\right)_{l,k=0}^{N}\neq 0$.
This implies that  the functions $e^\frac{2\pi ikx}{r}$, $0\le k\le N$ are linearly independent. Thus, we conclude that $\dim P_r(B_1(V^\mathbb{R}[-a,a]))=2\lfloor ar\rfloor+1$.
\end{proof}
We now conjecture:

\begin{Conjecture}\label{conj:R}
Let $(X,\mathbb R)$ be a flow and $a>0$ a real number. Suppose that $\mathrm{mdim}(X,\mathbb{R})<a$ and for every $r\in\mathbb R$, $\dim P_r(X,\mathbb{R})<\lfloor ar\rfloor+\frac{1}{2}$. Then $(X,\mathbb R)$ can be embedded in
the flow $(B_1(V^{\mathbb{R}}[-a,a]),\mathbb{R})$.
\end{Conjecture}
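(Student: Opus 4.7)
The plan is to adapt the Baire-category strategy used for the $\mathbb{Z}$-embedding theorem in \cite{GQT} to the real-flow setting. Let $\mathrm{Map}_{\mathbb{R}}(X,B_1(V^{\mathbb{R}}[-a,a]))$ denote the complete metric space of $\mathbb{R}$-equivariant continuous maps from $X$ to the target, with the uniform distance inherited from $\boldsymbol{d}$. The set of equivariant embeddings is the countable intersection $\bigcap_{n\ge 1} E_{1/n}$ of the open sets $E_\epsilon$ of equivariant $(d,\epsilon)$-embeddings, so by Baire it suffices to show each $E_\epsilon$ is dense.

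An equivariant $f \in \mathrm{Map}_{\mathbb{R}}(X,B_1(V^{\mathbb{R}}[-a,a]))$ is completely determined by $g(x) := f(x)(0)$, since equivariance forces $f(x)(r) = g(rx)$; the band-limit condition on $f(x)$ becomes a spectral condition on the orbit restrictions $t \mapsto g(tx)$. Density of $E_\epsilon$ therefore reduces to a perturbation problem: given such a $g$ and $\delta > 0$, produce $g'$ with $\|g - g'\|_\infty < \delta$ whose induced equivariant map separates all pairs $x_1 \ne x_2$ with $d(x_1,x_2) \ge \epsilon$. Choose $T$ large enough that $\mathrm{Widim}_\epsilon(X,d_T)/T$ is close to $\mathrm{mdim}(X,\mathbb{R}) < a$, and seek $g'$ in general position inside the (roughly $2aT$-dimensional, by Paley--Wiener) space of band-limited templates of length $T$. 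The strict inequality $\mathrm{mdim}(X,\mathbb{R}) < a$ supplies the Menger--N\"obeling/transversality slack to push the perturbation through on the ``aperiodic'' part of $X$, exactly as in \cite{GQT,GT14,LT}.

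The periodic-point hypothesis governs the behavior near each closed $\mathbb{R}$-invariant set $P_r(X,\mathbb{R})$. The proposition immediately preceding the conjecture yields $\dim P_r(B_1(V^{\mathbb{R}}[-a,a])) = 2\lfloor ar \rfloor + 1$, and the hypothesis $\dim P_r(X,\mathbb{R}) < \lfloor ar \rfloor + \tfrac{1}{2}$ is precisely the transversality threshold inside the finite-dimensional space of $r$-periodic elements of $B_1(V^{\mathbb{R}}[-a,a])$, Fourier-expanded as $\sum_{|k| \le \lfloor ar \rfloor} c_k e^{2\pi i k t / r}$ as in the proof of that proposition. The main obstacle will be assembly: one must stitch the aperiodic perturbation together with local perturbations supported near each $P_r$ into a single globally band-limited $g'$, uniformly over the continuum of periods $r>0$, while preserving the $(d,\epsilon)$-separation already achieved on each piece. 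In the $\mathbb{Z}$-case the relevant set of periods is already countable, which makes such a construction tractable; for $\mathbb{R}$-flows the continuum of periods appears to demand genuinely new uniform Fourier-analytic estimates for almost-periodic band-limited functions. This is presumably why the authors only prove the weaker ``embedding after extension'' theorem, which circumvents the periodic-point issue via a suspension construction together with the key proposition of Section \ref{sec:embedding}, and leave the present statement as a conjecture.
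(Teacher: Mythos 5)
You have correctly identified that this statement is a Conjecture in the paper, with no proof supplied; the authors only establish it for flows that factor onto the solenoid $(S,\mathbb{R})$ (Theorem \ref{thm0} and its Corollary), and leave the general case open. Your strategic sketch --- Baire category in the completely metrizable space of equivariant maps, reduction to density of the open sets $E_\epsilon$ of $(d,\epsilon)$-embeddings, perturbation inside a roughly $2aT$-dimensional space of band-limited templates using $\mathrm{Widim}_\epsilon(X,d_T)/T < a$ together with the $M/2$ threshold of the Gutman--Tsukamoto perturbation lemma, and a separate transversality argument near each $P_r$ --- is essentially the strategy the paper deploys for the partial result (Theorem \ref{thm71} and Proposition \ref{ppppp}), and your diagnosis that the uncountable set of possible periods is the genuine obstruction to a full proof for $\mathbb{R}$-flows is sound. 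So your proposal is honest rather than incorrect: you are not offering a proof, and the paper does not contain one either.

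One small correction to your closing paragraph: the ``embedding after extension'' result (Theorem 5.3) circumvents periodicity by taking the product $X\times S$ with the solenoid, not via a suspension. Suspensions appear in Section \ref{sec:construction} for a different purpose (constructing minimal flows of prescribed mean dimension and relating flow mean dimension to $\mathbb{Z}$-mean dimension), and the solenoid factor $\Phi:X\to S$ is what supplies the aperiodic marker structure that synchronizes the perturbation $h(x)(t)$ across the flow in the proof of Proposition \ref{ppppp}. That marker mechanism is precisely what has no substitute when $X$ itself has periodic orbits with a continuum of periods, which is why the conjecture remains open.
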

\begin{Problem}
Does Conjecture \ref{conj:R} imply Conjecture \ref{conj:Z}? Does Conjecture \ref{conj:Z} imply Conjecture \ref{conj:R}?
\end{Problem}
We give a very partial answer:

\begin{Proposition}
Assume Conjecture \ref{conj:R} holds. Let $(X,T)$ be a t.d.s such that:
\begin{enumerate}[i.]
  \item $\exists D\in \mathbb N$, $\mathrm{mdim}(X,T)<\frac D2$,
  \item $\exists b\in \mathbb R$, $b<\frac D2$ and $\forall r> \frac{3}{D-2b}, \dim P_r(X,T)<br$,
  \item $\forall r\leq \frac{1}{D-2b}$, $P_r(X,T)=\emptyset$.
      \item
      $\mathrm{mdim}(S_1 X,\mathbb{R})=\mathrm{mdim}(X,T)$
\end{enumerate}
 Then $(X,T)$ can be embedded in the system $(([0,1]^D)^\mathbb{Z},\sigma)$.
\end{Proposition}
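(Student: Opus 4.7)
The plan is to build a bridge through the mapping torus $Y = S_1 X$: apply Conjecture \ref{conj:R} to embed the induced flow $(Y,\mathbb{R})$ into $(B_1(V^{\mathbb{R}}[-a,a]),\mathbb{R})$ for an appropriately chosen $a<D/2$, and then sample at rate $D$ to land in $(([0,1]^D)^{\mathbb{Z}},\sigma)$. By hypothesis (iv) combined with (i) we have $\mathrm{mdim}(Y,\mathbb{R})=\mathrm{mdim}(X,T)<D/2$, and the base slice $\{(x,0):x\in X\}\subset Y$ together with the time-$1$ map $\varphi_1$ recovers $(X,T)$ as a sub-$\mathbb{Z}$-system; this is what lets the $\mathbb{R}$-embedding descend to the desired $\mathbb{Z}$-embedding.

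First I would compute the periodic structure of $(Y,\mathbb{R})$. Since the roof is constantly $1$, a point $(x,t)\in Y$ satisfies $\psi_r(x,t)=(x,t)$ with $r>0$ only when $r$ is a positive integer $n$ with $T^n x=x$; consequently $P_r(Y,\mathbb{R})=\emptyset$ for $r\notin\mathbb{N}$, and for $n\in\mathbb{N}$ the set $P_n(Y,\mathbb{R})$ is the mapping torus of $T|_{P_n(X,T)}$, so $\dim P_n(Y,\mathbb{R})=\dim P_n(X,T)+1$. Then I would fix $a$ in the open interval $(\max(\mathrm{mdim}(X,T),\,(D+b)/3),\,D/2)$, which is nonempty because $(D+b)/3<D/2$ is equivalent to $b<D/2$ (part of (ii)) and because of (i), and attempt to verify the hypothesis of Conjecture \ref{conj:R} for $(Y,\mathbb{R})$. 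The mean-dimension bound is immediate. The periodic-dimension bound $\dim P_r(Y,\mathbb{R})<\lfloor ar\rfloor+\tfrac{1}{2}$ is trivial for non-integer $r>0$; for integer $n\leq 1/(D-2b)$ it follows from (iii) (which forces $P_n(Y,\mathbb{R})=\emptyset$); and for integer $n>3/(D-2b)$ the strict inequality $(a-b)n>1$ (guaranteed by $a-b>(D-2b)/3$) combined with $\dim P_n(X,T)<bn$ from (ii) yields $\lceil bn\rceil\leq\lfloor an\rfloor$, hence $\dim P_n(X,T)+1\leq\lceil bn\rceil\leq\lfloor an\rfloor<\lfloor an\rfloor+\tfrac{1}{2}$. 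For the remaining ``gap'' integers $n\in(1/(D-2b),3/(D-2b)]$, the plan is to exploit the monotonicity $P_n(X,T)\subseteq P_{kn}(X,T)$ for the smallest $k\in\{2,3\}$ with $kn>3/(D-2b)$, giving $\dim P_n(X,T)\leq\dim P_{kn}(X,T)<bkn\leq 3bn$, and then to combine this with the chosen $a$.

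Granting these verifications, Conjecture \ref{conj:R} produces an equivariant flow embedding $\iota:(Y,\mathbb{R})\hookrightarrow(B_1(V^{\mathbb{R}}[-a,a]),\mathbb{R})$. Restricting $\iota$ to the time-$1$ map and pre-composing with the inclusion $(X,T)\hookrightarrow(Y,\varphi_1)$ gives an equivariant $\mathbb{Z}$-embedding $(X,T)\hookrightarrow(B_1(V^{\mathbb{R}}[-a,a]),\tau_1)$. To reach the cubical shift I would then sample at rate $D$: define $\Phi:B_1(V^{\mathbb{R}}[-a,a])\to([-1,1]^D)^{\mathbb{Z}}$ by $\Phi(f)=((f(n+k/D))_{k=0}^{D-1})_{n\in\mathbb{Z}}$. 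This is continuous and intertwines $\tau_1$ with $\sigma$, and since $2a<D$ the Nyquist-Shannon sampling theorem for functions in $V[-a,a]$ guarantees injectivity. An affine rescaling $[-1,1]\to[0,1]$ then embeds the image into $(([0,1]^D)^{\mathbb{Z}},\sigma)$, completing the proof.

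The main obstacle I anticipate is the gap argument in the periodic-dimension verification. Hypotheses (ii) and (iii) leave a genuine range of integers $n\in(1/(D-2b),3/(D-2b)]$ uncontrolled, and the inclusion trick $P_n\subseteq P_{kn}$ with $k\leq 3$ yields only $\dim P_n<3bn$, which forces the required inequality $\dim P_n+1\leq\lfloor an\rfloor$ only when the slack $a-3b$ is positive, i.e., essentially when $b<D/6$. The delicate interplay between this available slack, the integer rounding in $\lfloor an\rfloor+\tfrac{1}{2}$, and the specific constants $1$ and $3$ appearing in (ii) and (iii) is the heart of the argument, and is precisely why the proposition is framed as a ``very partial'' answer.
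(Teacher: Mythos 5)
Your plan coincides with the paper's: suspend to the mapping torus $Y=S_1X$, invoke Conjecture~\ref{conj:R} for the flow $(Y,\mathbb{R})$ with band parameter $c$ just below $D/2$, restrict the resulting flow embedding to the time-$1$ map, and finally pass from $(B_1(V^{\mathbb{R}}[-c,c]),\tau_1)$ into the cubical shift by sampling at integer rate $D$. The paper delegates this last step to \cite[Lemma 2.4]{GQT}, which is in substance the Nyquist--Shannon argument you spell out, so that is not a real divergence. Your analysis of the periodic structure of the suspension (periods are positive integers, $\dim P_n(S_1X,\mathbb{R})=\dim P_n(X,T)+1$ when nonempty) and your verification of the $\lfloor ar\rfloor+\tfrac12$ condition for integers $n>3/(D-2b)$ also agree with the paper, modulo a slightly different choice of constant: the paper takes $c$ close enough to $D/2$ so that the least integer $r_0>3/(D-2b)$ satisfies $(c-b)r_0\geq\tfrac32$, while you pick $a>(D+b)/3$ and exploit integrality via $\lceil bn\rceil\leq\lfloor an\rfloor$.

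The obstacle you flag is genuine, and it is worth being aware that the paper's own proof, as written, does not resolve it either. The published argument only verifies the hypothesis $\dim P_r(S_1X,\mathbb{R})<\lfloor cr\rfloor+\tfrac12$ for integers $r\geq r_0>3/(D-2b)$, and only derives emptiness for integers $r\leq 1/(D-2b)$ from hypothesis (iii); integers $r$ in the gap $(1/(D-2b),\,3/(D-2b)]$ are not discussed. (Note that for typical parameters this gap is nonempty: e.g.\ already $r=1$ falls in it whenever $1<D-2b\leq3$.) Your attempted repair via $P_n(X,T)\subseteq P_{kn}(X,T)$, $k\in\{2,3\}$, gives only $\dim P_n(X,T)<3bn$, which as you correctly compute forces the needed inequality only when $a-3b$ is large enough, and hypotheses (i)--(iii) do not guarantee this. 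Whether the paper's Proposition tacitly assumes something like $D-2b>3$ (which makes the gap empty), or whether condition (iii) was meant to read $r\leq 3/(D-2b)$, is unclear from the text; but the sticking point you identified is a feature of the published proof, not a defect of your write-up alone.
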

\begin{proof}
 Note that the periodic orbits of the suspension $(S_1 X,\mathbb{R})$ have positive integer lengthes and orbits of length $r\in \mathbb{N}$ in $S_1 X$ corresponds to the $r$-periodic points of $(X,T)$ so that $P_r(X,T)=\emptyset$ implies $P_r(S_1 X,\mathbb{R})=\emptyset$ and $P_r(X,T)\neq\emptyset$ implies:
$$\dim P_r(S_1 X,\mathbb{R})=\dim P_r(X,T)+1.$$
Consider the following sequence of embeddings:
$$(X,T)\stackrel{(1)}{\hookrightarrow}  (S_1 X, \psi_1)\stackrel{(2)}{\hookrightarrow} (B_1(V^{\mathbb{R}}[-c,c]),\sigma)\stackrel{(3)}{\hookrightarrow} (([-1,1]^{D})^{\mathbb{Z}},\sigma).$$
Embedding (1) is the trivial embedding from  $(X,T)$  into $(S_1 X, \psi_1)$ where $\psi_1$ is the time-$1$ map. Embedding (3) is a consequence of \cite[Lemma 2.4]{GQT} as long as $c<\frac D2$. We now justify Embedding (2). This $\mathbb{Z}$-embedding is induced from an $\mathbb{R}$-embedding $(S_1 X,\mathbb{R})\hookrightarrow (B_1(V^{\mathbb{R}}[-c,c]),\mathbb{R})$ whose existence follows from Conjecture \ref{conj:R} which we assume to hold. We need to verify the conditions appearing in Conjecture \ref{conj:R}. Let $c$ be a real number such that $\mathrm{mdim}(X,T)<c<\frac D2$. Thus $\mathrm{mdim}(S_1 X,\mathbb{R})=\mathrm{mdim}(X,T)<c$. Let $r$ be an integer such that $r> \frac{3}{D-2b}$, then $\dim P_r(X,\mathbb{R})<br+1$, whereas $\frac 12\dim P_r(B_1(V^{\mathbb{R}}[-c,c]),\shift)=\lfloor rc\rfloor+\frac 12=cr-t_r +\frac 12$, where $0\leq t_r<1$. Note $cr-t_r +\frac 12\geq br+1$ if $(c-b)r\geq\frac 32>t_r+\frac 12$, i.e if $r\geq \frac {3}{2(c-b)}$. Thus it is enough to check it for the minimal integer $r_0$ such that $r_0> \frac{3}{D-2b}=\frac{3}{2(\frac D2-b)}$. We thus choose $b<c<\frac D2$ such that $r_0\geq \frac{3}{2(c-2b)}> \frac{3}{2(\frac D2-2b)}$ and this ends the proof.
\end{proof}

\section{An embedding theorem}\label{sec:embedding}
For every $n\in\mathbb{N}$ denote by $S_n$ the circle of circumference $n!$ (identified with $[0,n!]$).
Let $\mathbb{R}$ act on
$\prod_{n\in\mathbb{N}}S_n$ as follows:
$(x_i)_i\mapsto(x_i+r\;(\text{mod }i!))_i$, $r\in\mathbb{R}$.
Define the {\bf solenoid} (\cite[V.8.15]{NS})
$$S=\{(x_n)_n\in\prod_{n\in\mathbb{N}}S_n:x_n=x_{n+1}\;(\text{mod }n!)\}.$$
It is easy to see that $(S,\mathbb{R})$ is a (minimal) flow.

The following definitions are standard: A continuous surjective map $\psi:(X,\mathbb{Z})\rightarrow (Y,\mathbb{Z})$ is called an \textbf{extension} (of t.d.s) if for all $n\in \mathbb{Z}$ and $x\in X$ it holds $\psi(n.x)=n.\psi(x)$. A continuous surjective map  $\psi:(X,\mathbb{R})\rightarrow (Y,\mathbb{R})$ is called an \textbf{extension} (of flows) if for all $r\in \mathbb{R}$ and $x\in X$ it holds $\psi(r.x)=r.\psi(x)$.

The following embedding result, which is the main result of this paper, provides a partial positive answer to Conjecture \ref{conj:R}. This result may be understood as an analog for flows of \cite[Corollary 1.8]{GT14} which states that Conjecture \ref{conj:Z} is true for any $\mathbb{Z}$-system which is an extension of an aperiodic subshift, i.e. an aperiodic subsystem of a symbolic shift $\left(\{1,2,\dots,l\}^{\mathbb{Z}},\sigma\right)$ for some $l\in \mathbb{N}$.

\begin{Theorem}\label{thm0}
Let $a<b$ be two real numbers.
If $(X,\mathbb{R})$ is an extension of
$(S,\mathbb{R})$ and $\mathrm{mdim}(X,\mathbb{R})<b-a$,
then $(X,\mathbb{R})$ can be embedded in
$(B_1(V[a,b]),\mathbb{R})$.
\end{Theorem}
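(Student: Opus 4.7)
The plan is to run the Baire category template of Gutman and Tsukamoto in the flow setting. Let $\mathcal{M}$ denote the space of $\mathbb{R}$-equivariant continuous maps $F:X\to B_1(V[a,b])$ with the sup-metric induced by $\boldsymbol{d}$. Since $B_1(V[a,b])$ is compact and equivariance is a closed condition, $\mathcal{M}$ is a complete metric space and hence Baire. Fix a compatible metric $d$ on $X$, and for each $\epsilon>0$ let $\mathcal{E}_\epsilon\subset\mathcal{M}$ be the set of $(d,\epsilon)$-embeddings; $\mathcal{E}_\epsilon$ is open by the standard compactness argument. Granted that each $\mathcal{E}_\epsilon$ is also dense, the intersection $\bigcap_{k\ge 1}\mathcal{E}_{1/k}$ is a dense $G_\delta$ whose elements are injective equivariant continuous maps, each automatically an $\mathbb{R}$-equivariant topological embedding because $X$ is compact. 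The theorem therefore reduces to density of $\mathcal{E}_\epsilon$ for every $\epsilon>0$, which I expect to be the content of the key proposition proved in Section \ref{sec:prod embed}.

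To prepare the ground for the key proposition, I would exploit the solenoid factor $\pi:X\to S$ as a source of equivariant tilings of $\mathbb{R}$. For each $n\in\mathbb{N}$, composing $\pi$ with the projection to the $n$-th coordinate yields a factor map $\pi_n:X\to S_n$ to the circle of circumference $n!$, and the fiber $\Sigma_n=\pi_n^{-1}(0)$ is a closed global cross-section whose first-return time is constantly $n!$, by equivariance of $\pi_n$. The first return map $T_n=\varphi_{n!}|_{\Sigma_n}$ realizes $(X,\mathbb{R})$ as the mapping torus of $(\Sigma_n,T_n)$ rescaled by $n!$, so every orbit of $X$ is canonically partitioned into consecutive closed intervals of length $n!$, and an $\mathbb{R}$-equivariant map $X\to C(\mathbb{R},\mathbb{C})$ is determined by a single $T_n$-equivariant continuous map $\Sigma_n\to C([0,n!],\mathbb{C})$ compatible with the flow across tile boundaries.

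The heart of the argument, and the place where the strict inequality $\mathrm{mdim}(X,\mathbb{R})<b-a$ is consumed, is the production, given $F_0\in\mathcal{M}$ and $\epsilon>0$, of some $F\in\mathcal{E}_\epsilon$ with $\boldsymbol{d}(F,F_0)<\epsilon$. The strategy is to fix $n$ so large that the widim estimate supplied by the mean dimension hypothesis furnishes a continuous scalar observable on $\Sigma_n$ taking values in a finite-dimensional target whose dimension grows at rate strictly less than $b-a$ per unit tile length, and then to assemble this observable into a band-limited function via a Paley-Wiener interpolation kernel with Fourier support contained in $[a,b]$. The strict gap between $\mathrm{mdim}(X,\mathbb{R})$ and $b-a$ is precisely what provides the slack needed to simultaneously enforce $\|F(x)\|_\infty\le 1$, the Fourier-support constraint, and $(d,\epsilon)$-separation of points on a suitably long tile.

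The principal obstacle I anticipate is the tension between the locality of the tile structure and the non-locality of functions in $V[a,b]$: a band-limited function cannot be prescribed tile-by-tile without endangering equivariance, boundedness, or the Fourier support constraint, so the interpolation kernel must be designed carefully so that tile-wise choices glue into a bona fide element of $V[a,b]$. Reconciling these three constraints with the widim-based separation count at the single global scale $n!$ is precisely what the key proposition of Section \ref{sec:prod embed} accomplishes; once it is granted, the Baire category reduction outlined above closes the proof of Theorem \ref{thm0}.
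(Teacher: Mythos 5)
Your Baire-category framing (complete metric space of equivariant maps, openness of the $(d,\epsilon)$-embeddings, reduction to density) matches the paper's skeleton, but you have misidentified what the key proposition of Section~\ref{sec:prod embed} actually delivers, and this leaves a genuine gap. The paper's Proposition~\ref{ppppp} does \emph{not} show that a generic $f\in C_\mathbb{R}(X,B_1(V[a,b]))$ is by itself a $(d,\epsilon)$-embedding; it shows that for a generic $f$ the \emph{pair} $(f,\Phi):X\to B_1(V[a,b])\times S$ is a $(d,\epsilon)$-embedding, with the solenoid factor $\Phi$ carried along explicitly. This is not a cosmetic difference. The perturbation $g=f+h$ in the proof is built using the solenoid coordinate $\Phi(x)_N$ to place the interpolation grid, and the final separation argument starts from $(g(x),\Phi(x))=(g(x'),\Phi(x'))$ and immediately uses $\Phi(x)_N=\Phi(x')_N$ to line up the two grids so that sampled values of $g$ recover the widim observable $G^\mathbb{C}$. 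If you only assume $g(x)=g(x')$, with no control on the solenoid coordinate, you cannot read off $G^\mathbb{C}$ consistently, and the separation fails. So the set $\mathcal{E}_\epsilon$ you define (pure $(d,\epsilon)$-embeddings into $B_1(V[a,b])$) is not shown to be dense by the key proposition, and your reduction does not close.

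What the paper does instead, and what your plan omits, is a frequency-band split $a<c_1<c_2<b$ with $\mathrm{mdim}(X,\mathbb{R})<c_1-a$: Theorem~\ref{thm71} gives an embedding $(X,\mathbb{R})\hookrightarrow(B_1(V[a,c_1])\times S,\mathbb{R}\times\mathbb{R})$, Proposition~\ref{prop:sol} gives an explicit embedding $(S,\mathbb{R})\hookrightarrow(B_1(V[c_2,b]),\mathbb{R})$ by a lacunary sum of exponentials $\sum_{n\ge m}2^{-n}e^{2\pi i(t+x_n)/n!}$, and then the two band-limited pieces are merged into $B_1(V[a,b])$ via $(\varphi_1,\varphi_2)\mapsto\tfrac12(\varphi_1+\varphi_2)$ (injective because the Fourier supports $[a,c_1]$ and $[c_2,b]$ are disjoint). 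Your proposal uses the solenoid only as an internal device (equivariant tilings and cross-sections $\Sigma_n$) but never re-injects it as data into the target function space, nor reserves spectral room for it; consequently your interpolation kernel would need to occupy the full band $[a,b]$, leaving no disjoint sub-band for the solenoid and no way to distinguish points in different solenoid fibers. To repair the proposal you would need to (i) prove the product-embedding statement $(f,\Phi)$ rather than the single-map statement, (ii) add the explicit solenoid embedding into a disjoint sub-band, and (iii) combine by averaging.
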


\begin{cor}
Conjecture \ref{conj:R} holds for $(X,\mathbb{R})$ which is an extension of
$(S,\mathbb{R})$.
\end{cor}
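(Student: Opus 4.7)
The plan is to follow the Baire-category paradigm used by Gutman and Tsukamoto for $\mathbb{Z}$-systems, adapted to $\mathbb{R}$-flows and band-limited targets. Fix a compatible metric $d$ on $X$ and let $\mathcal{E}$ denote the space of $\mathbb{R}$-equivariant continuous maps $\phi : X \to B_1(V[a,b])$, equipped with the supremum metric induced by $\boldsymbol{d}$. Since $B_1(V[a,b])$ is a compact metric space and equivariance is a closed condition, $\mathcal{E}$ is a complete metric space, hence a Baire space. A map $\phi \in \mathcal{E}$ is an embedding iff it is injective, and injectivity is equivalent to $\phi$ being a $(d,1/n)$-embedding for every $n\in\mathbb{N}$. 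Each such $(d,\epsilon)$-embedding condition is open in $\mathcal{E}$, so the theorem reduces by Baire's theorem to showing that the set of $(d,\epsilon)$-embeddings is dense in $\mathcal{E}$ for every $\epsilon > 0$.

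I would isolate this density statement as the key proposition of Section~\ref{sec:embedding}, with its proof deferred to Section~\ref{sec:prod embed}: for every $\phi \in \mathcal{E}$ and every $\epsilon > 0$ there exists $\psi \in \mathcal{E}$ with $\sup_{x\in X}\boldsymbol{d}(\phi(x),\psi(x)) < \epsilon$ that is also a $(d,\epsilon)$-embedding. I would not attempt to symmetrize $[a,b]$ by a frequency shift, since multiplication by $e^{2\pi i c t}$ is not $\mathbb{R}$-shift-equivariant; the asymmetric interval must be handled directly, with $b-a$ playing the role of the available bandwidth.

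To prove the key proposition, I would exploit the extension $\pi: X \to S$ to produce Rohlin-type towers. Since $S$ is the inverse limit of the rotations on the circles $S_n$ of circumference $n!$, for each large $n$ the preimage in $X$ of $0\in S_n$ under the composition $X \to S \to S_n$ is a closed cross-section $B_n$ for the $\mathbb{R}$-flow with first-return time exactly $n!$, and $(X,\mathbb{R})$ is equivariantly homeomorphic to the suspension of the induced return map $T_n : B_n \to B_n$ with constant roof $n!$. I would define the perturbation $\psi$ on the fundamental domain $B_n \times [0,n!)$ and extend by equivariance. Concretely, $\psi = \phi + \delta\cdot F$ for small $\delta > 0$ and an auxiliary equivariant $F : X \to V[a,b]$ built in two stages: first, a $(d_{n!},\epsilon)$-embedding $B_n \to \mathbb{R}^{K}$ of the cross-section into a Euclidean cube of dimension $K$ strictly less than $2(b-a)\cdot n!$, whose existence for $n$ large follows from $\mathrm{mdim}(X,\mathbb{R}) < b-a$ together with the Menger--N\"obeling-type dimension-doubling inherent in the definition of $\mathrm{Widim}_\epsilon$; and second, a Shannon-type interpolation that encodes these $K$ real coordinates into a band-limited function with spectrum in $[a,b]$ and essential time-support in a window of length $n!$. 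Transported by equivariance, this produces an $F : X \to V[a,b]$ whose orbit functions $t \mapsto F(x)(t)$ lie in $V[a,b]$.

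The main technical obstacle, and the reason Section~\ref{sec:prod embed} is devoted to it, is the quantitative construction of this interpolation. One needs an explicit, uniformly bounded family of band-limited functions with spectrum in $[a,b]$ whose time-support is essentially concentrated in a single strip of length $n!$, whose parameter dimension approaches $2(b-a)\cdot n!$ as $n \to \infty$, and whose $L^\infty$-norm is sharply controlled by the $\ell^2$-norm of the coefficient vector. The strip-by-strip perturbations must moreover glue consistently under the return map $T_n$ to produce a globally equivariant $F$, and the final $\psi$ must remain inside the unit ball of $V[a,b]$. Balancing these bandwidth, localization, boundedness, and equivariance constraints simultaneously, while keeping the sup-distance to $\phi$ below $\epsilon$, is the heart of the proof.
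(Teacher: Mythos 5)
Your proposal is not a proof of the corollary but a sketch of the main embedding theorem (Theorem~\ref{thm0}, via Theorem~\ref{thm71} and Proposition~\ref{ppppp}). The paper's proof of the corollary is a short deduction: verify that $(X,\mathbb{R})$, being an extension of the aperiodic solenoid flow, is itself aperiodic, so $P_r(X,\mathbb{R})=\emptyset$ for all $r>0$ and the periodic-point hypotheses of Conjecture~\ref{conj:R} hold trivially; apply Theorem~\ref{thm0} with the band $[0,a]$ to embed $(X,\mathbb{R})$ into $(B_1(V[0,a]),\mathbb{R})$; and finally compose with the $\mathbb{R}$-equivariant embedding
$$B_1(V[0,a])\to B_1(V^{\mathbb{R}}[-a,a]),\qquad\varphi\mapsto\tfrac{1}{2}(\varphi+\overline{\varphi}).$$
You omit all three of these steps, and two of them expose genuine gaps. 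First, your entire construction targets $B_1(V[a,b])$, which consists of complex-valued functions, whereas Conjecture~\ref{conj:R} requires an embedding into the real-valued space $B_1(V^{\mathbb{R}}[-a,a])$. Your remark that ``the asymmetric interval must be handled directly'' because a frequency shift is not equivariant is a red herring here: the passage the paper needs is not a frequency modulation but the real-part map above, and without it your argument lands in the wrong space. Second, you never check the hypotheses of Conjecture~\ref{conj:R} concerning $\dim P_r(X,\mathbb{R})$; this is immediate from aperiodicity, but it is a hypothesis that must be verified, not a consequence of the embedding.

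As a secondary point, even taken as a sketch of Theorem~\ref{thm0}, your plan diverges from the paper in a way worth noting. The paper does not try to embed $X$ directly into a band-limited space; it first proves Theorem~\ref{thm71}, producing an embedding of $(X,\mathbb{R})$ into the product $(B_1(V[a,c_1])\times S,\mathbb{R}\times\mathbb{R})$, with the solenoid factor $S$ kept explicit, and only afterwards encodes $S$ into a disjoint frequency band $B_1(V[c_2,b])$ via Proposition~\ref{prop:sol}, merging bands at the end. Retaining the solenoid coordinate in the target is precisely what resolves the ``strip-by-strip gluing'' and equivariance-across-towers difficulties you flag at the end: the cross-section phase $\Phi(x)_N$ is available as a coordinate of $\Phi(x)$ and is used to anchor the interpolation lattice $\Lambda(x)$, so consistency under the return map is automatic. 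Attempting the all-in-one embedding without the product factor would require reconstructing this phase from the band-limited function alone, which your sketch does not address.
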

\begin{proof}
Suppose $\mathrm{mdim}(X,\mathbb{R})<a$ for some $a>0$. As $(X,\mathbb{R})$  is an extension of an aperiodic system, it is aperiodic and in particular  for every $r\in\mathbb R$, $\dim P_r(X,\mathbb{R})=0$. We have to show that  $(X,\mathbb R)$ may be embedded in the flow $(B_1(V^{\mathbb{R}}[-a,a]),\mathbb{R})$. Indeed by Theorem \ref{thm0} $(X,\mathbb{R})$ may be embedded in
$(B_1(V[0,a]),\mathbb{R})$. It is now enough to notice that one has the following embedding:
$$B_1(V[0,a])\to B_1(V^{\mathbb{R}}[-a,a]),\,\;\,
\varphi\mapsto\frac{1}{2}(\varphi+\overline{\varphi}).$$
\end{proof}
Since for any flow $(X,\mathbb{R})$,
the product flow $(X\times S,\mathbb R\times\mathbb R)$
is an extension of the flow $(S,\mathbb{R})$,
the following result is a direct corollary of Theorem \ref{thm0}.
\begin{Theorem}
For every flow $(X,\mathbb{R})$ with
$\mathrm{mdim}(X,\mathbb{R})<b-a$
(where $a<b$ are real numbers)
there is an extension $(Y,\mathbb{R})$
with $\mathrm{mdim}(X,\mathbb{R})=\mathrm{mdim}(Y,\mathbb{R})$
that can be embedded in $(B_1(V[a,b]),\mathbb{R})$.
\end{Theorem}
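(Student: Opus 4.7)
The plan is to obtain the theorem as an immediate consequence of Theorem \ref{thm0} by tensoring with the solenoid. Concretely, I would set
$$Y = X \times S, \qquad \varphi_r(x,s) = (r.x,\, r.s),$$
equipped with the diagonal $\mathbb{R}$-action. The first coordinate projection $\pi_1:Y\to X$ is continuous, surjective and $\mathbb{R}$-equivariant, so $(Y,\mathbb{R})$ is an extension of $(X,\mathbb{R})$. The second coordinate projection $\pi_2:Y\to S$ exhibits $(Y,\mathbb{R})$ as an extension of the solenoid flow $(S,\mathbb{R})$, which is exactly the hypothesis needed to invoke Theorem \ref{thm0}.

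The next step is the mean dimension identity $\mathrm{mdim}(Y,\mathbb{R})=\mathrm{mdim}(X,\mathbb{R})$. The inequality $\mathrm{mdim}(X,\mathbb{R})\le\mathrm{mdim}(Y,\mathbb{R})$ is immediate from the fact that $(X,\mathbb{R})$ is a factor of $(Y,\mathbb{R})$: any $(d,\epsilon)$-embedding obstruction for $X$ can be pulled back through $\pi_1$ to one for $Y$, so $\mathrm{Widim}_\epsilon(X,d_N)\le\mathrm{Widim}_\epsilon(Y,\tilde d_N)$ for a suitable product metric. For the reverse inequality I would pass to the time-$1$ map via Proposition \ref{rz}, writing
$$\mathrm{mdim}(Y,\mathbb{R}) = \mathrm{mdim}(Y,\varphi_1) = \mathrm{mdim}(X\times S,\varphi^X_1\times\varphi^S_1),$$
and then apply the standard product subadditivity for mean dimension of $\mathbb{Z}$-actions (which is proved exactly as item (5) of Proposition 2.1, using product metrics and the inequality $\dim(K\times L)\le\dim(K)+\dim(L)$), together with $\mathrm{mdim}(S,\mathbb{R})=0$. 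This last fact follows from Proposition 2.2 since the solenoid $S$ has finite (indeed equal to $1$) Lebesgue covering dimension.

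With $\mathrm{mdim}(Y,\mathbb{R})=\mathrm{mdim}(X,\mathbb{R})<b-a$ established and $(Y,\mathbb{R})$ being an extension of $(S,\mathbb{R})$, Theorem \ref{thm0} applies directly and gives an $\mathbb{R}$-equivariant embedding of $(Y,\mathbb{R})$ into $(B_1(V[a,b]),\mathbb{R})$, completing the proof. I do not anticipate any serious obstacle: the only nontrivial input beyond Theorem \ref{thm0} is the product subadditivity of mean dimension combined with $\mathrm{mdim}(S,\mathbb{R})=0$, both of which are routine.
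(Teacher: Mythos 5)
Your proposal is correct and coincides with the paper's argument: the paper obtains the theorem as a direct corollary of Theorem \ref{thm0} by taking $Y=X\times S$, using that the product flow is automatically an extension of the solenoid flow $(S,\mathbb{R})$. You merely spell out the mean-dimension identity $\mathrm{mdim}(X\times S,\mathbb{R})=\mathrm{mdim}(X,\mathbb{R})$ (via factor monotonicity, product subadditivity for the time-$1$ maps, and $\mathrm{mdim}(S,\mathbb{R})=0$ since $\dim S=1$), details the paper leaves implicit.
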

In our proof of Theorem \ref{thm0},
the key step is to embed $(X,\mathbb R)$
in a product flow (Theorem \ref{thm71}):\begin{Theorem}\label{thm71}
Suppose that $a<b$, $\mathrm{mdim}(X,\mathbb{R})<b-a$ and
$\Phi:(X,\mathbb{R})\to (S,\mathbb{R})$ is an extension.
Then for a dense $G_\delta$ subset of $f\in C_\mathbb{R}(X,B_1(V[a,b]))$ the map
$$(f,\Phi):X\to B_1(V[a,b])\times S,\,\;\,x\mapsto(f(x),\Phi(x))$$
is an embedding.\end{Theorem}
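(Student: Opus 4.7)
The plan is to prove the theorem by a Baire category argument. Work in the complete metric space $\mathcal{C} := C_\mathbb{R}(X, B_1(V[a,b]))$ of $\mathbb{R}$-equivariant continuous maps, and for each integer $n \ge 1$ set
$$\mathcal{U}_n := \{f \in \mathcal{C} : (f(x),\Phi(x)) = (f(y),\Phi(y)) \Longrightarrow d(x,y) < 1/n\}.$$
A standard compactness argument shows each $\mathcal{U}_n$ is open in $\mathcal{C}$, and $\bigcap_{n\ge 1} \mathcal{U}_n$ is exactly the set of $f$ for which $(f,\Phi)$ is injective, hence an embedding, since $X$ is compact Hausdorff. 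So by Baire's theorem it suffices to prove each $\mathcal{U}_n$ is dense. Fix $f_0 \in \mathcal{C}$, $\eta > 0$, and $\epsilon = 1/n$; the goal is to perturb $f_0$ to some $f \in \mathcal{U}_n$ with $\|f - f_0\|_\infty < \eta$.

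The perturbation exploits the solenoid factor to set up a tower structure. Composing $\Phi : X \to S$ with the projection $S \to S_N$ onto the circle of circumference $N!$ yields an equivariant map $\Phi_N : X \to \mathbb{R}/N!\mathbb{Z}$, so $X$ carries a natural time-$N!$ Rokhlin-type decomposition over the section $X_0 = \Phi_N^{-1}(0)$. Since $\mathrm{mdim}(X,\mathbb{R}) < b-a$, fix $\epsilon' \in (0,\epsilon)$ and $N$ large with
$$\mathrm{Widim}_{\epsilon'}(X, d_{N!}) < (b-a)\, N!,$$
and pick a $(d_{N!}, \epsilon')$-embedding $\pi : X \to P$ into a polyhedron $P$ of dimension $\dim P < (b-a)N!$. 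The plan is to find a continuous $h : X \to \mathbb{C}$ of small sup norm, whose orbit functions $t \mapsto h(\varphi_t x)$ all lie in $V[a,b]$, such that setting $g(x)(t) := h(\varphi_t x)$ and $f := (1-\eta)f_0 + g$ produces an equivariant $f \in \mathcal{C}$ for which coincidence of $(f,\Phi)(x) = (f,\Phi)(y)$ forces $\pi(x) = \pi(y)$, and hence $d_{N!}(x,y) < \epsilon'$.

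The hard part, which is exactly the content of the ``key proposition'' to be established in Section \ref{sec:prod embed}, is the equivariant band-limited interpolation: realizing $h$ so that (i) $\|f(x)\|_\infty \le 1$ for every $x$, (ii) $\mathrm{supp}(\mathcal{F}(f(x))) \subset [a,b]$, and (iii) the sampling $x \mapsto f(x)|_{[0,N!]}$, read off together with $\Phi(x)$, recovers $\pi(x)$. The intuition is the classical Nyquist principle: the space of band-limited functions of bandwidth $b-a$ restricted to an interval of length $N!$ has effective real dimension of order $(b-a)N!$, which strictly exceeds $\dim P$, so a generic immersion of $P$ into band-limited functions will separate $\pi$-values. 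The solenoid factor is crucial in that it provides a globally coherent origin along every orbit, allowing a perturbation designed on a single fundamental domain $[0,N!]$ to propagate equivariantly to all of $\mathbb{R}$ without interference between distinct tower levels; this is precisely where the extension hypothesis on $(X,\mathbb{R})$ enters. Combining the resulting density of each $\mathcal{U}_n$ with the Baire setup produces the desired dense $G_\delta$ subset of embeddings.
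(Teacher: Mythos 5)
Your Baire--category reduction matches the paper's: you take $\mathcal{U}_n$ to be the set of $f$ making $(f,\Phi)$ a $\frac1n$-embedding, observe each $\mathcal{U}_n$ is open by compactness, note that $\bigcap_n\mathcal{U}_n$ is exactly the set of $f$ for which $(f,\Phi)$ is injective, and reduce to a density statement. This is precisely the paper's reduction of Theorem~\ref{thm71} to Proposition~\ref{ppppp}.

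The gap is in the density step, which is where the theorem actually lives. You explicitly defer the ``hard part'' to a key proposition and give a heuristic, but the shape of your heuristic does not close the argument. You fix an abstract $(d_{N!},\epsilon')$-embedding $\pi:X\to P$ with $\dim P<(b-a)N!$ and ask for a continuous $h:X\to\mathbb{C}$ of small sup-norm whose orbit functions are in $V[a,b]$ and such that $(f(x)|_{[0,N!]},\Phi(x))$ ``recovers'' $\pi(x)$. But there is no reason such an $h$ can be taken small: $\pi$ has no a priori relation to $f_0$, and a generic band-limited immersion of $P$ that separates $\pi$-values will not be realizable as a sup-norm-$<\eta$ perturbation of a prescribed $f_0$. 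This is exactly the obstruction the paper overcomes, and you omit both ingredients that do the work. First, the paper does not start from an abstract $\pi$; it samples $f_0$ along the Nyquist lattice anchored by $\Phi(x)_N$ to produce $F:X\to([0,1]^2)^{\rho N!}$, and then applies Lemma~\ref{11111} (Lemma~2.1 of GT14) to perturb $F$ within $\delta'$ in sup-norm to a map $G$ that is a $(d_{N!},\epsilon)$-embedding. The closeness of $G$ to $F$ guaranteed by that lemma, not the mere existence of some $\epsilon$-embedding, is what makes the final correction small. Second, the correction $G-F$ must be spread out equivariantly into a band-limited function; this requires the rapidly decreasing interpolation function $\varphi\in V[a,b]$ with $\varphi(0)=1$ and $\varphi|_{L^*(\rho)}=0$ constructed in Proposition~\ref{prop:interpolation} (which in turn rests on the Hadamard-type product estimate of Lemma~\ref{lem:prod_function} and the Paley--Wiener theorem), and on choosing a rational $\rho<b-a$ so that the lattice $L(\rho)$ is $N!$-periodic. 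Your conditions (i)--(iii) cannot all be met for a fixed $f_0$ without these two steps, and as written the density of each $\mathcal{U}_n$ remains unproved.
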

\begin{remark}
It is possible to prove a similar theorem where $(S,\mathbb{R})$ is replaced by a solenoid defined by circles of circumference $r_n\rightarrow_{n\rightarrow \infty} \infty$  but we will not pursue this direction.
\end{remark}

The proof is given in the next section. We start by an auxiliary result:
\begin{Proposition}\label{prop:sol}
There is an embedding of $(S,\mathbb{R})$ in $(B_1(V[0,c]),\mathbb{R})$
for any $c>0$.
\end{Proposition}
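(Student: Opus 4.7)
Since $S$ is compact, it suffices to exhibit an $\mathbb{R}$-equivariant continuous injection $\iota : S \to B_1(V[0,c])$; compactness plus injectivity automatically upgrades $\iota$ to a homeomorphism onto its image. The plan is to write down an explicit formula of Fourier-series type using the frequencies $1/n!$, which are tailor-made for the solenoid because the exponential of frequency $1/n!$ is $n!$-periodic and hence compatible with the circle $S_n$ of circumference $n!$.

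Pick $N_0 \in \mathbb{N}$ with $1/N_0! \le c$ and set
\[
\iota\bigl((x_n)_n\bigr)(t) = \sum_{n=N_0}^\infty 2^{-n}\exp\!\left(\frac{2\pi i(t+x_n)}{n!}\right), \qquad t\in\mathbb{R}.
\]
The series converges uniformly on $\mathbb{R}\times S$, yielding a continuous function with sup norm at most $\sum_{n\ge N_0}2^{-n}\le 1$. Continuity of $\iota$ into $(B_1(V[0,c]),\boldsymbol{d})$ is immediate since $\boldsymbol{d}$ is dominated by uniform convergence on compact subsets of $\mathbb{R}$. $\mathbb{R}$-equivariance is automatic because the $n$-th summand is $n!$-periodic in $t$, so replacing $x_n$ by $x_n+r \bmod n!$ has the same effect as replacing $t$ by $t+r$.

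To verify that $\iota((x_n)_n)\in V[0,c]$, I would compute its Fourier transform as a tempered distribution: uniform convergence of the partial sums implies convergence as tempered distributions, and a direct calculation gives
\[
\mathcal{F}\bigl(\iota((x_n)_n)\bigr) = \sum_{n=N_0}^\infty 2^{-n}e^{2\pi i x_n/n!}\,\delta_{1/n!},
\]
whose support lies in $\{0\}\cup\{1/n! : n\ge N_0\}\subset[0,1/N_0!]\subset[0,c]$.

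Injectivity is the last step. Given $f=\iota((x_n)_n)$ and any $m\ge N_0$, the set $\{1/n! : n\ge N_0\}$ consists of isolated points accumulating only at $0$, so I can choose a Schwartz test function supported in a small neighborhood of $1/m!$ that excludes $0$ and every other $1/n!$ and equals $1$ at $1/m!$. Pairing $\mathcal{F}(f)$ with this test function reads off $2^{-m}e^{2\pi i x_m/m!}$ and hence determines $x_m \bmod m!$; the remaining coordinates $x_k$, $k<N_0$, are then forced by the solenoid compatibility $x_k=x_{N_0}\bmod k!$. The main technical point I anticipate is carrying out the Fourier-transform identification above carefully (and confirming the support is contained in $[0,c]$, including at the accumulation point $0\in[0,c]$); once that is in hand, the equivariance, continuity, and injectivity verifications are routine.
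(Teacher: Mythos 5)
Your map $\iota$ is exactly the paper's map $\phi$, and your verification of continuity, the sup-norm bound, $\mathbb{R}$-equivariance, and the Fourier-support condition all match the paper. The one place where you genuinely diverge is in proving injectivity. The paper proves a small lemma on almost-periodic sums: if $\sum|a_n|<\infty$ and the $\lambda_n$ are pairwise distinct reals with $|\lambda_n|\le M$, then $f(z)=\sum a_n e^{i\lambda_n z}$ is entire and $f\equiv 0$ forces all $a_n=0$; the coefficients are recovered by Bohr averaging, $a_m=\lim_{T\to\infty}\frac1T\int_0^T f(t)e^{-i\lambda_m t}\,dt$, using absolute summability to interchange sum and limit. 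You instead read off the coefficients on the Fourier side: identify $\mathcal{F}(\iota(x))=\sum_{n\ge N_0}2^{-n}e^{2\pi i x_n/n!}\delta_{1/n!}$ as a tempered distribution (justified by uniform convergence with a uniform bound, hence convergence in $\mathcal{S}'$, plus continuity of $\mathcal{F}$ on $\mathcal{S}'$), and then pair against a Schwartz bump localized at $1/m!$ and away from $0$ and the other frequencies --- possible since $\{1/n!:n\ge N_0\}$ has $0$ as its only accumulation point. This is a clean alternative that stays entirely within the distributional framework already used to define $V[a,b]$, whereas the paper detours through entire-function estimates and a Bohr-type mean. The only small points to be careful about, which you do flag, are that the support of $\mathcal{F}(\iota(x))$ contains the accumulation point $0$ (still inside $[0,c]$), and that recovering $x_m$ for $m\ge N_0$ together with the solenoid compatibility $x_k=x_{N_0}\bmod k!$ recovers the full point of $S$. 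Both arguments are correct; yours is perhaps the more natural one given how the target space is defined.
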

 \begin{proof}
Define a continuous and $\mathbb{R}$-equivariant map $$\phi:(S,\mathbb{R})\to(B_1(V[0,c]),\mathbb{R})$$ by:
$$S\ni x=(x_n)_n\mapsto f_x(t)=
\sum_{n\ge m(c)}\frac{1}{2^n}\cdot
e^{2\pi i(t+x_n)/n!}=\sum_{n\ge m(c)}(\frac{1}{2^n}\cdot e^{\frac{2\pi i}{n!}x_n})\cdot
e^{\frac{2\pi i}{n!}t}$$
where $m(c)\in\mathbb{N}$ it taken to be sufficiently large so that the (RHS) belongs to $B_1(V[0,c])$.

Assume $f_x(t)=f_y(t)$ for some $x=(x_n)_n,y=(y_n)_n\in S$. We claim $x=y$. This implies that the map is an embedding. Indeed it is enough to show that for all $n$, $\frac{1}{2^n}\cdot e^{\frac{2\pi i}{n!}x_n}=\frac{1}{2^n}\cdot e^{\frac{2\pi i}{n!}y_n}$. This is a consequence of the following more general lemma:

\begin{Lemma}
Let $a_n$ be an absolutely summable series ($\sum |a_n|<\infty$). Let $\lambda_n$ be a pairwise distinct sequence of real numbers bounded in absolute value by $M>0$ ($|\lambda_n|\leq M$). Then $f(z)=\sum a_n e^{i\lambda_n z}$, $z\in \mathbb{C}$, defines an entire function such that $f\equiv 0$ iff $a_n=0$ for all $n$.
\end{Lemma}
\begin{proof} (Compare with the proof of \cite[Theorem I.3.1]{Mandelbrojt72})
We claim $$\lim_{T\rightarrow \infty}\frac{1}{T} \int_0^T f(t)e^{-i\lambda_m t}dt=a_m$$ for all $m$. Thus $f\equiv 0$ implies $a_m=0$ for all $m$. Indeed $$\frac{1}{T} \int_0^T f(t)e^{-i\lambda_m t}dt=\frac{1}{T} \int_0^T \sum_{n\neq m} a_n e^{i(\lambda_n-\lambda_m) t}dt+\frac{1}{T} \int_0^T a_n dt.$$ For $n\neq m$ as $\lambda_n-\lambda_m\neq 0$, we have $$\lim_{T\rightarrow \infty}\frac{1}{T}\int_0^T e^{i(\lambda_n-\lambda_m) t}dt=0.$$ As absolute summability implies one may reorder the limiting operations one has $$\lim_{T\rightarrow \infty}\frac{1}{T} \int_0^T \sum_{n\neq m} a_n e^{i(\lambda_n-\lambda_m) t}dt= \sum_{n\neq m} \lim_{T\rightarrow \infty}\frac{1}{T} \int_0^T a_n e^{i(\lambda_n-\lambda_m) t}dt=0,$$ This  completes the proof.
\end{proof}
\end{proof}
Now we show Theorem \ref{thm0} assuming Theorem \ref{thm71}.
\begin{proof}[Proof of Theorem \ref{thm0}
assuming Theorem \ref{thm71}]
We take $a<c_1<c_2<b$ with $\mathrm{mdim}(X,\mathbb{R})<c_1-a$.
By Theorem \ref{thm71}, $(X,\mathbb{R})$ can be embedded in
$(B_1(V[a,c_1])\times S,\mathbb{R}\times\mathbb{R})$,
which, by Proposition \ref{prop:sol}, can be embedded in
$(B_1(V[a,c_1])\times B_1(V[c_2,b]),\mathbb{R}\times\mathbb{R})$,
and finally embedded in $(B_1(V[a,b]),\mathbb{R})$ by the following embedding:
$$B_1(V[a,c_1])\times B_1(V[c_2,b])\to B_1(V[a,b]),\,\;\,
(\varphi_1,\varphi_2)\mapsto\frac{1}{2}(\varphi_1+\varphi_2).$$
This ends the proof.\end{proof}\section{Embedding in a product}\label{sec:prod embed}
Let $C_\mathbb{R}(X,B_1(V[a,b]))$ be
the space of $\mathbb{R}$-equivariant continuous
maps $f:X\to B_1(V[a,b])$.
This space is nonempty because it contains the constant $0$.
The metric on $C_\mathbb{R}(X,B_1(V[a,b]))$ is chosen to be the uniform distance
$\sup_{x\in X}\boldsymbol{d}(f(x),g(x))$.
This space is completely metrizable and hence is a Baire space
(see \cite[Theorem 48.2]{M}).

We denote by $d$ the metric on $X$.
To prove Theorem \ref{thm71},
it suffices to show that the set
$$\bigcap_{n=1}^\infty\big\{f\in C_\mathbb{R}(X,B_1(V[a,b])):(f,\Phi)\text{ is a }
\frac1n\text{-embedding with respect to }d\big\}$$
is a dense $G_\delta$ subset of $C_\mathbb{R}(X,B_1(V[a,b]))$.
It is obviously a $G_\delta$ subset of $C_\mathbb{R}(X,B_1(V[a,b]))$. Therefore it remains to prove the following:
\begin{Proposition}\label{ppppp}
For any $\delta>0$ and $f\in C_\mathbb{R}(X,B_1(V[a,b]))$,
there is $g\in C_\mathbb{R}(X,B_1(V[a,b]))$ such that:

(1) for all $x\in X$ and $t\in\mathbb{R}$, $|f(x)(t)-g(x)(t)|<\delta$;

(2) $(g,\Phi):X\to B_1(V[a,b])\times S$ is a $\delta$-embedding with respect to $d$.
\end{Proposition}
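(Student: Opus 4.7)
The plan is to construct $g$ as a small $\mathbb{R}$-equivariant perturbation of $f$ that encodes a finite-dimensional ``signature'' of $x\in X$ into the band-limited function $g(x)$, using a cross-section provided by the solenoid extension $\Phi$. First, I would invoke the mean dimension hypothesis: since $\mathrm{mdim}(X,\mathbb{R})<b-a$, I pick $\epsilon_1\in(0,\delta/10)$ and $N\in\mathbb{N}$ large enough that $\mathrm{Widim}_{\epsilon_1}(X,d_N)<(b-a)N$, obtaining a compact polyhedron $P$ with $\dim P<(b-a)N$ together with a $(d_N,\epsilon_1)$-embedding $F\colon X\to P$.

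Second, I would exploit the solenoid structure: fix $n$ with $n!\geq N$, and let $\pi_n\colon S\to S_n=\mathbb{R}/n!\mathbb{Z}$ be the canonical projection. For any basepoint $s_0\in S_n$, the set $\Sigma=\Phi^{-1}(\pi_n^{-1}(s_0))\subset X$ is a closed cross-section for the flow on $X$ with return time exactly $n!$, so every $y\in X$ is uniquely of the form $y=\varphi_r(x)$ with $x\in\Sigma$ and $r\in[0,n!)$, and moreover the $\Phi$-image of $y$ already determines $r$.

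Third, using that $V[a,b]$ has roughly $(b-a)\cdot n!$ real degrees of freedom on a time-window of length $n!$, I would construct a continuous map $\Psi\colon P\to V[a,b]$ with image in the $L^{\infty}$-unit ball, injective on the window $[0,n!]$, and with rapid decay outside. Concretely, one composes an affine embedding $P\hookrightarrow\mathbb{R}^{\dim P}$ with a linear map $\mathbb{R}^{\dim P}\to V[a,b]$ sending standard basis vectors to well-spaced translates of a fixed Schwartz function whose Fourier transform is supported in $[a,b]$ (feasible since $\dim P<(b-a)n!$ lies below the Nyquist density). Then I would set
\[
g(x)=f(x)+\eta\,\Psi(F(x))\quad\text{for }x\in\Sigma,
\]
and extend by $g(\varphi_r(x))(t)=g(x)(t+r)$, taking $\eta>0$ small enough that $\|g(x)\|_{\infty}\leq 1$ and $\|f-g\|_{\infty}<\delta$, yielding (1). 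For (2), if $\Phi(x)\neq\Phi(y)$ then $\Phi$ alone separates; if $\Phi(x)=\Phi(y)$ then $x,y$ have the same phase $r$ relative to $\Sigma$, so the problem reduces to distinguishing the $\Sigma$-representatives $x',y'$; these are distinguished by $F$, hence by $\Psi$ on the $[0,n!]$ window, hence by $g$, as soon as $d_N(x',y')\geq\epsilon_1$---a condition guaranteed whenever $d(x,y)\geq\delta$ by uniform continuity of the flow.

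The main obstacle is the construction of $\Psi$: by the uncertainty principle, functions in $V[a,b]$ cannot be compactly supported, so $\Psi(F(x))$ inevitably leaks into neighboring cycles, potentially corrupting $g(x)$ at times $t\in[n!,2n!]$ through the term $\Psi(F(\varphi_{n!}(x)))(t-n!)$. This is overcome by choosing the generator Schwartz function with sufficiently rapid decay and $n$ large enough that the accumulated bleed-through is dominated by $\eta$. Continuity of $g$ across the cross-section seam, together with the compatibility of the polyhedral approximation $P$ with the chosen sampling density, are standard Whittaker--Shannon interpolation issues that should be manageable without further conceptual input.
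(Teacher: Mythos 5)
Your high-level plan matches the paper's strategy: use the solenoid factor $\Phi$ to find a cross-section with return time $N!$, use the mean dimension bound to get a finite-dimensional $\epsilon$-embedding, and graft that finite-dimensional data into the band-limited function $g(x)$ via a rapidly decaying pulse. However, there are two genuine gaps in the way you close the argument, and they are precisely the places where the paper's construction has to be careful.

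The central problem is that defining $g(x)=f(x)+\eta\,\Psi(F(x))$ on $\Sigma$ and extending by equivariance does \emph{not} make $g$ a $\delta$-embedding just because $\Psi\circ F$ is injective enough. If $(g(x),\Phi(x))=(g(y),\Phi(y))$ with the same phase, you only learn
$\eta\bigl(\Psi(F(x'))-\Psi(F(y'))\bigr)=f(y')-f(x')$
for the $\Sigma$-representatives, and nothing prevents an arbitrary $f$ from exactly masking the perturbation. Your proposal has no mechanism to recover $F(x')$ from $g(x')$ alone. The paper resolves this by (i) choosing the perturbation to be the interpolated difference $G^{\mathbb C}-F^{\mathbb C}$ (where $G$ is the Gutman--Tsukamoto $(d_{N!},\epsilon)$-embedding approximating $F$ and $F$ samples $f$ on the lattice $L(\rho)$), and (ii) using an interpolation function $\varphi$ that satisfies $\varphi(0)=1$ and $\varphi(\lambda)=0$ for all $\lambda\in L^*(\rho)$ (Proposition~\ref{prop:interpolation}). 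Because of these two choices, evaluating $g(x)$ at the lattice point $-\Phi(x)_N+k/\rho$ causes all but one term of the sum over shifted $\varphi$'s to vanish, and the surviving term exactly cancels $f(x)$ at that point, leaving $g(x)(-\Phi(x)_N+k/\rho)=G^{\mathbb{C}}(T^{-\Phi(x)_N}x)(k)$ \emph{on the nose}. This is what makes $g$ directly encode the injective data $G^{\mathbb C}$; without the lattice-vanishing property of $\varphi$ and without choosing the perturbation to be a difference whose $F$-part matches samples of $f$, the injectivity step fails.

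The second gap is the one you flagged as a ``manageable Whittaker--Shannon issue.'' Extending $g$ from $\Sigma$ to $X$ by the rule $g(\varphi_r(x))(t)=g(x)(t+r)$ requires, for consistency, that $g(\varphi_{N!}(x))(\cdot)=g(x)(\cdot+N!)$, i.e.\ $\Psi(F(\varphi_{N!}(x)))(t)=\Psi(F(x))(t+N!)$, which is false for a generic $\Psi$ supported essentially on a window. The paper handles this by not defining $g$ on a cross-section at all: instead $h(x)$ is written as a doubly infinite sum over $n\in\mathbb{Z}$ of shifted pulses anchored at $nN!-\Phi(x)_N$, and the verification that this is $\mathbb{R}$-equivariant is an explicit (and somewhat delicate) computation split into three cases $0\le r<N!-\Phi(x)_N$, $N!-\Phi(x)_N\le r<N!$ and $r=sN!$. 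The absolute summability needed for this series and for the $\delta$-smallness of $h$ comes precisely from the rapid decay of $\varphi$ and the quantitative estimate $\sum_{\lambda\in L(\rho)}\frac{K}{1+|t-\lambda|^2}<\delta/\delta'$. Your ``bleed-through is dominated by $\eta$'' remark is pointing at this, but the naive construction you describe doesn't produce an equivariant map.

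In short: the missing ingredient is the band-limited interpolation function $\varphi$ of Proposition~\ref{prop:interpolation} (built via the Phragm\'en--Lindel\"of estimate of Lemma~\ref{lem:prod_function}), together with the choice of perturbation as the $\varphi$-interpolated difference $G^{\mathbb C}-F^{\mathbb C}$ summed equivariantly over $\mathbb{Z}$. These two ideas jointly fix both the injectivity argument and the equivariance of the extension, neither of which goes through in your formulation.
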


\medskip

To show Proposition \ref{ppppp},
we prove several auxiliary results.
We start by quoting
\cite[Lemma 2.1]{GT14}:
\begin{Lemma}\label{11111}
Let $(X,d')$ be a compact metric space, and let $F:X\to[-1,1]^M$
be a continuous map.
Suppose that positive numbers $\delta'$ and $\epsilon$ satisfy
the following condition:
\begin{equation}\label{eq:lemma42}
d'(x,y)<\epsilon\Longrightarrow
||F(x)-F(y)||_{\infty}<\delta',
\end{equation}
then if $\mathrm{Widim}_\epsilon(X,d')<M/2$
then there is an $\epsilon$-embedding
$G:X\to[-1,1]^M$ satisfying:
$$\sup_{x\in X}||F(x)-G(x)||_{\infty}<\delta'.$$
\end{Lemma}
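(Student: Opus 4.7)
The strategy is to apply Lemma \ref{11111} to a suitable finite-dimensional proxy for $f$, using the solenoid factor $\Phi$ to arrange an $\mathbb{R}$-equivariant reconstruction. Since $\mathrm{mdim}(X,\mathbb{R})<b-a$, I would first choose a large integer $N$ and a small $\epsilon_0>0$ so that
\[
\mathrm{Widim}_{\epsilon_0}(X,d_{N!}) \;<\; (b-a)\,N!.
\]
Using uniform continuity of $f$ and of the flow on the compact space $X$, I simultaneously arrange that whenever $x,y\in X$ satisfy $\pi_N\circ\Phi(x)=\pi_N\circ\Phi(y)$ (same $S_N$-coordinate) and $d_{N!}(x,y)<\epsilon_0$, it follows that $d(x,y)<\delta$.

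Next, I build a continuous finite-dimensional proxy for $f$. Restricted to an interval of length $N!$, a function $\varphi\in B_1(V[a,b])$ is well-approximated by a trigonometric polynomial with frequencies in $\{a+k/N!:0\le k\le \lceil(b-a)N!\rceil\}$, producing $\lceil(b-a)N!\rceil+1$ complex Fourier coefficients. This defines a continuous map $\Pi:B_1(V[a,b])\to\mathbb{C}^{\lceil(b-a)N!\rceil+1}$; setting $M=2\lceil(b-a)N!\rceil+2$ and splitting real and imaginary parts, the composition $F=\Pi\circ f:X\to[-1,1]^M$ satisfies $M/2>(b-a)N!>\mathrm{Widim}_{\epsilon_0}(X,d_{N!})$. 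Lemma \ref{11111}, applied to $(X,d_{N!})$ and $F$, then yields an $\epsilon_0$-embedding $G:X\to[-1,1]^M$ arbitrarily close to $F$ in $\|\cdot\|_\infty$. I would reconstruct $g\in C_\mathbb{R}(X,B_1(V[a,b]))$ by assembling band-limited functions whose truncated Fourier data agrees with $G$, using $\Phi$ to pick a canonical global cross-section of the flow, performing the reconstruction there, and propagating along orbits so that $\mathbb{R}$-equivariance is preserved.

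To verify that $(g,\Phi)$ is a $\delta$-embedding, suppose $(g,\Phi)(x)=(g,\Phi)(y)$. Then $\Phi(x)=\Phi(y)$ forces $\pi_N\circ\Phi(x)=\pi_N\circ\Phi(y)$, while $g(x)=g(y)$, together with the closeness of $G$ to $F$ and the $\epsilon_0$-embedding property of $G$, forces $d_{N!}(x,y)<\epsilon_0$; the first step then gives $d(x,y)<\delta$. The uniform closeness $\sup_{x,t}|f(x)(t)-g(x)(t)|<\delta$ follows from the smallness of $\|F-G\|_\infty$ together with the boundedness of the Fourier reconstruction (a finite sum of bounded trigonometric modes), provided the initial choice of $\epsilon_0$ is coupled to the truncation error $\|\varphi-\Pi^{-1}\Pi\varphi\|_\infty$ inherent to the proxy.

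The main obstacle I anticipate is the $\mathbb{R}$-equivariant reconstruction step. Lemma \ref{11111} only produces an unstructured perturbation $G$, and assembling from it a genuine $\mathbb{R}$-equivariant band-limited map $g$ requires careful control of how the perturbed finite-dimensional coefficients glue across orbit segments. This is precisely where the solenoid extension $\Phi$ enters: it supplies a canonical periodic decomposition of orbits (through the projections $\pi_N:S\to S_N$) against which the reconstruction can be coherently performed, and it manages the boundary/truncation errors that arise when forcing genuine band-limited functions in $V[a,b]$ to match prescribed data on long intervals.
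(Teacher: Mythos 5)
Your proposal does not prove Lemma \ref{11111}; it \emph{uses} it. The opening sentence says you will ``apply Lemma \ref{11111} to a suitable finite-dimensional proxy for $f$,'' and the remainder of the sketch --- sample $f$ on a finite grid, perturb via the lemma to an $\epsilon$-embedding $G$, reconstruct an $\mathbb{R}$-equivariant band-limited map $g$, and verify that $(g,\Phi)$ is a $\delta$-embedding using the solenoid coordinate --- is a loose outline of the proof of Proposition \ref{ppppp}, not of the lemma itself. Taken as a proof of Lemma \ref{11111} the argument is circular, and none of the flow, Fourier, or solenoid machinery in your sketch is even applicable to the lemma, whose hypotheses involve only a compact metric space, a continuous map into a cube, and $\mathrm{Widim}_\epsilon$.

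For the record, the paper offers no proof of Lemma \ref{11111} either: the text immediately preceding it reads ``We start by quoting \cite[Lemma 2.1]{GT14},'' so the lemma is imported verbatim from Gutman--Tsukamoto. If you want to supply a self-contained proof, the standard Lindenstrauss--Weiss argument is purely finite-dimensional topology, with ingredients absent from your proposal. From $\mathrm{Widim}_\epsilon(X,d')<M/2$ one extracts a finite open cover $\mathcal{U}$ of $X$ of $d'$-mesh $<\epsilon$ whose nerve $P$ is a simplicial complex of dimension $<M/2$; a partition of unity subordinate to $\mathcal{U}$ gives a $(d',\epsilon)$-embedding $\pi\colon X\to P$. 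Define $\phi\colon P\to[-1,1]^M$ on vertices by $\phi(U)=F(x_U)$ for a chosen $x_U\in U$, extended affinely on each simplex; condition \eqref{eq:lemma42} guarantees $\sup_x\|\phi(\pi(x))-F(x)\|_\infty<\delta'$ once the cover is fine enough. Finally, because $2\dim P<M$, a generic small perturbation of the vertex images puts the affine pieces of $\phi$ in general position so that $\phi$ is injective on $P$, whence $G=\phi\circ\pi$ is the required $\epsilon$-embedding. Open covers, nerves, and general position in $\mathbb{R}^M$ are the actual content of this lemma; your reconstruction via $\Phi$ and interpolation belongs to a later stage of the paper's argument and presupposes the lemma rather than proving it.
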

We say that a holomorphic  function $g$ in $S\subset\mathbb{C}$ is of \textbf{exponential type} if for all $z\in S$, $|g(z)|\leq C e^{T|z|}$ for some $C,T>0$. The following classical theorem is proven in \cite[Section 3.1.7]{DM}.
\begin{Theorem}[Phragm\'{e}n--Lindel\"{o}f principle]\label{thm:pl}
Let $g$ be a function of exponential type that is holomorphic in the sector
$$S=\left\{z\in \mathbb{C}\,{\big |}\,\alpha <\arg z<\beta \right\}$$ of angle $\beta -\alpha <\pi$, and continuous on its boundary. If $|g(z)|\leq 1$ for $z\in \partial S$ then $|g(z)|\leq 1$ for $z\in S$.
\end{Theorem}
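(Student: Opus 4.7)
The plan is to prove the theorem by a classical auxiliary-function argument: I would multiply $g$ by a carefully chosen damping factor so that the product decays at infinity within the sector, then invoke the ordinary maximum modulus principle on large bounded subdomains and pass to the limit in two stages.

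First I would reduce to a symmetric sector by rotation. Replacing $g(z)$ by $g(e^{i(\alpha+\beta)/2}z)$, I may assume $\alpha=-\gamma/2$ and $\beta=\gamma/2$, where $\gamma:=\beta-\alpha\in(0,\pi)$. Because $\gamma<\pi$, I can fix an exponent $q$ with $1<q<\pi/\gamma$. For $z=re^{i\theta}\in\overline{S}$ define $z^q:=r^q e^{iq\theta}$ via the principal branch. Since $|q\theta|\le q\gamma/2<\pi/2$, one has $\cos(q\theta)\ge\cos(q\gamma/2)=:\delta_0>0$ throughout $\overline{S}$, and in particular $\operatorname{Re}(z^q)=r^q\cos(q\theta)\ge 0$.

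Next, for each $\epsilon>0$ I would introduce the damped auxiliary function
$$g_\epsilon(z)\;:=\;g(z)\,e^{-\epsilon z^q},\qquad z\in\overline{S}.$$
This function is holomorphic in $S$ and continuous on $\overline{S}$. On the boundary $\partial S$ one has $|g|\le 1$ by hypothesis and $|e^{-\epsilon z^q}|=e^{-\epsilon r^q\cos(q\theta)}\le 1$, so $|g_\epsilon|\le 1$ on $\partial S$. For $z\in S$ with $|z|=r$, the exponential-type bound $|g(z)|\le Ce^{T|z|}$ yields
$$|g_\epsilon(z)|\;\le\;C\exp\bigl(Tr-\epsilon\delta_0 r^q\bigr),$$
and since $q>1$ the right-hand side tends to $0$ as $r\to\infty$, uniformly in $\theta\in[-\gamma/2,\gamma/2]$. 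Hence there exists $R_0=R_0(\epsilon)$ such that $|g_\epsilon(z)|\le 1$ on the arc $\overline{S}\cap\{|z|=R\}$ for every $R\ge R_0$.

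Applying the standard maximum modulus principle to $g_\epsilon$ on the bounded domain $S\cap\{|z|<R\}$ then gives $|g_\epsilon|\le 1$ throughout that region, because its topological boundary (two radial segments plus a circular arc) is fully covered by the previous two estimates. Sending $R\to\infty$ yields $|g_\epsilon|\le 1$ on all of $S$, and then letting $\epsilon\to 0^+$ gives $|g(z)|\le 1$ for every $z\in S$, as required. The only subtle step is the construction of the damping factor: the hypothesis $\beta-\alpha<\pi$ is essential precisely because it leaves room to choose $q\in(1,\pi/\gamma)$, so that $e^{-\epsilon z^q}$ is simultaneously non-expanding on $\overline{S}$ (which forces $q\gamma/2<\pi/2$) and decays strictly faster at infinity than $g$ can grow (which forces $q>1$). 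This balance is the main obstacle, and once such a $q$ is fixed the rest of the argument is routine.
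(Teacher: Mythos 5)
Your proof is correct: the damping factor $e^{-\epsilon z^q}$ with $1 < q < \pi/\gamma$ (which exists precisely because $\gamma = \beta - \alpha < \pi$), the verification that $\cos(q\theta) \geq \cos(q\gamma/2) > 0$ on $\overline{S}$, the boundedness of $|g_\epsilon|$ on the boundary of the truncated sector, and the two-stage limit $R\to\infty$ followed by $\epsilon\to 0^+$ are all standard and correctly executed. The paper does not supply its own proof of this theorem; it only cites Dym and McKean, Section 3.1.7. Your argument is the classical auxiliary-function proof, which is essentially what that reference contains, so you have simply written out in full the proof the paper delegates to the literature.
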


According to the classical Paley-Wiener theorem (\cite[Theorem 19.3]{Rud87}), if $f\in L^2(\mathbb{R})$ extends to an entire function $F$ such that there exist $A,C>0$ such that for all $z=x+iy\in\mathbb{C}$, 
$ |f(x+yi)| \leq C e^{2\pi A|y|}$, then $\mathcal{F}(f)\in L^2(\mathbb{R})$ is supported in $[-A,A]$. We will need a generalized version: 
\begin{Theorem}\label{thm:gpw}
Let $f\in L^{\infty}(\mathbb{R})$ be a function which extends to an entire function $F:\mathbb{C}\rightarrow\mathbb{C}$ ($F_{|\mathbb{R}}=f$) such that there exist $A,C>0$ and $M\in\mathbb{N}$ such that for all $z=x+iy\in\mathbb{C}$ 
$$|F(z)|\leq C(1+|z|)^{M}\cdot e^{2\pi A|y|}.$$
Then $f\in V[-A,A]$. 
\end{Theorem}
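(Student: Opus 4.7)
The plan is to show that $\mathcal{F}(f)$, viewed as a tempered distribution, annihilates every test function whose support is disjoint from $[-A,A]$. Take $\phi \in C_c^{\infty}(\mathbb{R})$ with $\supp\phi \cap [-A,A] = \emptyset$; by linearity I may reduce to the case $\supp\phi \subset [A_0,B]$ for some $A_0 > A$, the case $\supp\phi \subset [-B,-A_0]$ being symmetric (handled by shifting contours into the upper half plane rather than the lower). The goal becomes
$$\langle\mathcal{F}(f),\phi\rangle \;=\; \langle f,\mathcal{F}(\phi)\rangle \;=\; \int_{\mathbb{R}} f(t)\,\psi(t)\,dt \;=\; 0,$$
where $\psi := \mathcal{F}(\phi)$.

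Repeated integration by parts in $\psi(z) = \int_{A_0}^{B} \phi(s) e^{-2\pi i z s}\,ds$ yields, for each $N \in \mathbb{N}$, a constant $C_N$ such that
$$|\psi(x+iy)| \;\le\; C_N\,(1+|x+iy|)^{-N}\, e^{-2\pi A_0|y|} \qquad (y \le 0).$$
Because $A_0 > A$, this exponential decay in $|y|$ strictly beats the growth rate $e^{2\pi A|y|}$ permitted for $F$; this exponential gap is what will drive the whole argument. I then apply Cauchy's theorem to the entire function $H(z) := F(z)\psi(z)$ on the rectangle with vertices $\pm T$ and $\pm T - iR$. Combining the Paley--Wiener decay of $\psi$ with $|F(z)|\le C(1+|z|)^M e^{2\pi A|y|}$ and choosing $N > M+1$, the two vertical sides contribute $O\bigl((1+T)^{M-N}\bigr)$ (up to a factor depending on $R$) after the $y$-integration, hence vanish as $T \to \infty$ for each fixed $R$. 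This yields
$$\int_{\mathbb{R}} F(t)\psi(t)\,dt \;=\; \int_{\mathbb{R}} F(t-iR)\,\psi(t-iR)\,dt \qquad \text{for every } R > 0.$$

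On the shifted contour the pointwise bound on the integrand is $C'(1+\sqrt{t^2+R^2})^{M-N} e^{2\pi(A-A_0)R}$; taking $N \ge M+2$ makes the $t$-integral $O(1/R)$, and the sign condition $A-A_0 < 0$ then forces the right-hand side to $0$ as $R \to \infty$. Since the left-hand side equals $\int f(t)\psi(t)\,dt$, we obtain the desired vanishing $\langle\mathcal{F}(f),\phi\rangle = 0$, so $\supp\mathcal{F}(f)\subset[-A,A]$ and $f\in V[-A,A]$.

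The main obstacle is simultaneously tracking the various exponential-versus-polynomial growth and decay rates in the two complex directions and verifying that the vertical-side estimates are uniform enough to survive both successive limits $T\to\infty$ and $R\to\infty$. Once the Paley--Wiener decay of $\psi$ in the lower half plane is established, the rest is a careful but standard application of Cauchy's theorem; the polynomial factor $(1+|z|)^M$ is absorbed entirely by the arbitrary-order horizontal decay of $\psi$ coming from $\phi \in C_c^\infty$, so I do not expect to need the Phragm\'en--Lindel\"of principle here.
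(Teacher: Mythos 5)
Your argument is correct. The paper itself offers no proof here---it simply cites \cite[Theorem 7.2.3]{Strichartz}---so there is nothing in the text to compare against line by line. What you have written is the standard self-contained argument for this direction of the Paley--Wiener--Schwartz theorem: reduce to pairing the tempered distribution $\mathcal{F}(f)$ against a test function $\phi$ supported in one of the two half-lines outside $[-A,A]$, observe that $\psi=\mathcal{F}(\phi)$ decays like $(1+|z|)^{-N}e^{-2\pi A_0|y|}$ in the relevant half-plane for every $N$, apply Cauchy's theorem to $F\psi$ on tall rectangles, and let $R\to\infty$ using the strict exponential gap $A_0>A$. All the estimates you state check out: the vertical sides of the rectangle are $O\!\left(R(1+T)^{M-N}\right)$ and vanish as $T\to\infty$ once $N>M$; the shifted horizontal integral is $O\!\left(R^{1+M-N}e^{-2\pi(A_0-A)R}\right)$ and vanishes as $R\to\infty$ once $N\ge M+2$; and the reduction to $\supp\phi\subset[A_0,B]$ or $\subset[-B,-A_0]$ is legitimate by a partition of unity since the complement of $[-A,A]$ has two components. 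One minor point worth making explicit is that $f\in V[-A,A]$ also requires $f$ to be a bounded continuous function, which holds here because $f$ is given to be in $L^\infty$ and is the restriction to $\mathbb{R}$ of an entire function. You are also right that no Phragm\'en--Lindel\"of argument is needed: the superpolynomial horizontal decay of $\psi$ absorbs the $(1+|z|)^M$ factor outright. In short, this is a correct proof, fully in the spirit of the cited source, and more informative than the paper's bare citation.
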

\begin{proof}
See\footnote{While reading the proof in the reference one should note that in \cite{Strichartz} the Fourier transform is defined as $\mathcal{F}(f)(\xi)=\int_{-\infty}^\infty e^{it\xi}f(t)dt$.} \cite[Theorem 7.2.3]{Strichartz}.
\end{proof}
Let $\rho>0$ and $N\in\mathbb N$ so that
$\rho N!\in\mathbb N$. Define:
$$L(\rho)=\{\frac k\rho\}_{k\in\mathbb{Z}},\,\, L^*(\rho)=L(\rho)\setminus\{0\}.$$

 In the next lemma we write $x\lesssim y$ for two real numbers $x$ and $y$ if there exists a constant $C>0$ which depends only on $\rho$ and $N$ such that $x\le Cy$.
\begin{Lemma}\label{lem:prod_function}
Let $$f(z)=\lim_{A\to\infty}\prod_{\lambda\in L(\rho),0<|\lambda|<A}\left(1-\frac{z}{\lambda}\right).$$ Then $f$
defines a holomorphic function in $\mathbb{C}$ satisfying
$$f(0)=1,\quad\quad f(\lambda)=0,\quad\forall\;\lambda\in L^*(\rho).$$
Moreover, for all $z\in\mathbb{C}$ we have
$$|f(z)|\lesssim(1+|z|)^{5\rho N!}\cdot e^{\pi\rho|y|},$$
where $y$ is the imaginary part of $z$.
\end{Lemma}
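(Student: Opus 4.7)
The plan is to compute the product explicitly by exploiting the symmetry $\lambda \mapsto -\lambda$ of $L(\rho) = \tfrac{1}{\rho}\mathbb{Z}$. For any $A > 0$, pairing each $\lambda = k/\rho$ in the range $0 < |\lambda| < A$ with $-k/\rho$ yields
$$\prod_{\substack{\lambda \in L(\rho) \\ 0 < |\lambda| < A}}\left(1 - \frac{z}{\lambda}\right) \;=\; \prod_{0 < k < A\rho}\left(1 - \frac{\rho z}{k}\right)\left(1 + \frac{\rho z}{k}\right) \;=\; \prod_{0 < k < A\rho}\left(1 - \frac{\rho^{2}z^{2}}{k^{2}}\right).$$
Since $\sum_{k \geq 1} \rho^{2}|z|^{2}/k^{2}$ converges locally uniformly on $\mathbb{C}$, Weierstrass' criterion for infinite products gives uniform convergence of the partial products on compact subsets of $\mathbb{C}$ as $A \to \infty$ to an entire function. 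By Euler's classical factorization of sine, this limit equals
$$f(z) \;=\; \prod_{k=1}^{\infty}\left(1 - \frac{\rho^{2}z^{2}}{k^{2}}\right) \;=\; \frac{\sin(\pi\rho z)}{\pi\rho z}.$$

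The first two assertions follow immediately from this explicit form: $f(0) = \lim_{z \to 0}\sin(\pi\rho z)/(\pi\rho z) = 1$, and $f(k/\rho) = \sin(\pi k)/(\pi k) = 0$ for every $k \in \mathbb{Z}\setminus\{0\}$, so $f$ vanishes on all of $L^{*}(\rho)$.

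For the growth bound, write $z = x + iy$ and apply the elementary estimate $|\sin w| \leq e^{|\operatorname{Im} w|}$ (immediate from $\sin w = (e^{iw} - e^{-iw})/(2i)$) with $w = \pi\rho z$, giving $|\sin(\pi\rho z)| \leq e^{\pi\rho|y|}$. Hence
$$|f(z)| \;=\; \frac{|\sin(\pi\rho z)|}{\pi\rho|z|} \;\leq\; \frac{e^{\pi\rho|y|}}{\pi\rho|z|}\qquad(z \neq 0),$$
while $f$ is continuous at $0$ with value $1$, so it is uniformly bounded on $\{|z| \leq 1\}$. Combining the two regimes produces a constant $C = C(\rho)$ with $|f(z)| \leq C\,e^{\pi\rho|y|}$ for all $z \in \mathbb{C}$. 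Since trivially $C \leq C\,(1+|z|)^{5\rho N!}$, this is (considerably) stronger than the target bound $|f(z)| \lesssim (1+|z|)^{5\rho N!}\,e^{\pi\rho|y|}$.

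The only step requiring care is the identification of the symmetric infinite product with $\sin(\pi\rho z)/(\pi\rho z)$; I would handle this by fixing a compact disk $\{|z| \leq R\}$, noting that $|\log(1 - \rho^{2}z^{2}/k^{2})| \leq 2\rho^{2}R^{2}/k^{2}$ for $k > 2\rho R$, and invoking Weierstrass' theorem on absolutely uniformly convergent products. The polynomial factor $(1+|z|)^{5\rho N!}$ in the lemma's bound is wasteful for this particular $f$, but it is presumably stated in exactly this form so as to match the hypothesis of Theorem \ref{thm:gpw} (with $A = \rho/2$ and $M = 5\rho N!$) in the downstream application to deducing band-limitedness of $f|_{\mathbb{R}}$.
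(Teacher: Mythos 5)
Your proof is correct, and it is genuinely different from (and substantially simpler than) the paper's argument. The key observation that the paper does not exploit is that because $L(\rho)=\tfrac1\rho\mathbb Z$ is a uniform arithmetic progression, the canonical product over $L^{*}(\rho)$ is \emph{exactly} $\sin(\pi\rho z)/(\pi\rho z)$; from there the growth bound drops out of $|\sin w|\le e^{|\operatorname{Im} w|}$ in two lines, and your bound $|f(z)|\le C_{\rho}\,e^{\pi\rho|y|}$ is in fact sharper than the lemma's $(1+|z|)^{5\rho N!}$-weighted claim (and certainly still feeds Theorem~\ref{thm:gpw}, with $M=0$). The paper instead treats $f$ as an abstract canonical product: it bounds $|f(x)|$ on the real axis by polynomial growth through a block-by-block comparison over the blocks $L_n=L(\rho)\cap[nN!,(n+1)N!)$ (using Euler's sine product only locally, to estimate one sub-product), bounds $|f(iy)|$ by $e^{\pi\rho|y|}$ via a Riemann--Stieltjes integral against the counting function $n(r)$, checks $f$ has exponential type, and then interpolates into the quadrants by Phragm\'en--Lindel\"of. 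That machinery is more robust --- it applies essentially verbatim to a non-arithmetic zero set of density $\rho$ (as in the Beurling-type interpolation schemes the authors cite, and as they hint at for more general solenoids in a remark) --- whereas your argument leans on the exact lattice structure. For the lemma as stated, though, your route is correct, cleaner, and yields a stronger conclusion; the only thing to be aware of is that the closed form would have to be abandoned if one later wanted to perturb $L(\rho)$ away from an arithmetic progression.
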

\begin{proof}
We first show the convergence of $f(z)$. Notice
$$f(z)=\lim_{A\to\infty}\prod_{\lambda\in L(\rho),0<\lambda<A}\left(1-\frac{z^2}{\lambda^2}\right)$$
As $\sum_{\lambda\in L(\rho),0<\lambda}\frac{1}{\lambda^2}$ converges, the limit above converges locally uniformly (see \cite[\S 29, Theorems 6 \& 7]{kno51}. Thus, $f(z)$ is a holomorphic function which satisfies
$$f(0)=1,\quad\quad f(\lambda)=0,\quad\forall\;\lambda\in L^*(\rho).$$
Next we shall estimate the growth of $f$ on the real line. Suppose $x>0$ and let $k$ be the integer with $kN!\le x<(k+1)N!$. We may assume $k>0$, as the case $k=0$ is easier and can be dealt with in a similar way. For $n\in\mathbb{Z}$, set
$$L_n=L(\rho)\cap[nN!,(n+1)N!).$$
For $\lambda\in L_n$ with $n\le-2$ or $n\ge k+1$ we have
$$|1-x/\lambda|\le1-x/(n+1)N!$$
and hence
$$\prod_{\lambda\in L_n}\left|1-\frac{x}{\lambda}\right|\le\left|1-\frac{x}{(n+1)N!}\right|^{\rho N!}.$$
For $\lambda\in L_n$ with $1\le n<k$ we have
$$|1-x/\lambda|\le x/(nN!)-1$$
and hence
$$\prod_{\lambda\in L_n}\left|1-\frac{x}{\lambda}\right|\le\left|1-\frac{x}{nN!}\right|^{\rho N!}.$$
The factors for $n=-1,0,k$ need to be treated separately. Recall Euler's sine product formula (\cite{C15}):
$$\frac{\sin z}{z}=\lim_{A\to\infty}\prod_{0<|n|<A}\left(1-\frac{z}{n\pi}\right)$$
Using this it is easy to see that $|f(x)|$ is bounded by
\begin{align*}
&\prod_{0\ne\lambda\in L_{-1}\cup L_0\cup L_k}\left|1-\frac{x}{\lambda}\right|\cdot\lim_{A\to\infty}\prod_{|n|<A,n\ne0,k,k+1}\left|1-\frac{x}{nN!}\right|^{\rho N!}\\
=&\prod_{0\ne\lambda\in L_{-1}\cup L_0\cup L_k}\left|1-\frac{x}{\lambda}\right|\cdot\left|\frac{\sin\frac{\pi x}{N!}}{\frac{\pi x}{N!}\left(1-\frac{x}{kN!}\right)\left(1-\frac{x}{(k+1)N!}\right)}\right|^{\rho N!}.
\end{align*}
The first factor is easy to estimate:
$$\prod_{0\neq \lambda\in L_{-1}\cup L_0\cup L_k}\left|1-\frac{x}{\lambda}\right|\lesssim(1+x)^{3\rho N!}.$$
Set $t=x/N!$,
$$\frac{\sin\frac{\pi x}{N!}}{\frac{\pi x}{N!}\left(1-\frac{x}{kN!}\right)\left(1-\frac{x}{(k+1)N!}\right)}=\frac{k(k+1)\sin\pi t}{\pi t(k-t)(k+1-t)}.$$
By the mean value theorem,
$$\left|\frac{\sin\pi t}{t}\right|\le\pi,\quad\left|\frac{\sin\pi t}{k-t}\right|\le\pi,\quad\left|\frac{\sin\pi t}{k+1-t}\right|\le\pi.$$
Thus,
$$\left|\frac{k(k+1)\sin\pi t}{\pi t(k-t)(k+1-t)}\right|\lesssim k(k+1)\lesssim(1+x)^2.$$
Therefore
$$|f(x)|\lesssim(1+x)^{5\rho N!}.$$
The case $x<0$ is similar so we get
$$|f(x)|\lesssim(1+|x|)^{5\rho N!}.$$
We now turn to estimating $|f(yi)|$ for $y\in \mathbb{R}\setminus\{0\}$. For $r>0$ we set
$$n(r)=\#(L^*(\rho)\cap(-r,r)).$$
We have
$$n(r)< 2\rho r,$$
Note that for $0<r\leq \frac{1}{\rho}$, one has $n(r)=0$. Since $$|f(yi)|^2=\prod_{\lambda\in L^*(\rho)}(1+y^2/\lambda^2),$$
As $n(r)$ is monotonic increasing, we may use the Riemann–Stieltjes integral to write:
$$\log|f(yi)|=\frac{1}{2}\sum_{\lambda\in L^*(\rho)}\log\left(1+\frac{y^2}{\lambda^2}\right)=\frac{1}{2}\int_{\frac{1}{\rho}}^\infty\log\left(1+\frac{y^2}{r^2}\right)dn(r).$$
Using integration by parts for the Riemann–Stieltjes integral (\cite[Theorem 12.14]{G94}), we see that for all $R\geq \frac{1}{\rho}$ it holds:
$$\frac{1}{2}\int_{\frac{1}{\rho}}^{R}\log\left(1+\frac{y^2}{r^2}\right)dn(r)=\frac{1}{2}\left( \log\left(1+\frac{y^2}{r^2}\right)n(r)\bigg|_{\frac{1}{\rho}}^{R}-\int_{\frac{1}{\rho}}^{R}n(r)d\,\log\left(1+\frac{y^2}{r^2}\right)\right).$$
Taking $R\rightarrow \infty$, we conclude:
$$\log|f(yi)|=y^2\int_{\frac{1}{\rho}}^\infty \frac{n(r)}{r(r^2+y^2)}dr$$
Since $n(r)\le 2\rho r$, we deduce
$$\log|f(yi)|\le 2\rho y^2\int_{\frac{1}{\rho}}^\infty\frac{dr}{r^2+y^2}.$$
It is a standard exercise to show:
$$\int_{0}^\infty\frac{dr}{r^2+y^2}=\frac{1}{|y|}\int_{0}^\infty\frac{dr}{1+r^2}=\frac{\pi}{2|y|}.$$
It follows that
$$|f(yi)|\leq e^{\pi\rho |y|}.$$
Finally we show that $|f(z)|$ grows at most exponentially. Let $z=x+yi$. We may assume $x,y>0$, as all the other cases are similar. Let $k$ be the integer with $kN!\le x<(k+1)N!$. Set
$$L'=L(\rho)\setminus(L_{k-1}\cup L_k\cup L_{k+1}).$$
We estimate
$$\prod_{0\ne\lambda\in L_{k-1}\cup L_k\cup L_{k+1}}\left|1-\frac{z}{\lambda}\right|\lesssim(1+|z|)^{3\rho N!}.$$$$\lim_{A\to\infty}\prod_{0\ne\lambda\in L',|\lambda|<A}\left|1-\frac{z}{\lambda}\right|^2=\lim_{A\to\infty}\prod_{0\ne\lambda\in L',|\lambda|<A}\left\{\left(1-\frac{x}{\lambda}\right)^2+\frac{y^2}{\lambda^2}\right\}$$$$=\left\{\lim_{A\to\infty}\prod_{0\ne\lambda\in L',|\lambda|<A}\left(1-\frac{x}{\lambda}\right)^2\right\}\cdot\prod_{0\ne\lambda\in L'}\left\{1+\frac{y^2}{(\lambda-x)^2}\right\}.$$
As in the proof of $|f(x)|\lesssim(1+|x|)^{5\rho N!}$ we estimate
$$\lim_{A\to\infty}\prod_{0\ne\lambda\in L',|\lambda|<A}\left(1-\frac{x}{\lambda}\right)^2\lesssim(1+x)^{12\rho N!}.$$
As in $|f(yi)|\leq e^{\pi\rho|y|}$,
$$\prod_{0\ne\lambda\in L'}\left\{1+\frac{y^2}{(\lambda-x)^2}\right\}\leq e^{2\pi\rho|y|}.$$
Thus, we deduce that $|f(z)|$ grows at most exponentially.

We have thus shown that $f(z)$ has exponential type and satisfies $|f(x)|\lesssim(1+|x|)^{5\rho N!}$ and $|f(yi)|\leq e^{\pi\rho|y|}$. By the Phragm\'en--Lindel\"of principle of Theorem \ref{thm:pl} (e.g. in the first quadrant $x,y\ge0$) applied  to $(1+z)^{-5\rho N!}e^{\pi\rho iz}f(z)$), the claim follows.
\end{proof}

Next we construct an interpolation function based on \cite[pp. 351--365]{Beurling}:
\begin{Proposition}\label{prop:interpolation}
Let $a<b$. Let $\rho>0$ with $\rho\in\mathbb Q$ and $\rho<b-a$. There exists
$\varphi\in V[a,b]$ rapidly decreasing so that $\varphi(0)=1$ and for all $\lambda\in L^*(\rho)$,
$\varphi(\lambda)=0$.
\end{Proposition}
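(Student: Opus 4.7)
The plan is to start from the entire function $f$ provided by Lemma~\ref{lem:prod_function}, which already has the correct zero configuration, and then massage it until it lands in $V[a,b]$ as a rapidly decreasing function. This will be done in two steps: first, multiply $f$ by a Schwartz function $\psi$ with $\mathcal{F}(\psi)$ supported in a small symmetric interval $[-\eta,\eta]$, in order to kill the polynomial growth of $f$ on $\mathbb{R}$ while only slightly widening the Fourier support; second, apply a modulation $e^{2\pi ict}$ in order to shift the resulting symmetric Fourier support into the asymmetric target interval $[a,b]$.

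Concretely, I would fix $N\in\mathbb{N}$ with $\rho N!\in\mathbb{N}$ (which exists since $\rho$ is a positive rational) and take $f$ from Lemma~\ref{lem:prod_function}: it satisfies $f(0)=1$, $f(\lambda)=0$ for every $\lambda\in L^*(\rho)$, and $|f(z)|\lesssim(1+|z|)^{5\rho N!}e^{\pi\rho|y|}$ for $z=x+iy\in\mathbb{C}$. Using the strict inequality $\rho<b-a$, I pick $\eta>0$ with $\rho+2\eta\le b-a$, choose a bump $h\in C_c^\infty(\mathbb{R})$ with $\supp h\subset[-\eta,\eta]$ and $\int_{\mathbb{R}}h(\xi)\,d\xi=1$, and set $\psi=\overline{\mathcal{F}}(h)\in\mathcal{S}$. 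Then $\psi(0)=1$, $\mathcal{F}(\psi)=h$ is supported in $[-\eta,\eta]$, and a direct estimate of $\psi(z)=\int h(\xi)e^{2\pi iz\xi}\,d\xi$ yields $|\psi(z)|\le\|h\|_{L^1}e^{2\pi\eta|y|}$ on $\mathbb{C}$. Finally I put $c=(a+b)/2$ and define $\varphi(t)=e^{2\pi ict}f(t)\psi(t)$.

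The interpolation properties are then automatic: $\varphi(0)=f(0)\psi(0)=1$ and $\varphi(\lambda)=e^{2\pi ic\lambda}f(\lambda)\psi(\lambda)=0$ for every $\lambda\in L^*(\rho)$ because $f$ vanishes there. Rapid decrease on $\mathbb{R}$ holds since $|f|$ has polynomial growth while $\psi$ is Schwartz, and $|e^{2\pi ict}|=1$. For the Fourier support, combining the two growth bounds gives $|f(z)\psi(z)|\lesssim(1+|z|)^{5\rho N!}e^{2\pi(\rho/2+\eta)|y|}$ on $\mathbb{C}$, with $f\psi$ bounded on $\mathbb{R}$; Theorem~\ref{thm:gpw} then places $f\psi\in V[-\rho/2-\eta,\rho/2+\eta]$. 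The modulation identity $\mathcal{F}(e^{2\pi ict}g)(\xi)=\mathcal{F}(g)(\xi-c)$ finally yields $\supp\mathcal{F}(\varphi)\subset[c-\rho/2-\eta,c+\rho/2+\eta]\subset[a,b]$, so $\varphi\in V[a,b]$.

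The serious analytic work has already been carried out in Lemma~\ref{lem:prod_function}; the rest is bookkeeping among three scales---the exponential type $\pi\rho$ of $f$, the Fourier width $\eta$ of the mollifier $\psi$, and the modulation shift $c$. The only genuinely substantive point is the arithmetic observation that the strict hypothesis $\rho<b-a$ is exactly what permits choosing $\eta>0$ with $\rho+2\eta\le b-a$: this single inequality is what simultaneously allows $\psi$ to tame the polynomial growth of $f$ on $\mathbb{R}$ while keeping $\supp\mathcal{F}(\varphi)$ inside $[a,b]$ after the shift by $c=(a+b)/2$.
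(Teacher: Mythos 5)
Your proof is correct and follows essentially the same route as the paper's: take $f$ from Lemma~\ref{lem:prod_function}, multiply by the (inverse Fourier transform of a) compactly supported bump to damp the polynomial growth while widening the exponential type from $\pi\rho$ to $\pi(\rho+\tau)$, invoke Theorem~\ref{thm:gpw} to get Fourier support in the symmetric interval $[-\tfrac{\rho+\tau}{2},\tfrac{\rho+\tau}{2}]$, and then modulate by $e^{\pi i(a+b)t}$ to shift it into $[a,b]$. The only differences are cosmetic (you call the bump $h$ and its inverse transform $\psi$, where the paper uses the opposite labels, and you write $\eta$ for the paper's $\tau/2$).
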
\begin{proof}
Fix $\tau>0$ so that $\rho+\tau<b-a$. Let $\psi(\xi)\in \mathcal{S}$ be a nonnegative smooth function in $\mathbb{R}$ satisfying
\[  \supp(\psi)\subset \left[-\frac{\tau}{2}, \frac{\tau}{2}\right], \quad \int_{-\infty}^\infty \psi(\xi)d\xi = 1.\]
Define the function $h:\mathbb{C}\rightarrow \mathbb{C}$ by $$h(z)=\int_{-\frac{\tau}{2}}^{\frac{\tau}{2}} \psi(\xi)e^{2\pi i z \xi}d\xi.$$
It is easy to see that $h$ is  an entire function which satisfies:
\begin{equation}\label{eq:h}
h_{|\mathbb{R}}=\overline{\mathcal{F}}(\psi)\in \mathcal{S}, \quad h(0)=1,\quad
\left|h(x+yi)\right|\le e^{\pi\tau|y|},
\quad\forall x,y\in\mathbb{R}.
\end{equation} 
Let $$g(z)=\lim_{A\to\infty}  \prod_{\lambda\in L(\rho), 0<|\lambda|<A}\left(1-\frac{t}{\lambda}\right).$$
By Lemma \ref{lem:prod_function}, $g(z)$ is an entire function. Thus we may define the following entire functions:
$$\tilde{\varphi}(z)=h(z)g(z),\,\, \varphi(z)=e^{\pi iz(a+b)}\tilde{\varphi}(z).$$
It is easy to see that $\varphi(0)=1$ and for all $\lambda\in L^*(\rho)$, $\varphi(\lambda)=0$. By Lemma \ref{lem:prod_function}, $g_{|\mathbb{R}}$ has polynomial growth. Therefore as $\overline{\mathcal{F}}(\psi)$ is rapidly decreasing, so are $\varphi_{|\mathbb{R}}$ and $\tilde{\varphi}_{|\mathbb{R}}$. By Lemma \ref{lem:prod_function} and (\ref{eq:h}) (recall the convention $z=x+iy$):
$$|\tilde{\varphi}(z)|\lesssim(1+|z|)^{5\rho N!}\cdot e^{\pi(\rho+\tau)|y|}$$
As in addition $\tilde{\varphi}_{|\mathbb{R}}$ is bounded (as it is rapidly decreasing), it follows from Theorem \ref{thm:gpw} that 
 $\tilde{\varphi}\in V[\frac{-\rho-\tau}{2},\frac{\rho+\tau}{2}]\subset V[\frac{a-b}{2},\frac{b-a}{2}]$. This immediately implies $\varphi\in V[a,b]$ which finishes the proof.
\end{proof}

Now we are ready to prove Proposition \ref{ppppp}.\begin{proof}[Proof of Proposition \ref{ppppp}]
We take $\delta>0$ and $f\in C_\mathbb{R}(X,B_1(V[a,b]))$.
Without loss of generality,
we assume that $|f(x)(t)|\le1-\delta$ for all $x\in X$ and $t\in\mathbb{R}$
(by replacing $f$ with $(1-\delta)f$ if necessary).
Fix $\rho\in\mathbb Q$ with
$$\mathrm{mdim}(X,\mathbb R)<\rho<b-a.$$ Let $\varphi$ be the function constructed in Proposition \ref{prop:interpolation}. As $\varphi$ is a rapidly decreasing
function, we may find $K>0$ such that:
\begin{equation}\label{eq75}
|\varphi(t)|\le\frac{K}{1+|t|^2}.
\end{equation}Let $\delta'>0$ be such that:
\begin{equation}\label{eq71}
\delta'\cdot\sum_{\lambda\in L(\rho)}\frac{K}{1+|t-\lambda|^2}<\delta
\text{ for all }t\in\mathbb{R}.
\end{equation}Fix $\epsilon\in(0,\delta)$. Let $N\in\mathbb N$ be such that
$\rho N!\in\mathbb N$, $\mathrm{Widim}_\epsilon(X,d_{N!})<\rho N!$, and such that
\begin{equation}\label{eq72}
d_{N!}(x,y)<\epsilon\text{ implies }
|f(x)(t)-f(y)(t)|<\frac{\delta'}{2}
\text{ for all }t\in[0,N!].
\end{equation}Define:
$$F:X\to[0,1]^{2\rho N!}=([0,1]^{2})^{\rho N!},\;\;\;
F(x)=(\text{Re}f(x)|_{L(\rho,N)},\text{Im}f(x)|_{L(\rho,N)}).$$
$$F^{\mathbb{C}}:X\to\mathbb{C}^{\rho N!},\;\;\; F^{\mathbb{C}}(x)=f(x)|_{L(\rho,N)}.$$
 Let $M=2\rho N!$, $d'=d_{N!}$.  Equation \eqref{eq72} implies that Equation \eqref{eq:lemma42} holds, so Lemma \ref{11111} implies,
there is an $(d_{N!},\epsilon)$-embedding $G:X\to[-1,1]^{2\rho N!}$ such that $\sup_{x\in X}||F(x)-G(x)||_{\infty}<\frac{\delta'}{2}$.
Similarly to $F^{\mathbb{C}}(x)(k)$, we introduce the notation $G^{\mathbb{C}}(x)(k)$, $k=0,\ldots,\rho N!-1$ in the natural way. Notice it holds:
\begin{equation}\label{deltaprime}
\sup_{x\in X}||F^{\mathbb{C}}(x)-G^{\mathbb{C}}(x)||_{\infty}<\delta'.\end{equation}
Take $x\in X$. Denote $\Phi(x)=(\Phi(x)_n)_{n\in\mathbb N}$,
where $\Phi(x)_n\in S_{n!}$.
For every $n\in\mathbb Z$
let $$\Lambda(x,n)=nN!-\Phi(x)_N+L(\rho,N),$$
$$\Lambda(x)=\bigcup_{n\in\mathbb{Z}}\Lambda(x,n)\subset\mathbb R.$$
\begin{center}\includegraphics[width=10.5cm]{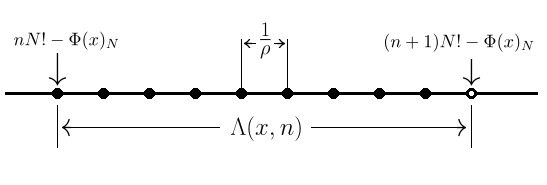}\\Figure 6.1. The set $\Lambda(x,n)$.\end{center}
Next we construct a perturbation $g$ of $f$:
$$g(x)(t)=f(x)(t)+h(x)(t),$$
where $h(x)(t)$ is defined by
$$
\sum_{n\in\mathbb Z}
\sum_{k=0}^{\rho N!-1}
\big(G^{\mathbb{C}}(T^{nN!-\Phi(x)_N}x)(k)-F^{\mathbb{C}}(T^{nN!-\Phi(x)_N}x)(k)\big)
\varphi(t-(\frac k\rho+nN!-\Phi(x)_N))
.$$
As $\varphi$ is rapidly decreasing the sum defining $g(x)$ for fixed $x$ converges in the compact open topology to a function in  $V[a,b]$. Moreover the mapping $x\mapsto g(x)$ is continuous. In order to see that $g(x)$ is $\mathbb{R}$-equivariant, it suffices to deal with $h(x)$ (because $f$ is already $\mathbb{R}$-equivariant). To see that $h(x)$ is $\mathbb{R}$-equivariant we first note that for $0\leq r<N!-\Phi(x)_N$ we have $\Phi(T^rx)_N=\Phi(x)_N+r$ and hence from the definition of $h$ it follows that $h(T^rx)(t)=h(x)(t+r)$. Similarly, if $N!-\Phi(x)_N\leq r<N!$ then $\Phi(T^rx)_N=r-(N!-\Phi(x)_N)$ and hence $(T^{nN!-\Phi(x)_N}T^rx)(k)=(T^{(n+1)N!-\Phi(x)_N-r}T^rx)(k)$. Using such information in each summand in the sum over $k$'s appearing in the definition of $h(T^rx)(t)$, and then substituting $n+1$ by $n$ when summing over $n\in\mathbb{Z}$, we get as desired $h(T^rx)(t)=h(x)(t+r)$ for $r$'s in this range. If $r=sN!$ where $s\in\mathbb{Z}$ then $\Phi(T^rx)_N=r-(sN!-\Phi(x)_N)$ and hence $(T^{nN!-\Phi(T^rx)_N}T^rx)(k)=(T^{(n+s)N!-\Phi(x)_N}x)(k)$. Using this information in each summand in the sum over $k$'s appearing in the definition of $h(T^rx)(t)$, and substituting $n+s$ by $n$ when summing over $n\in\mathbb{Z}$, we obtain as desired $h(T^rx)(t)=h(x)(t+r)$ for $r$'s in this range. Finally if $r=sN!+r'$ where $s\in\mathbb{Z}$ and $0<r'<N!$ we use the additivity properties of the terms involved in order to combine the two cases and get the desired result.
Note that by Equations \eqref{eq75} and \eqref{eq71}  for all $x\in X$ and $t\in \mathbb{R}$:
$$\sum_{n\in\mathbb Z}
\sum_{k=0}^{\rho N!-1}
\varphi(t-(\frac k\rho+nN!-\Phi(x)_N))<\frac{\delta}{\delta'}.$$
By Equation \eqref{deltaprime} for all $x\in X$, $k=0,\ldots,\rho N!-1$:
$$|G^{\mathbb{C}}(T^{nN!-\Phi(x)_N}x)(k)-F^{\mathbb{C}}(T^{nN!-\Phi(x)_N}x)(k)|<\delta'.$$
Combining the two last inequalities we have
$|g(x)(t)-f(x)(t)|<\delta$
for all  $x\in X$ and $t\in\mathbb{R}$.
Since $|f(x)(t)|\le1-\delta$,
we have $g(x)\in B_1(V[a,b])$.
Thus,
$g\in C_\mathbb{R}(X,B_1(V[a,b]))$.
It remains to check that the map
$$(g,\Phi):X\to B_1(V[a,b])\times S,\;\;\;x\mapsto(g(x),\Phi(x))$$
is a $\delta$-embedding with respect to $d$.
We take $x,x'\in X$ with $(g(x),\Phi(x))=(g(x'),\Phi(x'))$. We calculate for $k=0,\ldots,\rho N!-1$:
$$g(x)(-\Phi(x)_N+\frac k\rho)=f(x)(-\Phi(x)_N+\frac k\rho)+
\big(G^{\mathbb{C}}(T^{-\Phi(x)_N}x)(k)-F^{\mathbb{C}}(T^{-\Phi(x)_N}x)(k)\big)
.$$
As $F^{\mathbb{C}}(T^{-\Phi(x)_N}x)(k) =f(T^{-\Phi(x)_N}x)(\frac k\rho)=f(x)(-\Phi(x)_N+\frac k\rho)$, we conclude for $k=0,\ldots,\rho N!~-~1$ that $g(x)(-\Phi(x)_N+\frac k\rho)=G^{\mathbb{C}}(T^{-\Phi(x)_N}x)(k)$. Similarly $g(x')(-\Phi(x')_N+\frac k\rho)=G^{\mathbb{C}}(T^{-\Phi(x')_N}x')(k)$. Thus:
$$g(x)(-\Phi(x)_N+\frac k\rho)=g(x')(-\Phi(x)_N+\frac k\rho)=g(x')(-\Phi(x')_N+\frac k\rho)$$
implies
$$G^{\mathbb{C}}(T^{-\Phi(x)_N}x)(k)=G^{\mathbb{C}}(T^{-\Phi(x')_N}x')(k)=G^{\mathbb{C}}(T^{-\Phi(x)_N}x')(k).$$
Since
$G^{\mathbb{C}}:X\to[0,1]^{\rho N!}$
is an $(d_{N!},\epsilon)$-embedding,
we have
$$d_{N!}(T^{-\Phi(x)_N}x,T^{-\Phi(x)_N}x')<\epsilon<\delta$$ which implies $d(x,x')<\epsilon<\delta$.
This ends the proof.
\end{proof}

\medskip

\bigskip


\begin{thebibliography}{9999999}

\bibitem[Aus88]{A} J. Auslander, {\it Minimal flows and their extensions}, North-Holland, Amsterdam, Elsevier, 1988.

\bibitem[Beu89]{Beurling}
Collected works of Arne Beurling, Vol. 2, Harmonic Analysis,
Edited by L. Carleson, P. Malliavan, J. Neuberger, and J. Wermer,  Birkh\"{a}user, Boston-Basel-Berlin, 1989.

\bibitem[BS02]{BS} M. Brin and G. Stuck, {\it Introduction to dynamical systems}, Cambridge University Press, 2002.

\bibitem[BW72]{BW} R. Bowen and P. Walters, {\it Expansive one-parameter flows},
Journal of Differential Equations, 12(1), 180--193, 1972.
\bibitem[Cia15]{C15} \'{O}. Ciaurri, {\it Euler's Product Expansion for the Sine: An Elementary Proof}, The American Mathematical Monthly 122(7), 693--695, 2015.
\bibitem[DM72]{DM} H. Dym and H.P. McKean, {\it Fourier series and integrals}, Academic Press, New York-London, 1972.
\bibitem[Gor94]{G94} R. Gordon   {\it The Integrals of Lebesgue, Denjoy, Perron, and Henstock}.
   No. 4. American Mathematical Soc., 1994.
  
\bibitem[Gro99]{Grom} M. Gromov, {\it Topological invariants of dynamical systems and spaces of holomorphic maps: I},
Math. Phys. Anal. Geom., 2, 323--415, 1999.

\bibitem[Gut17]{G17} Y. Gutman, {\it Embedding topological dynamical systems with periodic points in cubical shifts},
Ergod. Th. Dynam. Sys., 37, 512--538, 2017.

\bibitem[Gut11]{G11} Y. Gutman, {\it Embedding $\mathbb{Z}^k$-actions in cubical shifts and $\mathbb{Z}^k$-symbolic extensions},
Ergod. Th. Dynam. Sys., 31, 383--403, 2011.

\bibitem[Gut15]{G} Y. Gutman, {\it Mean dimension and Jaworski-type theorems},
Proceedings of the London Mathematical Society, 111(4), 831--850, 2015.

\bibitem[Gut16]{G16} Y. Gutman, {\it Takens embedding theorem with a continuous observable},
In Ergodic Theory: Advances in Dynamical Systems, Walter de Gruyter GmbH \& Co KG, 134--142, 2016.

\bibitem[GLT16]{GLT} Y. Gutman, E. Lindenstrauss and M. Tsukamoto,
{\it Mean dimension of $\mathbb{Z}^k$-actions}, Geom. Funct. Anal., 26(3), 778--817, 2016.

\bibitem[GQS18]{GQS} Y. Gutman, Y. Qiao and G. Szab\'{o},
{\it The embedding problem in topological dynamics and Takens' theorem},
Nonlinearity, 31, 597--620, 2018.

\bibitem[GQT19]{GQT} Y. Gutman, Y. Qiao and M. Tsukamoto,
{\it Application of signal analysis to the embedding problem of $\mathbb{Z}^k$-actions},
Geom. Funct. Anal., to appear, 2019.


\bibitem[GT14]{GT14} Y. Gutman and M. Tsukamoto, {\it Mean dimension and a sharp embedding theorem: extensions of aperiodic subshifts},
Ergod. Th. Dynam. Sys., 34, 1888--1896, 2014.

\bibitem[HK03]{HK} B. Hasselblatt and A. Katok, {\it A first course in dynamics: with a panorama of recent developments},
Cambridge University Press, 2003.

\bibitem[Jaw74]{J} A. Jaworski, {\it Ph.D. Thesis}, University of Maryland, 1974.

\bibitem[Kak68]{K} S. Kakutani, {\it A proof of Beboutov's theorem},
Journal of Differential Equations, 4(2), 194--201, 1968.

\bibitem[Kno51]{kno51} K. Knopp, {\it Theory and application of infinite series},
Blackie and Son Ltd. London and Glasgow, 1951.

\bibitem[Lin99]{L} E. Lindenstrauss, {\it Mean dimension, small entropy factors and an embedding theorem},
Inst. Hautes \'{E}tudes Sci. Publ. Math., 89, 227--262, 1999.

\bibitem[Man72]{Mandelbrojt72} S. Mandelbrojt, {\it Dirichlet series, Principles and methods},
D. Reidel Publishing Co., Dordrecht, 1972, x+166.

\bibitem[Mun00]{M} J. Munkres, {\it Topology}, 2nd edition, Prentice Hall, Upper Saddle River, NJ, 2000.

\bibitem[Tsu]{TsuPer} M. Tsukamoto, {\it Personal Communications}.

\bibitem[LT14]{LT} E. Lindenstrauss and M. Tsukamoto, {\it Mean dimension and an embedding problem: an example},
Israel J. Math., 199, 573--584, 2014.

\bibitem[LW00]{LW} E. Lindenstrauss and B. Weiss, {\it Mean topological dimension},
Israel J. Math., 115, 1--24, 2000.

\bibitem[NS60]{NS} V.V. Nemyckij and V.V. Stepanov, {\it Qualitative theory of differential equations},
Princeton University Press, 1960.

\bibitem[Rud87]{Rud87} W. Rudin, {\it Real and complex analysis}, McGraw-Hill Book Co., 1987.

\bibitem[Sch66]{Schwartz} L. Schwartz, {\it Th\'{e}orie des distributions}, Hermann, Paris, 1966.

\bibitem[Str03]{Strichartz} R.S. Strichartz, {\it A guide to distribution theory and Fourier transforms}, World Scientific Publishing Company, 2003.
\end{thebibliography}
\end{document}